\numberwithin{equation}{section}
\newtheorem{thm}{Theorem}[section]
\newtheorem{defn}{Definition}[section]
\newtheorem{prop}[thm]{Proposition}
\newtheorem{cor}[thm]{Corollary}
\newtheorem{lem}[thm]{Lemma}
\theoremstyle{remark}
\newtheorem{rmk}[thm]{Remark}
\theoremstyle{definition}
\DeclareMathOperator{\E}{\mathbb{E}}
\DeclareMathOperator{\N}{\mathbb{N}}
\DeclareMathOperator{\cN}{\mathcal{N}}
\DeclareMathOperator{\R}{\mathbb{R}}
\DeclareMathOperator{\cF}{\mathcal{F}}
\DeclareMathOperator{\cZ}{\mathcal{Z}}
\DeclareMathOperator{\cL}{\mathcal{L}}
\DeclareMathOperator{\cI}{\mathcal{I}}
\DeclareMathOperator{\cB}{\mathcal{B}}
\DeclareMathOperator{\cS}{\mathcal{S}}
\DeclareMathOperator{\fS}{\mathfrak{S}}
\DeclareMathOperator{\bP}{\mathbb{P}}
\newcommand{\pd}[2]{\frac{\partial #1}{\partial #2}}
\newcommand{\pdsup}[3]{\frac{\partial^{#3} #1}{\partial #2^{#3}}}
\newcommand{\der}[2]{\frac{d #1}{d #2}}
\newcommand{\dersup}[3]{\frac{d^{#3} #1}{d #2^{#3}}}
\newcommand{\Norm}[2]{\left\Vert #1 \right\Vert_{#2}}
\title[Deterministic control of SDEs with stochastic drift and multiplicative noise]{Deterministic control of SDEs with stochastic drift and multiplicative noise: a variational approach}
\author{Giacomo Ascione$^\ast$}
\address{$^\ast$ Dipartimento di Matematica e Applicazioni ``Renato Caccioppoli'', Università degli Studi di Napoli Federico II, 80126 Napoli, Italy}
\author{Giuseppe D'Onofrio$^\dagger$}
\address{$^\dagger$ Dipartimento di Matematica ``G. Peano'', Università degli Studi di Torino, Via Carlo Alberto 10, 10123 Torino, Italy}
\email{giacomo.ascione@unina.it \\
 giuseppe.donofrio@unito.it
	}
\begin{document}
\subjclass[2020]{49J55, 60H10}
\keywords{Stochastic Differential Equation; Euler-Lagrange equation; Geometric Brownian motion}
\maketitle
\begin{abstract}
We consider a linear stochastic differential equation with stochastic drift and multiplicative noise. We study the problem of approximating its solution with the process that solves the equation where the possibly stochastic drift is replaced by a deterministic function. To do this, we use a combination of deterministic Pontryagin’s maximum principle approach and direct methods of calculus of variations. We find necessary and sufficient conditions for a function $u \in L^1(0, T)$ to be a minimizer of a certain cost functional. To overcome the problem of the existence of such minimizer, we also consider suitable families of penalized coercive cost functionals. Finally, we consider the important example of the quadratic cost functional, showing that the expected value of the drift component is not always the best choice in the mean squared error approximation.
\end{abstract}
\keywords{}
\section{Introduction}

Optimal control of dynamical systems consists in the optimization, via a suitable control, of certain measures of performance of the system.
Precisely, assuming that the state of the system is described by a differential equation, we want to {\it  modifiy} the equation with a function (called control) belonging to a suitable class
in order to minimize a certain functional depending on both the controlled state of the system and the control itself.
In the context of stochastic calculus 
this problem extends naturally to the case in which the system is described through a controlled  stochastic differential equation (SDE).
Historically, the latter is addressed by two main theoretical approaches that have been developed starting from Bellman's and Pontryagin's optimality principles
(see for instance the comprehensive survey by Pham \cite{pham_survey}).
The first one is  called the dynamic
programming principle, based on Bellman's optimality principle \cite{bellman}: it consists in defining a dynamic value function by using the cost functional and then trying to describe it via partial differential equations (PDEs).
This method relies on a class of nonlinear PDEs called Hamilton-Jacobi-Bellmann equations \cite{Kalman,lions1983}.
Let us emphasize that one can also adapt the latter to more {\it complex} situation (e.g. \cite{bayraktar2018randomized}).
The second approach, instead,  is based  on a stochastic generalization of Pontryagin's maximum principle \cite{peng}. 
While the deterministic version can be expressed, in some suitable cases, via a forward-backward differential system, the stochastic one led to the definition of  backward stochastic differential equations (BSDEs) \cite{pardouxpeng}.
Let us also stress that the stochastic maximum principle usually works with second variations (while the deterministic one only with first) due to the presence of the white noise. 
This branch of control theory considerably developed over the last
years \cite{Andersson,Bonnans2012FirstAS,dipersio,Fuhrman2013StochasticMP,
Fuhrman2016StochasticMP,MenoukeuPamen2021MaximumPF,orrieri,
STECHA20114714}.
Here we want to address an approximation problem concerning a linear SDE.  Indeed, the tools coming from optimal control theory have been already used to approach some approximation problems. This is done, for instance, in \cite{davis} where a stochastic control problem is approximated by a sequence of deterministic control problems, obtaining a Wong-Zakai like (\cite{wongzakai}) convergence result. Actually we are interested in approximating an SDE admitting a stochastic drift with another one in which such drift is replaced by a deterministic one. 

More precisely, in this paper we consider the following type of linear SDEs
\begin{equation}\label{eq:linSDE_intro}
	\begin{cases}
		dX(t)=[a(t)X(t)+z(t)]dt+X(t)dW(t), & t \in [0,T]\\
		X(0)=X_0,
	\end{cases}
\end{equation}
with multiplicative noise and where $z(t)$, appearing in the drift term, is a suitable stochastic process.
This kind of equations arises in many applications ranging from
finance \cite{oksendal2000} to
neuronal modeling \cite{tanre,delarue} or quickest detection \cite{JOHNSON2021}.
Moreover, if $z(t)$ is itself the solution of an SDE, Eq.\eqref{eq:linSDE_intro} plays a role in many systems of equations used in epidemiology, climate models, game theory and others \cite{arnold1998random,bhattacharya2007random,flandoli}.

We are interested in finding the {\it best} approximation for a solution of
Eq.\eqref{eq:linSDE_intro} obtained by substituting the possibly stochastic drift with a deterministic function. 
A measure of goodness of the approximation is expressed via the cost functional 
\begin{equation}
	J: u \in L^1(0,T) \to \E\left[\int_0^T F(t,|X(t)-X_u(t)|)dt\right],
\end{equation}
where $F$ is a suitably regular function depending on the distance between $X(t)$ and $X_u(t)$, that is the solution of Eq.\eqref{eq:linSDE_intro} where we replace $z$ by $u$.
Let us underline that the  Lagrangian function $F$ does not depend directly on $u$. Usually, this could lead to a trivial solution of a control problem. Triviality is avoided since we are constraining $u$ to  be deterministic.
Our aim is to find, if it exists, a function $\overline{u} \in L^1(0,T)$ that minimizes $J$. In the  literature, to the best of our knowledge, few contributions on purely deterministic controls of stochastic equations are available \cite{ascione2020optimal,stannat}.

In \cite{ascione2020optimal} we considered the problem of
approximating the solution of an SDE with stochastic drift and additive noise through an Ornstein–Uhlenbeck type process, by using direct
methods of calculus of variations. Conditions 
for existence and uniqueness of the approximation and 
bounds on the goodness of the corresponding approximations are given for some examples. 
However, in that work, the presence of just additive noise allowed us to reformulate the problem on the class of absolutely continuous functions and led to a purely deterministic treatment. The multiplicative noise, on the other hand,  requires a different approach.

Here we find necessary and sufficient conditions for a function $u$ in $L^1(0,T)$ to be a minimizer of $J$, while we are not able to prove the existence of such a solution in a general setting. To overcome this problem we consider suitable families of penalized cost functionals and we prove that  they always admit minimizers. With this property in mind we are able to exploit a sufficient (and necessary) condition for the existence of a solution of the original problem. If the latter condition is not clearly satisfied, then, in any case, the original cost functional evaluated in the solutions of the penalized problems converges towards its infimum as the penalization constant goes to zero. On the other hand, if the condition is satisfied,  we can guarantee only weak $L^1$ convergence of the penalized solution towards the actual solution, but under further regularity assumptions we still have convergence in distribution of the corresponding approximated processes.
In the overall, the method we present here can be considered as a combination of deterministic Pontryagin's maximum principle approach and direct methods of calculus of variations.

The paper is structured as follows: in Section 2 we first show some basic properties of the solution of Eq.\eqref{eq:linSDE_intro} and then we introduce the approximation problem. Section 3 is devoted to obtaining the Euler-Lagrange equation of the functional; i.e we give necessary conditions for a function $u$ to be a minimizer of $J$. In Section 4 we prove that, under suitable convexity assumptions, the aforementioned Euler-Lagrange equation is also a sufficient condition. In Section 5 we study the penalized problems and we address the problem of existence of a solution and convergence of the penalized solutions to the actual one. Finally, in Section 6, we consider the important example of the quadratic cost functional. While on one hand we are able to show that if   $z$ is independent of $W$ a solution exists and it is trivially the expected value of $z$, on the other hand we also provide an example in which it is {\bf not} a minimizer for the quadratic cost functional. This result can be reformulated saying that, in the multiplicative noise case, the expected value of $z$ is not always the best choice in the mean squared error approximation.
Due to the non-trivial nature of the Euler-Lagrange equation, all the examples provided in the section have been obtained by using numerical methods for solution of integral equations via MATLAB R2021a \cite{matlab}.

\section{The linear equation with multiplicative noise and the approximation problem}
\subsection{The linear equation}
Let us consider a filtered probability space $(\Omega, \cF, \bP, \cF_t)$ and a $\cF_t$-Brownian motion $\{W(t), t \ge 0 \}$. Fix $T>0$ and consider $\{z(t), t \ge 0\}$ a $\cF_t$-adapted process such that
\begin{itemize}
	\item[(\emph{H1})] There exists $p \ge 2$ such that for any fixed $t \in [0,T]$, $z(t)\in L^p(\Omega,\bP)$ and 
	\begin{equation*}
		\int_0^T\E[|z(t)|^p]^{\frac{2}{p}}dt<+\infty.
	\end{equation*}
\end{itemize}
Let us stress out that last condition implies, by H\"older inequality, that
\begin{equation*}
	\int_0^T\E[|z(t)|^2]dt\le\int_0^T\E[|z(t)|^p]^{\frac{2}{p}}dt<+\infty.
\end{equation*}
On the other hand, let us observe that, denoting $U=\{t \in [0,T]: \ \E[|z(t)|^p]\ge 1\}$ and $U^c=[0,T]\setminus U$, 
\begin{align*}
	\begin{split}
	\int_0^T\E[|z(t)|^p]^{\frac{1}{p}}dt&=\int_{U}\E[|z(t)|^p]^{\frac{1}{p}}dt+\int_{U^c}\E[|z(t)|^p]^{\frac{1}{p}}dt \\&\le \int_{U}\E[|z(t)|^p]^{\frac{2}{p}}dt+|U^c|\le \int_{0}^T\E[|z(t)|^p]^{\frac{2}{p}}dt+T<+\infty.
\end{split}
\end{align*}
Finally, let us observe that
\begin{equation*}\label{contr}
	\E\left[\int_0^T|z(t)|dt\right]=\int_0^T \E[|z(t)|]dt\le \int_0^T \E[|z(t)|^p]^{\frac{1}{p}}dt<+\infty,
\end{equation*}
hence $\int_0^T|z(t)|dt$ is $\bP$-almost surely finite and $z \in L^1(0,T)$ $\bP$-almost surely. Let us denote by $\cL^2_p(\Omega, \bP; [0,T])$ the space of $\cF_t$-adapted processes $\{z(t), \ t \ge 0\}$ satisfying (\emph{H1}). The notation is justified by the fact that (\emph{H1}) can be also written as
\begin{equation*}
	\Norm{\Norm{z(\cdot)}{L^p(\Omega,\bP)}}{L^2(0,T)}<+\infty.
\end{equation*}
Let us also consider a function $a:[0,T]\to \R$ in $L^\infty(0,T)$.
We focus on the linear SDE
\begin{equation}\label{eq:linSDE}
	\begin{cases}
		dX(t)=[a(t)X(t)+z(t)]dt+X(t)dW(t), & t \in [0,T]\\
		X(0)=X_0,
	\end{cases}
\end{equation}
where $X_0 \in L^2(\Omega, \bP)$. In particular the following result holds.
\begin{prop}
	Let $\cL^2_1([0,T]; \Omega, \bP)$ be the space of $\cF_t$-adapted processes $\{z(t), \ t \ge 0\}$ such that $z(\cdot)\in L^1(0,T)$ $\bP$-almost surely and
	\begin{equation*}
		\E\left[\left(\int_0^T|z(t)|dt\right)^2\right]<+\infty.
	\end{equation*}
	Then the map $\cS_{X_0}: \cL^2_1([0,T]; \Omega, \bP) \mapsto \cL^2_2(\Omega, \bP; [0,T])$, such that for any $z \in \cL^2_1([0,T]; \Omega, \bP)$ the process $\cS_{X_0} z$ is solution of \eqref{eq:linSDE}, is well-defined and it holds
	\begin{equation}\label{eq:solutionmap}
		\cS_{X_0} z(t)=G(t)e^{A(t)}\left(X_0+\int_0^t \frac{e^{-A(s)}}{G(s)} z(s)ds\right), \quad \forall z \in \cL^2_1([0,T]; \Omega, \bP),
	\end{equation}
where $A(t)=\int_0^t a(s)ds$ and $G(t)$ is the geometric Brownian motion associated to $W(t)$, i.e.
\begin{equation}\label{eq:geomB}
	G(t)=e^{W(t)-\frac{1}{2}t}
\end{equation}
\end{prop}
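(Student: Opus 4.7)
The plan is to derive the closed-form expression via the integrating-factor / variation-of-parameters method for linear SDEs, verify it solves \eqref{eq:linSDE} via It\^o's product rule, and then establish membership in $\cL^2_2$.

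First, I would identify the integrating factor. Applying It\^o's formula to $G(t) = e^{W(t) - t/2}$ gives $dG = G\,dW$, so $\Phi(t) := G(t) e^{A(t)}$ satisfies $d\Phi = a(t)\Phi\,dt + \Phi\,dW$ with $\Phi(0) = 1$; i.e.\ $\Phi$ is an a.s.\ strictly positive continuous semimartingale solving the homogeneous version of \eqref{eq:linSDE}. Motivated by the deterministic case I would then look for the solution in the form $X(t) = \Phi(t) Y(t)$ with $Y$ of finite variation and $Y(0) = X_0$. It\^o's product rule, whose cross-variation term vanishes because $Y$ is absolutely continuous, yields
\[
dX = \Phi\,dY + Y\,d\Phi = \Phi\,dY + a(t) X\,dt + X\,dW,
\]
and identifying with \eqref{eq:linSDE} forces $\Phi\,dY = z\,dt$, hence $Y(t) = X_0 + \int_0^t \Phi(s)^{-1} z(s)\,ds$. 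Substituting back gives the claimed formula, which at $t=0$ satisfies $\cS_{X_0}z(0) = X_0$.

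Next, I would verify that the formula is well-defined and indeed solves \eqref{eq:linSDE}. Since $z(\cdot) \in L^1(0,T)$ $\bP$-a.s.\ by the definition of $\cL^2_1$ and $\Phi^{-1}(s) = e^{-W(s) + s/2 - A(s)}$ is pathwise continuous on $[0,T]$ (hence pathwise bounded a.s.), the integrand $\Phi^{-1} z$ belongs to $L^1(0,T)$ a.s., so $\cS_{X_0}z$ is an a.s.\ continuous, $\cF_t$-adapted process (adaptedness inherited from that of $z$, $A$, and $W$). Reversing the It\^o product-rule computation above then rigorously confirms that $\cS_{X_0}z$ solves \eqref{eq:linSDE}.

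Finally, to check $\cS_{X_0}z \in \cL^2_2(\Omega, \bP; [0,T])$, i.e.\ $\int_0^T \E[|\cS_{X_0}z(t)|^2]\,dt < +\infty$, I would use $(a+b)^2 \le 2a^2 + 2b^2$ to split the square of the explicit formula into an ``$X_0$ contribution'' and a ``$z$-integral contribution''. The first is controlled using boundedness of $e^{2A(t)}$ on $[0,T]$ (since $a \in L^\infty$), the standard moment $\E[G(t)^2] = e^{t}$, and independence of $X_0 \in L^2(\Omega)$ from $W$. For the second, I would factor out the deterministic suprema $e^{\pm A(t)}$ and estimate $\E\bigl[G(t)^2 \bigl(\int_0^t G(s)^{-1}|z(s)|\,ds\bigr)^2\bigr]$ via Fubini together with finite moments of $\sup_{[0,T]} G(\cdot)^{\pm 1}$ (which has exponential moments of every order) and the $\cL^2_1$-hypothesis $\E[(\int_0^T |z|\,ds)^2] < +\infty$. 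The main technical hurdle is precisely this last step: combining the Brownian-exponential moment bounds with the $L^2(\Omega)$-control on the $L^1(0,T)$-norm of $z$ so as to obtain an integrable bound uniformly in $t \in [0,T]$, since the pointwise-in-time $L^p(\Omega)$-control on $z$ afforded by \emph{H1} is weaker under the broader $\cL^2_1$ hypothesis and a direct H\"older argument is not immediately available.
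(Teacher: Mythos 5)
The derivation of the closed form is correct and is, in substance, the paper's own computation: the paper introduces the integrating factor as the solution $Y(t)=e^{-A(t)}/G(t)$ of an auxiliary SDE and applies It\^o's product rule to $X(t)Y(t)$, which is exactly your $\Phi^{-1}$. The organisational difference is that the paper first invokes existence \emph{and uniqueness} of a strong solution in $\cL^2_2(\Omega,\bP;[0,T])$ (by ``a simple adaptation'' of \cite[Theorem 5.2.1]{oksendal2013stochastic}) and only then derives the formula for \emph{that} solution, whereas you construct a candidate by the ansatz $X=\Phi Y$ and verify it solves \eqref{eq:linSDE}. Your direction establishes existence but never addresses uniqueness, which is what makes the map $\cS_{X_0}$ well-defined; this is fixable with the same product-rule computation applied to the difference of two solutions, but it is missing as written.

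The genuine gap is the final step, membership in $\cL^2_2(\Omega,\bP;[0,T])$, which the paper obtains from the cited existence theorem and which you attempt directly. The strategy you sketch does not close under the hypothesis of $\cL^2_1([0,T];\Omega,\bP)$: after factoring out $e^{\pm A}$ and bounding $G(t)^2\bigl(\int_0^t G(s)^{-1}|z(s)|\,ds\bigr)^2$ by $C\bigl(\sup_{[0,T]}G\bigr)^2\bigl(\sup_{[0,T]}G'\bigr)^2\bigl(\int_0^T|z(s)|\,ds\bigr)^2$, the only available hypothesis is $\E\bigl[\bigl(\int_0^T|z|\,ds\bigr)^2\bigr]<+\infty$, so separating the Brownian suprema from the $z$-term by H\"older/Cauchy--Schwarz forces a moment of order strictly greater than $2$ on $\int_0^T|z|\,ds$, which is not assumed. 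The Fubini variant (expanding the square into a double integral and estimating $\E[G(t)^2G(s)^{-1}G(r)^{-1}|z(s)||z(r)|]$) runs into the same obstruction and, at best, leads to requiring $\int_0^T\E[|z(t)|^2]^{1/2}dt<+\infty$, i.e.\ $z\in\cL^1_2$, which is a strictly stronger condition than $\cL^2_1$ by Minkowski's integral inequality. Closing this step genuinely requires exploiting the adaptedness of $z$ (e.g.\ the independence of $G(t)/G(s)$ from $\cF_s$, via conditioning) or falling back on the a priori estimate built into the existence theorem, as the paper does; you correctly identify this as ``the main technical hurdle'' but leave it unresolved, so the well-definedness claim is not actually proved.
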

\begin{proof}
	By a simple adaptation of the proof of \cite[Theorem $5.2.1$]{oksendal2013stochastic}, the SDE \eqref{eq:linSDE} admits a unique strong solution in $\cL^2_2(\Omega, \bP; [0,T])$ whenever $z \in \cL^2_1([0,T]; \Omega, \bP)$.\\
	Let us prove Equation \eqref{eq:solutionmap}. To do this, let us consider the linear SDE
	\begin{equation*}
		\begin{cases}
			dY(t)=(1-a(t))Y(t)dt-Y(t)dW(t), & t \in [0,T]\\
			Y(0)=1
		\end{cases}
	\end{equation*}
	and define $Z(t)=\log(Y(t))$. By It\^{o}'s formula we have
	\begin{equation*}
		\begin{cases}
			dZ(t)=\left(\frac{1}{2}-a(t)\right)dt-dW(t), & t \in [0,T]\\
			Z(0)=0
		\end{cases}
	\end{equation*}
	and then, integrating
	\begin{equation*}
		Z(t)=\frac{1}{2}t-W(t)-\int_0^ta(s)ds, \ t \in [0,T].
	\end{equation*}
	Recalling the definition of $Z(t)$, we have
	\begin{equation}\label{eq:procY}
		Y(t)=\frac{e^{-A(t)}}{G(t)}, \ t \in [0,T].
	\end{equation}
	Let $X(t)=\cS_{X_0} z(t)$. By It\^o's formula we have
	\begin{equation*}
		d(X(t)Y(t))=Y(t)dX(t)+X(t)dY(t)-X(t)Y(t)dt=z(t)Y(t)dt.
	\end{equation*}
	Integrating the previous relation we have
	\begin{equation*}
		X(t)Y(t)=X_0+\int_0^tY(s)z(s)ds.
	\end{equation*}
		Equation \eqref{eq:procY} concludes the proof.
\end{proof}
\begin{rmk}
	Since Equation \eqref{eq:linSDE} is linear, one could explicitly write the solution in terms of $z(t)$ apparently just supposing that $z \in L^1(0,T)$ $\bP$-almost surely. However, this does not guarantee that $X(t)$ is regular enough to admit an It\^o integral, which is instead needed to express the equation itself. \\
	Let us also observe that $\cL^2_p(\Omega,\bP;[0,T]) \cup L^1(0,T) \subset \cL^2_1([0,T];\Omega, \bP)$, where with $L^1(0,T)$ we denote the space of absolutely integrable deterministic functions, considered as degenerate stochastic processes. 
\end{rmk}
As a direct consequence of the previous result we obtain the following.
\begin{cor}\label{linear}
	The solution map $\cS_{X_0}$ is affine, i.e. for any $n \in \N$, $(a_1,\dots,a_n)\in \R^n$ such that $\sum_{i=1}^{n}a_i=1$ and $z_1,\dots,z_n \in \cL_1^2([0,T];\Omega,\bP)$ it holds
	\begin{equation*}
		\cS_{X_0}\left(\sum_{i=1}^{n}a_iz_i\right)=\sum_{i=1}^{n}a_i\cS_{X_0} z_i.
	\end{equation*}
Moreover, for any $X_1,X_2 \in L^2(\Omega, \bP)$ and $z_1,z_2 \in \cL_1^2([0,T];\Omega, \bP)$ it holds
\begin{equation*}
	\cS_{X_1}z_1-\cS_{X_2}z_2=\cS_{X_1-X_2}(z_1-z_2).
\end{equation*}
Finally, $\cS_0$ is linear.
\end{cor}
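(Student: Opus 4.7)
The plan is to read everything off from the explicit solution formula \eqref{eq:solutionmap}, since each of the three claims is basically a statement about linearity of the integral and of the scalar part $X_0$.

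First I would verify the affine property. Given $z_1,\dots,z_n \in \cL_1^2([0,T];\Omega,\bP)$ and coefficients $a_1,\dots,a_n$ with $\sum_i a_i = 1$, the sum $\sum_i a_i z_i$ still lies in $\cL_1^2([0,T];\Omega,\bP)$ (the space is a vector space, as follows from the triangle inequality under the norm defining it). Applying \eqref{eq:solutionmap} to $\sum_i a_i z_i$ and using linearity of the Lebesgue integral $\int_0^t \frac{e^{-A(s)}}{G(s)}(\cdot)\, ds$ inside the parentheses gives
\begin{equation*}
\cS_{X_0}\Bigl(\sum_{i=1}^n a_i z_i\Bigr)(t) = G(t)e^{A(t)}\Bigl(X_0 + \sum_{i=1}^n a_i \int_0^t \frac{e^{-A(s)}}{G(s)}z_i(s)\, ds\Bigr).
\end{equation*}
The constraint $\sum_i a_i = 1$ is used precisely to rewrite $X_0 = \sum_i a_i X_0$, after which one factors out $a_i$ and recognizes the $i$-th summand as $\cS_{X_0} z_i(t)$.

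Next I would handle the difference identity $\cS_{X_1} z_1 - \cS_{X_2} z_2 = \cS_{X_1 - X_2}(z_1 - z_2)$ by the same strategy: write both $\cS_{X_i} z_i$ through \eqref{eq:solutionmap}, factor the common $G(t)e^{A(t)}$, and combine the $X_i$'s and the integrals separately. Here no constraint on coefficients is needed because the $X_0$ part is handled as a plain subtraction. Finally, for the linearity of $\cS_0$, set $X_0 = 0$ so that the constant $X_0$-term disappears; then for any scalars $\lambda,\mu$ and any $z_1,z_2 \in \cL_1^2([0,T];\Omega,\bP)$, \eqref{eq:solutionmap} reduces to the linear operator $z \mapsto G(t)e^{A(t)}\int_0^t \frac{e^{-A(s)}}{G(s)} z(s)\, ds$, so $\cS_0(\lambda z_1 + \mu z_2) = \lambda \cS_0 z_1 + \mu \cS_0 z_2$ is immediate.

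There is no real obstacle here: the only thing to check carefully is that each combination (sum with $\sum a_i = 1$, difference, linear combination with $X_0=0$) still belongs to the correct domain space $\cL_1^2([0,T];\Omega,\bP)$, so that \eqref{eq:solutionmap} is legitimately applicable, and that the output lies in $\cL_2^2(\Omega,\bP;[0,T])$, which is ensured by the proposition just proved. Everything else is bookkeeping on the explicit formula.
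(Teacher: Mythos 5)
Your proposal is correct and follows the same route as the paper: the paper also reads the affine identity directly off the explicit formula \eqref{eq:solutionmap}, using $\sum_i a_i = 1$ to rewrite $X_0=\sum_i a_iX_0$, and dispatches the remaining two statements as analogous computations. Your additional remark about checking that the combined processes stay in $\cL_1^2([0,T];\Omega,\bP)$ is a harmless (and reasonable) extra precaution the paper leaves implicit.
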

\begin{proof}
	Let us just observe that
	\begin{align*}
		\cS_{X_0}\left(\sum_{i=1}^{n}a_iz_i\right)(t)&=G(t)e^{A(t)}\left(X_0+\int_0^t \frac{e^{-A(s)}}{G(s)} \left(\sum_{i=1}^{n}a_iz_i(s)\right)ds\right)\\
		&=G(t)e^{A(t)}\left(\sum_{i=1}^{n}a_iX_0+\sum_{i=1}^{n}a_i\int_0^t \frac{e^{-A(s)}}{G(s)}z_i(s)ds\right)\\
		&=\sum_{i=1}^{n}a_iG(t)e^{A(t)}\left(X_0+\int_0^t \frac{e^{-A(s)}}{G(s)}z_i(s)ds\right)=\sum_{i=1}^{n}a_i\cS_{X_0} z_i(t).
	\end{align*}
	The second and third statements can be proved in an analogous way.
\end{proof}
Next, we want to underline some properties of the moments of $\cS_{X_0} z(t)$ when $z$ belongs to a certain Banach space. To do this, let us introduce the Banach space $\cL_p^1(\Omega,\bP;[0,T])$ of the $\cF_t$-adapted processes $\{z(t), \ t \ge 0\}$ such that
\begin{equation*}
	\int_0^T \E[|z(t)|^p]^{\frac{1}{p}}dt<+\infty.
\end{equation*}
Clearly, we have $\cL_p^2(\Omega, \bP;[0,T])\cup L^1(0,T)\subset \cL_p^1(\Omega,\bP;[0,T]) \cap \cL_1^2([0,T];\Omega,\bP)$.
On the other hand, let us also recall the following moment estimate for linear SDEs (see \cite[Chapter $3$, Lemma $4.2$]{yong1999stochastic}).
\begin{lem}\label{lem:momest}
	Consider $\kappa \ge 1$ and let $Y(t)$ be a strong solution of
	\begin{equation*}
		\begin{cases}
			dY(t)=[a_1(t)Y(t)+a_2(t)]dt+[b_1(t)Y(t)+b_2(t)]dW(t), &t \in [0,T]\\
			Y(0)=Y_0 
		\end{cases}
	\end{equation*}
	where $Y_0 \in L^{2\kappa}(\Omega, \bP)$, $a_1,b_1:[0,T]\to \R$ are functions in $L^\infty(0,T)$ with $M \ge \max\{\Norm{a_1}{L^\infty(0,T)},\Norm{b_1}{L^\infty(0,T)}\}$ and
	\begin{equation*}
		\int_0^T \E[|a_2(t)|^{2\kappa}]^{\frac{1}{2\kappa}}dt+\int_0^T \E[|b_2(t)|^{2\kappa}]^{\frac{1}{\kappa}}dt<+\infty.
	\end{equation*}
	Then there exists a constant $K(\kappa,M,T)>0$ such that
	\begin{equation*}
		\sup_{t \in [0,T]}\E[|Y(t)|^{2\kappa}]\le K(\kappa,M,T)\left(\E[|Y_0|^{2\kappa}]+\left(\int_0^T \E[|a_2(t)|^{2\kappa}]^{\frac{1}{2\kappa}}dt\right)^{2\kappa}+\left(\int_0^T \E[|b_2(t)|^{2\kappa}]^{\frac{1}{\kappa}}dt\right)^{\kappa}\right).
	\end{equation*}
	Moreover, for fixed $\kappa \ge 1$ and $M>0$, the function $T>0 \mapsto K(\kappa,M,T)$ is increasing.
\end{lem}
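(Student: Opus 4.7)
The plan is to write $Y(t)$ in integral form, split it into five pieces, estimate each separately, and then close the bound with Gronwall's inequality. Writing
\begin{equation*}
Y(t) = Y_0 + \int_0^t a_1(s)Y(s)ds + \int_0^t a_2(s)ds + \int_0^t b_1(s)Y(s)dW(s) + \int_0^t b_2(s)dW(s),
\end{equation*}
the elementary inequality $(x_1 + \cdots + x_5)^{2\kappa} \le 5^{2\kappa - 1}(x_1^{2\kappa} + \cdots + x_5^{2\kappa})$ reduces the problem to controlling the $2\kappa$-th moment of each of the five pieces.

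First I would handle the two drift integrals. The term with $a_1 Y$ is bounded, via H\"older's inequality in time together with $\Norm{a_1}{L^\infty(0,T)} \le M$, by a constant depending on $\kappa,M,T$ times $\int_0^t \E[|Y(s)|^{2\kappa}]ds$. The pure $a_2$ term is where the sharp integrability norm arises: Minkowski's integral inequality with exponent $2\kappa$ gives
\begin{equation*}
\E\left[\left|\int_0^t a_2(s)ds\right|^{2\kappa}\right]^{\frac{1}{2\kappa}} \le \int_0^t \E[|a_2(s)|^{2\kappa}]^{\frac{1}{2\kappa}}ds,
\end{equation*}
which matches the first integral on the right-hand side of the statement.

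Next, for the two stochastic integrals, I would invoke the Burkholder--Davis--Gundy inequality. For the $b_1 Y$ piece, BDG together with the $L^\infty$ bound on $b_1$ and H\"older's inequality in time produces another copy of $\int_0^t \E[|Y(s)|^{2\kappa}]ds$. For the $b_2$ piece, BDG yields a control by $\E[(\int_0^t b_2(s)^2 ds)^\kappa]$, and Minkowski's integral inequality with exponent $\kappa$ then dominates this by $\bigl(\int_0^t \E[|b_2(s)|^{2\kappa}]^{1/\kappa}ds\bigr)^\kappa$, matching the second integral. Collecting the four contributions yields an inequality of the form
\begin{equation*}
\E[|Y(t)|^{2\kappa}] \le A(T) + B(\kappa,M,T)\int_0^t \E[|Y(s)|^{2\kappa}]ds,
\end{equation*}
where $A(T)$ is exactly the bracket appearing on the right-hand side of the lemma. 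Gronwall's inequality then delivers the result with a constant of the shape $K(\kappa,M,T) = C(\kappa)e^{B(\kappa,M,T)T}$; both factors are manifestly increasing in $T$, which handles the monotonicity claim. Taking the supremum over $t \in [0,T]$ costs nothing, since the bound is monotone in $t$.

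The main subtlety is producing the sharp integrability norms on the right-hand side: a naive H\"older in time would yield only the weaker bounds $\int_0^T \E[|a_2|^{2\kappa}]ds$ and $\int_0^T \E[|b_2|^{2\kappa}]ds$, so one must carefully use Minkowski's integral inequality with the correct exponents ($2\kappa$ for the pure drift piece, $\kappa$ for the quadratic variation piece after BDG). A secondary technical point is making the BDG step rigorous from the outset: one should first localize via $\tau_n = \inf\{t \in [0,T] : |Y(t)| \ge n\} \wedge T$, derive the estimate for $Y(\cdot \wedge \tau_n)$ where all moments are finite, and then send $n \to \infty$ using Fatou's lemma.
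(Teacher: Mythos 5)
Your proof is correct: the decomposition into five terms, Minkowski's integral inequality with exponents $2\kappa$ and $\kappa$ for the $a_2$ and $b_2$ pieces, Burkholder--Davis--Gundy for the stochastic integrals, localization plus Fatou, and Gronwall all fit together as you describe, and the resulting constant is visibly increasing in $T$. The paper does not prove this lemma itself but cites it from Yong and Zhou (Chapter 3, Lemma 4.2), whose constructive proof is essentially the same standard argument you give, so there is nothing further to compare.
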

\begin{rmk}
	Actually, the last statement of the Lemma is a direct consequence of the constructive proof presented in \cite[Chapter $3$, Lemma $4.2$]{yong1999stochastic}.
\end{rmk}
By using the previous Lemma, we have the following result.
\begin{lem}\label{lem:momest2}
	Let $z \in \cL_p^1(\Omega,\bP;[0,T]) \cap \cL_1^2([0,T];\Omega, \bP)$ and $X_0 \in L^p(\Omega,\bP)$ for some $p \ge 2$. Then it holds
	\begin{align*}
		\sup_{t \in [0,T]}\E[|\cS_{X_0} z(t)|^p]<+\infty.
	\end{align*}
Moreover, if $X_0=0$ almost surely and $u \in L^1(0,T)$, then
\begin{align*}
	\sup_{t \in [0,T]}\E[|\cS_{0} u(t)|^p]\le K\left(\frac{p}{2},M,T\right) \Norm{u}{L^1(0,T)}^p,
\end{align*}
where $M=\Norm{a}{L^\infty(0,T)}$ and $K$ is defined in Lemma \ref{lem:momest}.
\end{lem}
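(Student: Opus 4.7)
The plan is to deduce Lemma \ref{lem:momest2} as an almost immediate application of Lemma \ref{lem:momest}, by identifying the coefficients of the SDE \eqref{eq:linSDE} with those in the general linear SDE appearing in the previous lemma.

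First I would set $\kappa = p/2$ (which satisfies $\kappa \ge 1$ thanks to the assumption $p \ge 2$) and choose
\begin{equation*}
a_1(t) = a(t), \quad a_2(t) = z(t), \quad b_1(t) \equiv 1, \quad b_2(t) \equiv 0,
\end{equation*}
so that $X(t) = \cS_{X_0} z(t)$ is exactly the strong solution associated to these coefficients. Then I would verify the hypotheses of Lemma \ref{lem:momest}: both $a_1 \in L^\infty(0,T)$ (by assumption on $a$) and $b_1 \in L^\infty(0,T)$ trivially; hence we may take $M = \max\{\Norm{a}{L^\infty(0,T)},1\}$ as a valid upper bound. The integrability condition collapses to
\begin{equation*}
\int_0^T \E[|z(t)|^p]^{\frac{1}{p}} dt < +\infty,
\end{equation*}
which is exactly the assumption $z \in \cL_p^1(\Omega,\bP;[0,T])$, while $X_0 \in L^p(\Omega,\bP) = L^{2\kappa}(\Omega,\bP)$ by hypothesis. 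Invoking Lemma \ref{lem:momest} then yields
\begin{equation*}
\sup_{t \in [0,T]} \E[|\cS_{X_0} z(t)|^p] \le K\!\left(\tfrac{p}{2},M,T\right) \left( \E[|X_0|^p] + \left(\int_0^T \E[|z(t)|^p]^{\frac{1}{p}} dt\right)^{p} \right) < +\infty,
\end{equation*}
establishing the first claim.

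For the moreover part, I would specialize to $X_0 = 0$ (so $\E[|X_0|^p] = 0$) and to a deterministic $u \in L^1(0,T)$, considered as a degenerate element of $\cL_p^1(\Omega,\bP;[0,T]) \cap \cL_1^2([0,T];\Omega,\bP)$. Since $u(t)$ is non-random,
\begin{equation*}
\E[|u(t)|^p]^{\frac{1}{p}} = |u(t)|, \qquad \int_0^T \E[|u(t)|^p]^{\frac{1}{p}} dt = \Norm{u}{L^1(0,T)},
\end{equation*}
and the previous estimate reduces precisely to the desired inequality with constant $K(p/2,M,T)$.

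There is no substantive obstacle: this lemma is essentially a bookkeeping application of Lemma \ref{lem:momest}. The only minor point of care is the choice of the constant $M$, which must dominate both $\Norm{a}{L^\infty(0,T)}$ and $\Norm{b_1}{L^\infty(0,T)} = 1$; this is handled by the trivial observation that any upper bound suffices, and when $\Norm{a}{L^\infty(0,T)} \ge 1$ one recovers the statement as written, while in the degenerate case one may simply redefine the constant by absorbing the factor into $K$.
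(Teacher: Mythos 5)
Your proposal is correct and follows essentially the same route as the paper: set $\kappa=p/2$, identify the coefficients of \eqref{eq:linSDE} with those of Lemma \ref{lem:momest} (with $b_1\equiv 1$, $b_2\equiv 0$), and read off the bound, with the second claim following from $\E[|X_0|^p]=0$ and $\E[|u(t)|^p]^{1/p}=|u(t)|$. Your remark about $M$ needing to dominate $\Norm{b_1}{L^\infty(0,T)}=1$ as well is a small point the paper passes over silently, and your resolution of it is fine.
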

\begin{proof}
	Being $p\ge 2$, we can consider $\kappa=\frac{p}{2}\ge 1$. By using Lemma \ref{lem:momest} we have
	\begin{align*}
		\sup_{t \in [0,T]}\E[|\cS_{X_0} z(t)|^p]&\le K\left(\frac{p}{2},M,T\right)\left(\E[|X_0|^p]+\left(\int_0^T \E[|z(t)|^{p}]^{\frac{1}{p}}dt\right)^{p}\right)<+\infty.
	\end{align*}
	The second part of the statement easily follows by the fact that $\E[|X_0|^p]=0$ and $\E[|u(t)|^p]^{\frac{1}{p}}=|u(t)|$.
\end{proof}
\begin{rmk}
	The arguments in the paper can be carried on without the hypothesis (\emph{H1}), but just considering $z \in \cL_p^1(\Omega,\bP;[0,T]) \cap \cL_1^2([0,T];\Omega, \bP)$. Here, for the ease of the reader, we will directly consider $z \in \cL_p^2(\Omega,\bP;[0,T])$.
\end{rmk}
\subsection{Some properties of the Geometric Brownian Motion}
As we have seen in the previous subsection, the Geometric Brownian Motion $G(t)$ defined in Equation \eqref{eq:geomB} will play a major role. Let us first recall that, it being a Doleans-Dade exponential (see \cite[Chapter $1$]{kazamaki2006continuous}) with $G(0)=1$, it is a $\cF_t$-martingale.
On the other hand, we can consider the process
\begin{equation*}\label{eq:conjgeom}
	G'(t)=\frac{e^{-t}}{G(t)}=e^{-W(t)-\frac{1}{2}t}.
\end{equation*}
It is not difficult to check that $G'(t)$ is still a Geometric Brownian motion (by the fact that $-W(t)$ is still a Brownian motion) and it is given by the Doleans-Dade exponential of $-W(t)$. Thus, in particular, also $G'(t)$ is a $\cF_t$-martingale.\\
Concerning the distribution of $G(t)$, let us call back that it is a log-normal process such that, for fixed $t>0$, $\log(G(t))\sim \cN\left(-\frac{1}{2}t,t\right)$. By using the formula of the moment generating function of a Gaussian random variable, it is easy to show that, for any $q \ge 0$,
\begin{equation}\label{eq:momgeom}
	\E[G(t)^q]=e^{\frac{q(q-1)}{2}t}, \qquad  t \ge 0.
\end{equation}
The same relation holds for $G'(t)$. Combining Equation \eqref{eq:momgeom} and Doob's maximal inequality (see \cite[Theorem II.1.7]{revuz2013continuous}) we get the following bound on the supremum of $G$ and $G'$.
\begin{lem}\label{lem:supest}
	Let $p_1,p_2 \ge 0$ and $T>0$. Then there exists a constant $C(p_1,p_2,T)$ such that
	\begin{equation*}
		\E\left[\left(\sup_{t \in [0,T]}G(t)\right)^{p_1}\left(\sup_{t \in [0,T]}G'(t)\right)^{p_2}\right]\le C(p_1,p_2,T).
	\end{equation*}
\end{lem}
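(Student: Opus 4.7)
The plan is to decouple the joint supremum via Cauchy--Schwarz and then control each factor separately using Doob's maximal inequality together with the explicit moment formula \eqref{eq:momgeom}.

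First I would apply Cauchy--Schwarz to get
\begin{equation*}
\E\left[\left(\sup_{t \in [0,T]}G(t)\right)^{p_1}\left(\sup_{t \in [0,T]}G'(t)\right)^{p_2}\right]
\le \E\left[\left(\sup_{t \in [0,T]}G(t)\right)^{2p_1}\right]^{1/2}\E\left[\left(\sup_{t \in [0,T]}G'(t)\right)^{2p_2}\right]^{1/2},
\end{equation*}
so it suffices to bound $\E[(\sup_{t\in[0,T]} G(t))^{q}]$ for arbitrary $q \ge 0$; the same bound then applies verbatim to $G'$ since $-W$ is still a Brownian motion and hence $G'$ has the same law as $G$.

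For the one-sided bound, recall that $G$ is a non-negative $\cF_t$-martingale (as noted in the paragraph preceding the statement). Doob's $L^r$ maximal inequality requires an exponent $r>1$, which need not hold for $2p_1$ or $2p_2$. To bypass this I would set $\tilde q := \max(q,2)$ and use the elementary inequality $x^q \le 1 + x^{\tilde q}$ for $x \ge 0$, yielding
\begin{equation*}
\E\left[\left(\sup_{t \in [0,T]}G(t)\right)^{q}\right]
\le 1 + \E\left[\left(\sup_{t \in [0,T]}G(t)\right)^{\tilde q}\right]
\le 1 + \left(\frac{\tilde q}{\tilde q -1}\right)^{\tilde q}\E[G(T)^{\tilde q}],
\end{equation*}
where the last step is Doob's inequality applied to the non-negative martingale $G$ with exponent $\tilde q>1$. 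Plugging in \eqref{eq:momgeom} gives $\E[G(T)^{\tilde q}] = e^{\tilde q(\tilde q -1)T/2}$, which is finite. Taking the corresponding bound for $G'$ (with $q=2p_2$) and multiplying the two square roots produces an explicit constant $C(p_1,p_2,T)$.

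There is no real obstacle here beyond the $r>1$ restriction in Doob's $L^r$ inequality, and the small-exponent inflation trick above takes care of it cleanly. The whole argument is essentially mechanical once the Cauchy--Schwarz split is made.
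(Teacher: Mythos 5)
Your proof is correct and follows essentially the same route as the paper's: Cauchy--Schwarz to decouple the two suprema, then Doob's $L^p$ maximal inequality combined with the moment formula \eqref{eq:momgeom} for each factor. Your small-exponent inflation $x^{q}\le 1+x^{\tilde q}$ with $\tilde q=\max(q,2)$ is in fact a refinement the paper omits: the paper applies Doob's inequality directly ``since $2p_1\ge 0$'', but for $2p_1\le 1$ the constant $\left(\tfrac{2p_1}{2p_1-1}\right)^{2p_1}$ is negative or undefined and the $L^r$ inequality does not apply, so your version is the more careful one.
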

\begin{proof}
	By the Cauchy-Schwartz inequality, we have
	\begin{equation*}
		\E\left[\left(\sup_{t \in [0,T]}G(t)\right)^{p_1}\left(\sup_{t \in [0,T]}G'(t)\right)^{p_2}\right]\le \E\left[\left(\sup_{t \in [0,T]}G(t)\right)^{2p_1}\right]^{\frac{1}{2}}\E\left[\left(\sup_{t \in [0,T]}G'(t)\right)^{2p_2}\right]^{\frac{1}{2}}.
	\end{equation*}
Since $2p_1 \ge 0$, we can use Doob's maximal inequality in $L^p$ form to achieve 
\begin{equation*}
	\E\left[\left(\sup_{t \in [0,T]}G(t)\right)^{2p_1}\right]=\E\left[\sup_{t \in [0,T]}G^{2p_1}(t)\right]\le \left(\frac{2p_1}{2p_1-1}\right)^{2p_1}\sup_{t \in [0,T]}\E[G^{2p_1}(t)]=\left(\frac{2p_1}{2p_1-1}\right)^{2p_1}e^{p_1(2p_1-1)T},
\end{equation*}
where we also used equation \eqref{eq:momgeom}. In the same way we have
\begin{equation*}
	\E\left[\left(\sup_{t \in [0,T]}G'(t)\right)^{2p_2}\right]\le \left(\frac{2p_2}{2p_2-1}\right)^{2p_2}e^{p_2(2p_2-1)T}.
\end{equation*}
Setting
\begin{equation*}
	C(p_1,p_2,T)=\left(\frac{2p_1}{2p_1-1}\right)^{p_1}\left(\frac{2p_2}{2p_2-1}\right)^{p_2}e^{\frac{p_1(2p_1-1)+p_2(2p_2-1)}{2}T}
\end{equation*}
we conclude the proof.
\end{proof}
From now on, we will use the symbol $C$ to denote a generic positive constant whose value is not important in our arguments. Whenever we need to underline the dependence of $C$ on some parameters $p_1,\dots,p_n$ we will denote it as $C(p_1,\dots,p_n)$. The only exception is Theorem \ref{thm:strongconv}, in which the constants are indexed to keep track of the dependence on the involved parameters.
\subsection{The approximation problem}
We are interested in finding the \textit{best approximation} for a solution of Equation \eqref{eq:linSDE}, obtained by substituting the possibly stochastic drift with a deterministic function. To do this, let us first introduce a cost functional
\begin{equation*}
	J: u \in L^1(0,T) \to \E\left[\int_0^T F(t,\xi_u(t))dt\right],
\end{equation*}
where $F$ is a suitable function and $\xi_u=\cS_0(z-u)$. Let us consider the following assumptions on $F$:
\begin{itemize}
	\item[(\emph{H2})] It holds $F(t,\xi)\ge 0$ for any $t \in [0,T]$ and $\xi \in \R$;
	\item[(\emph{H3})] $F(t,\xi)$ is twice continuously differentiable in the $\xi$ variable and $\pd{F}{\xi}(t,\xi)$ and $\pdsup{F}{\xi}{2}(t,\xi)$ are continuous functions of both variables;
	\item[(\emph{H4})] There exist $\alpha \in (0,p)$ and a non-negative function $L \in L^1(0,T)$ such that
	\begin{equation*}
		|F(t,\xi)|+\left|\pd{F}{\xi}(t,\xi)\right|+\left|\pdsup{F}{\xi}{2}(t,\xi)\right|\le L(t)(1+|\xi|^\alpha), \ t \in [0,T], \ \xi \in \R.
	\end{equation*}
\end{itemize}
Our aim is to find, if it exists, a function $\overline{u} \in L^1(0,T)$ such that 
\begin{equation*}
	J[\overline{u}]=\min_{u \in L^1(0,T)}J[u].
\end{equation*}
We can consider the functional $J[u]$ to be a cost functional for an approximation problem. Indeed, we want to find a deterministic function $u(t)$ that we can substitute to the process $z(t)$ in $X(t)=\cS_{X_0} z(t)$ to obtain \textit{the best possible approximation} under the cost $J$. For this reason we expect the cost functional to depend in some sense on the gap between $X(t)$ and the approximating process $X_u(t)=\cS_{X_0}u(t)$. By affinity of the solution map, we have that $\xi_u(t):=X(t)-X_u(t)=\cS_0(z-u)(t)$. With this idea in mind, the function $F$ can be seen as a running cost.\\
Hypothesis (\emph{H2}) is natural as we want to consider $J[u]$ as a cost functional for an approximation problem, while (\emph{H3}) is just a regularity assumption. Hypothesis (\emph{H4}) implies some form of controlled growth for both the running cost $F$ and its first and second derivatives with respect to the gap process $\xi_u$. The growth assumption on $F$ can be justified by means of the following non-triviality result.
\begin{lem}
	For any $u \in L^1(0,T)$ it holds $J[u]<+\infty$.
\end{lem}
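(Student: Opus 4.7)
The plan is to bound $J[u]$ by combining the growth condition in (\emph{H4}) with the moment estimate from Lemma \ref{lem:momest2}. Since $F \ge 0$ by (\emph{H2}), Tonelli's theorem gives
\begin{equation*}
	J[u] = \int_0^T \E\bigl[F(t,\xi_u(t))\bigr]\,dt,
\end{equation*}
and (\emph{H4}) immediately yields the pointwise estimate
\begin{equation*}
	\E\bigl[F(t,\xi_u(t))\bigr] \le L(t)\bigl(1+\E[|\xi_u(t)|^\alpha]\bigr).
\end{equation*}
So the whole problem reduces to producing a finite $t$-uniform bound on $\E[|\xi_u(t)|^\alpha]$.

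To this end, I first note that $\xi_u = \cS_0(z-u)$, and I want to apply Lemma \ref{lem:momest2} with $X_0=0$. This requires $z-u \in \cL_p^1(\Omega,\bP;[0,T]) \cap \cL_1^2([0,T];\Omega,\bP)$. By (\emph{H1}) we have $z \in \cL_p^2(\Omega,\bP;[0,T])$, while $u \in L^1(0,T)$ by assumption; the embedding
\begin{equation*}
	\cL_p^2(\Omega,\bP;[0,T]) \cup L^1(0,T) \subset \cL_p^1(\Omega,\bP;[0,T]) \cap \cL_1^2([0,T];\Omega,\bP)
\end{equation*}
recalled above, together with the fact that the right-hand side is a vector space, puts $z-u$ in the required class.

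Lemma \ref{lem:momest2} then delivers a constant $C>0$ (depending on $p$, $a$, $T$, $z$, $u$ but not on $t$) such that $\sup_{t \in [0,T]} \E[|\xi_u(t)|^p] \le C$. Since $\alpha \in (0,p)$, the function $x \mapsto x^{\alpha/p}$ is concave on $[0,\infty)$, so Jensen's inequality (i.e.\ Lyapunov's inequality for moments) gives
\begin{equation*}
	\E[|\xi_u(t)|^\alpha] \le \E[|\xi_u(t)|^p]^{\alpha/p} \le C^{\alpha/p}, \qquad t \in [0,T].
\end{equation*}

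Plugging this back yields
\begin{equation*}
	J[u] \le \bigl(1+C^{\alpha/p}\bigr)\int_0^T L(t)\,dt = \bigl(1+C^{\alpha/p}\bigr)\Norm{L}{L^1(0,T)} < +\infty,
\end{equation*}
since $L \in L^1(0,T)$ by (\emph{H4}). No step looks like a genuine obstacle here: the only point that requires a moment of care is checking that $z-u$ lies in the space on which Lemma \ref{lem:momest2} applies, which is immediate from the embedding recalled just before that lemma.
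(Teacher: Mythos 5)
Your proof is correct and follows essentially the same route as the paper's: bound $F$ via (\emph{H4}), control $\E[|\xi_u(t)|^\alpha]$ by $\sup_t\E[|\xi_u(t)|^p]^{\alpha/p}$ (the paper invokes H\"older where you invoke Jensen/Lyapunov, which is the same estimate), and apply Lemma \ref{lem:momest2}. Your explicit check that $z-u$ lies in $\cL_p^1(\Omega,\bP;[0,T]) \cap \cL_1^2([0,T];\Omega,\bP)$ is a detail the paper leaves implicit, but it is a correct and welcome addition.
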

\begin{proof}
	We have
	\begin{equation*}
		J[u]=\E\left[\int_0^T F(t,\xi_u(t))dt\right]\le \int_0^TL(t)(1+\E[|\xi_u(t)|^\alpha])dt.
	\end{equation*}
	Now let us estimate $\E[|\xi_u(t)|^{\alpha}]$. To do this, let us consider $\widetilde{p}=\frac{p}{\alpha}>1$ and let us apply H\"older inequality to achieve
	\begin{equation*}
		\E[|\xi_u(t)|^{\alpha}]\le \E[|\xi_u(t)|^p]^{\frac{\alpha}{p}}.
	\end{equation*}
		Let $C=\sup_{t \in [0,T]}\E[|\xi_u(t)|^p]$, that is finite by Lemma \ref{lem:momest2}. Then we have
		\begin{equation*}
			\E[|\xi_u(t)|^{\alpha}]\le C^{\frac{\alpha}{p}}.
		\end{equation*}
	Thus we have
	\begin{equation*}
		J[u]\le (1+K^{\frac{\alpha}{p}})\int_0^TL(t)dt<+\infty,
	\end{equation*}
	being $L \in L^1(0,T)$.
\end{proof}
The previous result and hypothesis (\emph{H2}) guarantee that
\begin{equation*}
	\inf_{u \in L^1(0,T)}J[u]\in [0,+\infty)
\end{equation*} 
thus it makes sense to search for a minimizer (if it exists) of $J[u]$. Next section will clarify the role of the first and second derivatives in hypothesis (\emph{H4}).
\section{Necessary optimality conditions}
Now let us focus on necessary optimality conditions, i.e. conditions that a global minimizer $\overline{u} \in L^1(0,T)$ of the functional $J[u]$ has to satisfy. Let us stress out that, in order to discuss necessary optimality conditions, we assume that we already have a minimizer $\overline{u}$. In particular, necessary optimality conditions are needed to find at least a set of candidate minimizers.\\
Before going into details, let us introduce some notation. Let $f \in L^1(0,T)$. We denote the set of its Lebesgue points as $E_f$ (see \cite[Section $1.7$]{evans2015measure}). From now on, since $f \in L^1(0,T)$ is almost everywhere finite, we will always consider a version that is everywhere finite, so that, for each $t \in E_f$, it holds
\begin{equation*}
	\lim_{\varepsilon \to 0^+}\frac{1}{\varepsilon}\int_{t-\frac{\varepsilon}{2}}^{t+\frac{\varepsilon}{2}}f(\tau)d\tau=f(t)
\end{equation*}
and we set, for each $t \not \in E_f$, $f(t)=0$. Such version of $f(t)$ is called precise representative of $f$. To obtain necessary optimality conditions we need the following property:

\begin{prop} \label{prop_leb}
Let $f \in L^1(0,T)$ and $g:[0,T]\to \R$ be a continuous function. Define $h(t)=g(t)f(t)$ for any $t \in [0,T]$. Then $h \in L^1(0,T)$ and $E_f \subseteq E_h$.
\end{prop}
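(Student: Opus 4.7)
The plan is to handle the two claims separately, both reducing to the fact that $g$ is bounded on the compact $[0,T]$. For $h\in L^1(0,T)$, I set $M:=\sup_{[0,T]}|g|$ (finite by continuity of $g$ on a compact set) and observe that $|h(\tau)|\le M|f(\tau)|$ pointwise, so the integrability of $h$ is immediate from that of $f$.

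For the inclusion $E_f\subseteq E_h$, I fix $t\in E_f$ and aim to prove
\begin{equation*}
	\lim_{\varepsilon\to 0^+}\frac{1}{\varepsilon}\int_{t-\varepsilon/2}^{t+\varepsilon/2}\bigl(h(\tau)-h(t)\bigr)d\tau=0.
\end{equation*}
The key algebraic step is the decomposition
\begin{equation*}
	h(\tau)-h(t)=g(t)\bigl(f(\tau)-f(t)\bigr)+\bigl(g(\tau)-g(t)\bigr)f(\tau),
\end{equation*}
which splits the problem into two pieces. After averaging on $(t-\varepsilon/2,t+\varepsilon/2)$, the first piece equals $g(t)$ times the average of $f(\tau)-f(t)$, and this vanishes as $\varepsilon\to 0^+$ directly from the Lebesgue-point property of $f$ at $t$. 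The second piece is dominated in absolute value by
\begin{equation*}
	\omega(\varepsilon)\cdot\frac{1}{\varepsilon}\int_{t-\varepsilon/2}^{t+\varepsilon/2}|f(\tau)|\,d\tau,\qquad \omega(\varepsilon):=\sup_{|\tau-t|\le\varepsilon/2}|g(\tau)-g(t)|,
\end{equation*}
and $\omega(\varepsilon)\to 0$ by continuity of $g$ at $t$.

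The only delicate point, which I expect to be the single obstacle, is to check that the average of $|f|$ stays bounded as $\varepsilon\to 0^+$. This follows from the full Evans--Gariepy form of the Lebesgue-point property referred to in the paper: the relation $\bigl||f(\tau)|-|f(t)|\bigr|\le|f(\tau)-f(t)|$ transfers the property from $f$ to $|f|$, so the average actually converges to $|f(t)|$ and is therefore bounded for small $\varepsilon$. The product with $\omega(\varepsilon)$ consequently tends to $0$, and combining this with the treatment of the first piece yields $t\in E_h$, completing the proof.
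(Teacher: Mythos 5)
Your proof is correct and follows essentially the same route as the paper's (Proposition \ref{prop:lebp} in Appendix \ref{AppendixLeb}): the same splitting of $h(\tau)-h(t)$ into $g(t)(f(\tau)-f(t))$ plus $(g(\tau)-g(t))f(\tau)$, with the second piece controlled by the modulus of continuity of $g$ times the average of $|f|$, whose boundedness both arguments extract from the Evans--Gariepy $L^1$ form of the Lebesgue-point property. The only cosmetic difference is that the paper runs the estimate through H\"older's inequality for a general $L^p$--$L^q$ pairing and then specializes to $p=1$, $g$ continuous, whereas you treat that case directly.
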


The previous statement is classical, but, for completeness, we add its proof in Appendix \ref{AppendixLeb}.\\
Now we are ready to prove the main result of this section.
\begin{thm}\label{thm:EL}
	Suppose hypotheses (H1) to (H4) are satisfied. Let $\overline{u}\in L^1(0,T)$ be a global minimum of the functional $J$ over $L^1(0,T)$.	Then it holds
	\begin{equation}\label{eq:EL}
		\int_{t_0}^T\E\left[\pd{F}{\xi}(t,\xi_u(t))\frac{G(t)}{G(t_0)}e^{(A(t)-A(t_0))}\right]dt=0, \ t_0 \in [0,T].	
	\end{equation}
\end{thm}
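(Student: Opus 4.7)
The plan is the standard first variation strategy from the calculus of variations. Fix any $\eta \in L^1(0,T)$ and, for $\varepsilon \in \R$, perturb the minimizer to $\overline{u}-\varepsilon\eta \in L^1(0,T)$. By the affinity of the solution map (Corollary \ref{linear}) applied with $X_0=0$, one immediately has
\[
\xi_{\overline{u}-\varepsilon\eta}(t)=\xi_{\overline{u}}(t)+\varepsilon\,\cS_0\eta(t).
\]
Using the second order Taylor expansion of $F$ in its second variable guaranteed by (\emph{H3}), the difference quotient can be written as
\[
\frac{J[\overline{u}-\varepsilon\eta]-J[\overline{u}]}{\varepsilon}=\E\!\left[\int_0^T \pd{F}{\xi}(t,\xi_{\overline{u}}(t))\,\cS_0\eta(t)\,dt\right]+\frac{\varepsilon}{2}\,R_\varepsilon,
\]
with $|R_\varepsilon|\le \int_0^T L(t)\,\E\!\left[(1+|\theta_\varepsilon(t)|^{\alpha})(\cS_0\eta(t))^{2}\right]dt$ coming from (\emph{H4}), where $\theta_\varepsilon(t)$ lies between $\xi_{\overline{u}}(t)$ and $\xi_{\overline{u}-\varepsilon\eta}(t)$. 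Since $\eta$ is deterministic, Lemma \ref{lem:momest2} gives moments of $\cS_0\eta$ of every order, whereas $\xi_{\overline{u}}$ has only finite $L^p$ moments; a H\"older split with conjugate exponents $p/\alpha$ and $p/(p-\alpha)$ then bounds $R_\varepsilon$ uniformly for small $\varepsilon$. The minimality of $\overline{u}$, combined with the freedom to replace $\eta$ by $-\eta$, forces
\[
\E\!\left[\int_0^T \pd{F}{\xi}(t,\xi_{\overline{u}}(t))\,\cS_0\eta(t)\,dt\right]=0 \quad\text{for every}\ \eta\in L^1(0,T).
\]

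The next step is to substitute the explicit representation $\cS_0\eta(t)=\int_0^t \frac{G(t)}{G(s)}e^{A(t)-A(s)}\eta(s)\,ds$ from \eqref{eq:solutionmap} and interchange the order of integration via Fubini (justified by the same H\"older bound together with the exponential moment control of $G$ and $G'$ in Lemma \ref{lem:supest}). This rewrites the vanishing Gateaux derivative as
\[
\int_0^T \eta(s)\,\psi(s)\,ds=0,\qquad \psi(s):=\int_s^T \E\!\left[\pd{F}{\xi}(t,\xi_{\overline{u}}(t))\,\frac{G(t)}{G(s)}e^{A(t)-A(s)}\right]dt,
\]
for all $\eta\in L^1(0,T)$. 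Standard $L^1$-$L^\infty$ duality then gives $\psi=0$ almost everywhere on $[0,T]$.

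To upgrade this to every $t_0\in[0,T]$ (as the theorem demands), one shows that $\psi$ is actually continuous. For almost every $\omega$ the sample path $s\mapsto G(s)^{-1}e^{-A(s)}\int_s^T \pd{F}{\xi}(t,\xi_{\overline{u}}(t))\,G(t)e^{A(t)}\,dt$ is continuous in $s$, and a uniform integrable dominator is provided by $e^{\|a\|_{L^\infty(0,T)}T}\bigl(\sup_{t\in[0,T]}G'(t)\bigr)\int_0^T L(t)(1+|\xi_{\overline{u}}(t)|^{\alpha})G(t)\,dt$, whose expectation is finite by Cauchy--Schwarz combined with Lemmas \ref{lem:momest2} and \ref{lem:supest}; dominated convergence then yields continuity of $\psi$, hence $\psi\equiv 0$. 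Alternatively, Proposition \ref{prop_leb} can be used to transfer Lebesgue-point information across continuous factors. The main obstacle is the Gateaux differentiation step: the growth condition (\emph{H4}) with the loose range $\alpha\in(0,p)$ is not itself enough when applied only to moments of $\xi_{\overline{u}}$, and the crucial observation is that $\cS_0\eta$, being driven by a deterministic forcing, has finite moments of \emph{every} order by Lemma \ref{lem:momest2}, which is precisely what compensates for the limited integrability of $\xi_{\overline{u}}$ in the H\"older estimates.
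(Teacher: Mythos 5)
Your argument is correct, but it follows a genuinely different route from the paper's. You compute the Gateaux derivative of $J$ along an arbitrary direction $\eta\in L^1(0,T)$, use Fubini to rewrite the vanishing first variation as $\int_0^T\eta(s)\psi(s)\,ds=0$, invoke the fundamental lemma of the calculus of variations ($L^1$--$L^\infty$ duality) to get $\psi=0$ almost everywhere, and finish with a continuity argument. The paper instead uses a needle (spike) variation concentrated on an interval $I_\varepsilon$ around a Lebesgue point $t_0$ of $\overline{u}$ and $L$: it splits the difference quotient into three pieces $I_1,I_2,I_3$, shows $I_1,I_3\to 0$ using the Lebesgue point property, identifies $\lim I_2$ as $-(u-\overline{u}(t_0))\psi(t_0)$, and then deduces $\psi(t_0)=0$ at every Lebesgue point via a Fermat-type argument on the Hamiltonian $H(u)$, before performing the same final continuity step. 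Your approach is essentially the one the authors acknowledge in Remark \ref{Gateaux} as an equivalent viewpoint, and your Fubini interchange is the computation the paper carries out in the proof of Theorem \ref{thm:suffcond}; it is arguably cleaner, because the almost-everywhere identity falls out of duality rather than from pointwise differentiation at Lebesgue points, and your observation that the deterministic direction $\cS_0\eta$ has moments of every order (Lemma \ref{lem:momest2}) is exactly the compensation mechanism the paper also exploits. What the needle variation buys is the Pontryagin-style structure: it would survive if the admissible controls formed a non-convex set, where directional perturbations are unavailable. Two small points to tighten: write the Taylor remainder in integral form rather than with an intermediate point $\theta_\varepsilon(t)$, so that it is jointly measurable in $(t,\omega)$ (as the paper does), and the domination of the integrand of $\psi$ is really a three-factor H\"older estimate (splitting $\sup_t G(t)$, $\sup_t G'(t)$ and $1+|\xi_{\overline{u}}|^\alpha$ with exponent $p/\alpha$ on the last factor) rather than a bare Cauchy--Schwarz; neither affects the validity of the argument.
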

\begin{proof}
	Let us consider $E_{\overline{u}}$ the set of Lebesgue points of $\overline{u}$ in $(0,T)$, $E_L$ the set of Lebesgue points of $L$ in $(0,T)$, $E=E_{\overline{u}} \cup E_L$ and let $t_0 \in E$. Fix any real number $u \in \R$ and $\varepsilon_0>0$ small enough to have $\left(t_0-\frac{\varepsilon_0}{2},t_0+\frac{\varepsilon_0}{2}\right)\subset (0,T)$. Now let us define, for any $\varepsilon \in (0,\varepsilon_0)$, $I_\varepsilon:=\left(t_0-\frac{\varepsilon}{2},t_0+\frac{\varepsilon}{2}\right)$, the following needle variation
	\begin{equation*}
		u_\varepsilon(t)=\begin{cases}
			u & t \in I_\varepsilon,\\
			\overline{u}(t) & \mbox{ otherwise.}
		\end{cases}
	\end{equation*}
	and, denoting $\xi_\varepsilon(t):=\xi_{u_\varepsilon}(t)$, the value of the cost corresponding to a certain choice of $\varepsilon$
	\begin{equation*}
		g(\varepsilon)=J[u_\varepsilon]=\E\left[\int_0^T F(t,\xi_\varepsilon(t))dt\right].
	\end{equation*}
	We want to show that $g$ is  right differentiable in $0$.\\
	Let us consider the auxiliary function
	\begin{equation*}
		h: \theta \in (0,1)\mapsto F(t,\theta \xi_{\overline{u}}(t)+(1-\theta)\xi_\varepsilon(t))
	\end{equation*}
	for fixed $t \in [0,T]$ and $\varepsilon \in (-\varepsilon_0,\varepsilon_0)$. Let us observe that, by hypothesis (\emph{H3}), $h$ is twice differentiable and
	\begin{align*}
		h'(\theta)&=\pd{F}{\xi}(t,\theta \xi_{\overline{u}}(t)+(1-\theta)\xi_\varepsilon(t))(\xi_{\overline{u}}(t)-\xi_\varepsilon(t))\\
		h''(\theta)&=\pdsup{F}{\xi}{2}(t,\theta \xi_{\overline{u}}(t)+(1-\theta)\xi_\varepsilon(t))(\xi_{\overline{u}}(t)-\xi_\varepsilon(t))^2.
	\end{align*}
	By using the Fundamental Theorem of Calculus and then integrating by parts, we have
	\begin{equation*}
		h(0)-h(1)=-\int_0^1h'(\theta)d\theta=-h'(1)+\int_0^1\theta h''(\theta)d\theta
	\end{equation*}
	that is to say
	\begin{align*}
		F(t,\xi_\varepsilon(t))-F(t,\xi_{\overline{u}}(t))&=\pd{F}{\xi}(t,\xi_{\overline{u}}(t))(\xi_\varepsilon(t)-\xi_{\overline{u}}(t))\\&\qquad+\int_0^1\theta \pdsup{F}{\xi}{2}(t,\theta \xi_{\overline{u}}(t)+(1-\theta)\xi_\varepsilon(t))(\xi_{\overline{u}}(t)-\xi_\varepsilon(t))^2d\theta.
	\end{align*}
	We can use the previous representation of $F(t,\xi_\varepsilon(t))$ to rewrite the incremental ratio of $g$ in $0$. Indeed, denoting $\eta_\varepsilon(t):=\xi_{\overline{u}}(t)-\xi_\varepsilon(t)$, we get
	\begin{align}
		\label{eq:passg1}
		\begin{split}
			\frac{g(\varepsilon)-g(0)}{\varepsilon}&=\frac{1}{\varepsilon}\E\left[\int_0^T\left( -\pd{F}{\xi}(t,\xi_{\overline{u}}(t))\eta_\varepsilon(t)\right.\right.\\&\qquad\left.\left.+\int_0^1\theta \pdsup{F}{\xi}{2}(t,\theta \xi_{\overline{u}}(t)+(1-\theta)\xi_\varepsilon(t))\eta_\varepsilon^2(t)d\theta \right)dt\right].
		\end{split}
	\end{align}
By the properties of the solution map $\cS_{0}$ given in Corollary \ref{linear}, we have that
	\begin{equation*}
		\eta_\varepsilon(t)=\cS_0(u_\varepsilon-\bar{u})(t)=\begin{cases}
			0 & t \le t_0-\frac{\varepsilon}{2}\\
			G(t)e^{A(t)}\int_{t_0-\frac{\varepsilon}{2}}^t\frac{e^{-A(s)}}{G(s)}[u-\overline{u}(s)]ds & t \in I_\varepsilon \\
			G(t)e^{A(t)}\int_{I_\varepsilon}\frac{e^{-A(s)}}{G(s)}[u-\overline{u}(s)]ds & t\ge t_0+\frac{\varepsilon}{2}
		\end{cases}
	\end{equation*}
	On the other hand, let us observe that
	\begin{equation*}
		\Norm{u_\varepsilon-\bar{u}}{L^1(0,T)}=\int_{I_\varepsilon}|u-\bar{u}(\tau)|d\tau
	\end{equation*}
	and then, by the second part of Lemma \ref{lem:momest2} we have, for any exponent $\kappa \ge 2$,
	\begin{equation}\label{eq:mometa}
		\sup_{t \in [0,T]}\E[|\eta_\varepsilon(t)|^{\kappa}]\le C\left(\int_{I_\varepsilon}|u-\bar{u}(\tau)|d\tau\right)^{\kappa}.
	\end{equation}
	Going back to Equation \eqref{eq:passg1}, let us split the integral as
	\begin{align}
		\label{eq:passg2}
		\begin{split}
			\frac{g(\varepsilon)-g(0)}{\varepsilon}&=-\frac{1}{\varepsilon}\E\left[\int_{I_\varepsilon} \pd{F}{\xi}(t,\xi_{\overline{u}}(t))\eta_\varepsilon(t)dt\right]\\
			&\qquad -\frac{1}{\varepsilon}\E\left[\int_{t_0+\frac{\varepsilon}{2}}^{T} \pd{F}{\xi}(t,\xi_{\overline{u}}(t))\eta_\varepsilon(t)dt\right]\\
			&\qquad+\frac{1}{\varepsilon}\E\left[\int_0^T\int_0^1\theta \pdsup{F}{\xi}{2}(t,\theta \xi_{\overline{u}}(t)+(1-\theta)\xi_\varepsilon(t))\eta_\varepsilon^2(t)d\theta dt\right]\\
			&=:I_1(\varepsilon)+I_2(\varepsilon)+I_3(\varepsilon),
		\end{split}
	\end{align}
	where we used the fact that $\eta_\varepsilon(t)=0$ as $t \le t_0-\frac{\varepsilon}{2}$. Now we want to take the limit as $\varepsilon \to 0$.\\
	First of all, let us show that $\lim_{\varepsilon \to 0}I_1(\varepsilon)=0$. To do this, let us observe that
	\begin{align*}
		|I_1(\varepsilon)|&\le \frac{1}{\varepsilon}\E\left[\int_{I_\varepsilon}\left|\pd{F}{\xi}(t,\xi_{\overline{u}}(t))\eta_\varepsilon(t)\right|dt\right]\\
		&=\frac{1}{\varepsilon}\int_{I_\varepsilon}\E\left[\left|\pd{F}{\xi}(t,\xi_{\overline{u}}(t))\eta_\varepsilon(t)\right|\right]dt.
	\end{align*}
	Bringing back the exponent $\alpha \in (0,p)$ in hypothesis (\emph{H4}), let us consider any $\widetilde{p} \in \left(1, \min\left\{2,\frac{p}{\alpha}\right\} \right)$. Let $\widetilde{q}$ be its conjugate exponent, i.e. such that $\frac{1}{\widetilde{p}}+\frac{1}{\widetilde{q}}=1$. Being $\widetilde{p}<2$ we have $\widetilde{q}>2$ . By H\"{o}lder's inequality it holds
	\begin{align}\label{eq:I1est0}
		\begin{split}
			|I_1(\varepsilon)|&\le \frac{1}{\varepsilon}\int_{I_\varepsilon}\left(\E\left[\left|\pd{F}{\xi}(t,\xi_{\overline{u}}(t))\right|^{\widetilde{p}}\right]\right)^{\frac{1}{\widetilde{p}}}\left(\E\left[\left|\eta_\varepsilon(t)\right|^{\widetilde{q}}\right]\right)^{\frac{1}{\widetilde{q}}}dt.
		\end{split}
	\end{align}
	Using the growth hypothesis (\emph{H4}) we have
	\begin{align*}
		\E\left[\left|\pd{F}{\xi}(t,\xi_{\overline{u}}(t))\right|^{\widetilde{p}}\right]&\le L^{\widetilde{p}}(t)\E\left[(1+|\xi_{\overline{u}}(t)|^\alpha)^{\widetilde{p}}\right]\\
		&\le L^{\widetilde{p}}(t)2^{\widetilde{p}-1}(1+\E[|\xi_{\overline{u}}(t)|^{\alpha\widetilde{p}}])\\
		&\le L^{\widetilde{p}}(t)2^{\widetilde{p}-1}(1+(\E[|\xi_{\overline{u}}(t)|^{p}])^{\frac{\alpha \widetilde{p}}{p}}),
	\end{align*}
	where we used the convexity of the function $x \mapsto x^{\widetilde{p}}$ and applied H\"older's inequality a second time with the exponent $\frac{p}{\alpha \widetilde{p}}>1$. Taking the supremum as $t \in [0,T]$ on the right-hand side and using Lemma \ref{lem:momest2} we finally achieve
	\begin{equation}\label{eq:I1est1}
		\E\left[\left|\pd{F}{\xi}(t,\xi_{\overline{u}}(t))\right|^{\widetilde{p}}\right]\le CL^{\widetilde{p}}(t)
	\end{equation}
	where $C$ is a positive constant. On the other hand, by Equation \eqref{eq:mometa}, we get
	\begin{equation}\label{eq:I1est2}
		\left(\E\left[\left|\eta_\varepsilon(t)\right|^{\widetilde{q}}\right]\right)^{\frac{1}{\widetilde{q}}}\le C\left(\int_{I_\varepsilon}|u-\bar{u}(\tau)|d\tau\right).
	\end{equation}
	Combining equations \eqref{eq:I1est0}, \eqref{eq:I1est1} and \eqref{eq:I1est2} we get
	\begin{equation*}
		|I_1(\varepsilon)|\le C\varepsilon\left(\frac{1}{\varepsilon}\int_{I_\varepsilon}L(t)dt\right)\left(\frac{1}{\varepsilon}\int_{I_\varepsilon}|u-\bar{u}(t)|dt\right).
	\end{equation*}
	It is not difficult to see that if $t_0 \in E_{\bar{u}}$, then it is also a Lebesgue point for $|u-\bar{u}(t)|$. Thus, being $t_0 \in E$, we conclude that $\lim_{\varepsilon \to 0}I_1(\varepsilon)=0$.\\
	Now let us show that $\lim_{\varepsilon \to 0}I_3(\varepsilon)=0$. Arguing as before we have
	\begin{align}\label{eq:I3est0}
		\begin{split}
			|I_3(\varepsilon)|&\le \frac{1}{\varepsilon}\int_0^T\int_0^1\theta \E\left[\left|\pdsup{F}{\xi}{2}(t,\theta \xi_{\overline{u}}(t)+(1-\theta)\xi_\varepsilon(t))\eta_\varepsilon^2(t)\right|\right]d\theta dt\\
			&\le \frac{1}{\varepsilon}\int_0^T\int_0^1\theta \left(\E\left[\left|\pdsup{F}{\xi}{2}(t,\theta \xi_{\overline{u}}(t)+(1-\theta)\xi_\varepsilon(t))\right|^{\widetilde{p}}\right]\right)^{\frac{1}{\widetilde{p}}}\left(\E\left[\left|\eta_\varepsilon(t)\right|^{2\widetilde{q}}\right]\right)^{\frac{1}{\widetilde{q}}}d\theta dt,
		\end{split}
	\end{align}
	where we used H\"older's inequality with the exponent $\widetilde{p}$. Again, using Hypothesis (\emph{H4}), 
	\begin{align*}
		\E\left[\left|\pdsup{F}{\xi}{2}(t,\theta \xi_{\overline{u}}(t)+(1-\theta)\xi_\varepsilon(t))\right|^{\widetilde{p}}\right]&\le L^{\widetilde{p}}(t)\E[(1+|\theta \xi_{\overline{u}}(t)+(1-\theta)\xi_\varepsilon(t)|^\alpha)^{\widetilde{p}}]\\
		&\le L^{\widetilde{p}}(t)2^{\widetilde{p}-1}(1+\E[|\theta \xi_{\overline{u}}(t)+(1-\theta)\xi_\varepsilon(t)|^{\alpha\widetilde{p}}])\\
		&\le L^{\widetilde{p}}(t)2^{\widetilde{p}-1}(1+\E[|\theta \xi_{\overline{u}}(t)+(1-\theta)\xi_\varepsilon(t)|^{p}]^{\frac{\alpha \widetilde{p}}{p}}),
	\end{align*}
	where we also used the convexity of the function $x \mapsto |x|^{\widetilde{p}}$ and H\"older's inequality with the exponent $\frac{p}{\alpha \widetilde{p}}>1$. Moreover, by the properties of the solution map $\cS_0$ as in Corollary \ref{linear},
	\begin{equation*}
		\theta \xi_{\overline{u}}(t)+(1-\theta)\xi_\varepsilon(t)=\cS_0(\theta \overline{u}+(1-\theta)u_\varepsilon)(t)
	\end{equation*}
	and then, by Lemma \ref{lem:momest2}, we conclude
	\begin{align}\label{eq:I3est2}
		\begin{split}
		\E\left[\left|\pdsup{F}{\xi}{2}(t,\theta \xi_{\overline{u}}(t)+(1-\theta)\xi_\varepsilon(t))\right|^{\widetilde{p}}\right]&\le L^{\widetilde{p}}(t)\E[(1+|\theta \xi_{\overline{u}}(t)+(1-\theta)\xi_\varepsilon(t)|^\alpha)^{\widetilde{p}}]\\
		&\le L^{\widetilde{p}}(t)2^{\widetilde{p}-1}(1+\E[|\theta \xi_{\overline{u}}(t)+(1-\theta)\xi_\varepsilon(t)|^{\alpha\widetilde{p}}])\\
		&\le CL^{\widetilde{p}}(t).
		\end{split}
	\end{align}
	On the other hand, by Equation \eqref{eq:mometa} we know that
	\begin{equation}\label{eq:I3est1}
		\left(\E\left[\left|\eta_\varepsilon(t)\right|^{2\widetilde{q}}\right]\right)^{\frac{1}{\widetilde{q}}}\le K\left(\int_{I_\varepsilon}|u-\bar{u}(\tau)|d\tau\right)^2.
	\end{equation}
	Combining Equations \eqref{eq:I3est0}, \eqref{eq:I3est1} and \eqref{eq:I3est2} we obtain
	\begin{equation*}
		|I_3(\varepsilon)|\le \frac{C}{\varepsilon} \left(\int_{I_\varepsilon}|u-\bar{u}(\tau)|d\tau\right)^2\int_0^T\int_0^1\theta L(t)dt \le C\varepsilon \Norm{L}{L^1(0,T)}\left(\frac{1}{\varepsilon}\int_{I_\varepsilon}|u-\bar{u}(\tau)|d\tau\right)^2.
	\end{equation*}
	Taking the limit as $\varepsilon \to 0$ we conclude that $\lim_{\varepsilon \to 0}I_3(\varepsilon)=0$.\\
	Finally, we need to evaluate $\lim_{\varepsilon \to 0}I_2(\varepsilon)$. To do this, let us first show that we can use Fubini's theorem to exchange the order of expectation and Lebesgue integral. Indeed we have
	\begin{align*}
		\int_{t_0}^{T}\E\left[ \left|\pd{F}{\xi}(t,\xi_{\overline{u}}(t))\eta_\varepsilon(t)\right|\right]\mathbf{1}_{\left[t_0+\frac{\varepsilon}{2},T\right]}(t)dt&\le \int_{t_0}^{T}\left(\E\left[ \left|\pd{F}{\xi}(t,\xi_{\overline{u}}(t))\right|^{\widetilde{p}}\right]\right)^{\frac{1}{\widetilde{p}}}\E\left[\left|\eta_\varepsilon(t)\right|^{\widetilde{q}}\right]^{\frac{1}{\widetilde{q}}}dt\\
		&\le C\left(\int_{I_\varepsilon}|u-\bar{u}(\tau)|d\tau\right) \Norm{L}{L^1(0,T)}\\
		&\le C \Norm{u-\bar{u}}{L^1(0,T)}\Norm{L}{L^1(0,T)},
	\end{align*}
	where, for any $B\subseteq [0,T]$, $\mathbf{1}_{B}$ is the indicator function of the set B and we used again Equations \eqref{eq:I1est1} and \eqref{eq:I1est2}. Hence, by Fubini's theorem, we have
	\begin{align}\label{eq:I2est0}
		\begin{split}
			I_2(\varepsilon)&=-\frac{1}{\varepsilon}\int_{t_0}^{T}\E\left[ \pd{F}{\xi}(t,\xi_{\overline{u}}(t))\eta_\varepsilon(t)\right]\mathbf{1}_{\left[t_0+\frac{\varepsilon}{2},T\right]}(t)dt\\
			&=-\int_{t_0}^{T}\E\left[ \mathbf{1}_{\left[t_0+\frac{\varepsilon}{2},T\right]}(t)\pd{F}{\xi}(t,\xi_{\overline{u}}(t))G(t)e^{A(t)}\frac{1}{\varepsilon}\int_{I_\varepsilon}\frac{e^{-A(s)}}{G(s)}[u-\overline{u}(s)]ds\right]dt,
		\end{split}
	\end{align}
	where we explicitly wrote $\eta_\varepsilon(t)$.
	Now let us show that we can take the limit inside both the integral and the expectation sign. To do this, we want to use dominated convergence theorem. Let us observe that
	\begin{align*}\label{eq:I2est10}
		\begin{split}
			&\left|\mathbf{1}_{\left[t_0+\frac{\varepsilon}{2},T\right]}(t)\pd{F}{\xi}(t,\xi_{\overline{u}}(t))G(t)e^{A(t)}\frac{1}{\varepsilon}\int_{I_\varepsilon}\frac{e^{-A(s)}}{G(s)}[u-\overline{u}(s)]ds\right|\\
			&\qquad \qquad \le e^T\left(\sup_{t \in [0,T]}e^{2A(t)}\right)\left(\sup_{t \in [0,T]}G(t)\right)\left(\sup_{t \in [0,T]}G'(t)\right)\left(\frac{1}{\varepsilon}\int_{I_\varepsilon}|u-\bar{u}(s)|ds\right)\left|\pd{F}{\xi}(t,\xi_{\overline{u}}(t))\right|\\
			&\qquad \qquad =C\left(\sup_{t \in [0,T]}G(t)\right)\left(\sup_{t \in [0,T]}G'(t)\right)\left(\frac{1}{\varepsilon}\int_{I_\varepsilon}|u-\bar{u}(s)|ds\right)\left|\pd{F}{\xi}(t,\xi_{\overline{u}}(t))\right|.
		\end{split}
	\end{align*}
	In particular we have
	\begin{equation*}
		\lim_{\varepsilon \to 0}\left(\frac{1}{\varepsilon}\int_{I_\varepsilon}|u-\bar{u}(s)|ds\right)=|u-\bar{u}(t_0)|,
	\end{equation*}
	hence we can suppose $\varepsilon$ is small enough to have
	\begin{equation*}
		\left(\frac{1}{\varepsilon}\int_{I_\varepsilon}|u-\bar{u}(s)|ds\right)\le 2|u-\bar{u}(t_0)|.
	\end{equation*}
	This implies
	\begin{align}\label{eq:I2est1}
		\begin{split}
			&\left|\mathbf{1}_{\left[t_0+\frac{\varepsilon}{2},T\right]}(t)\pd{F}{\xi}(t,\xi_{\overline{u}}(t))G(t)e^{A(t)}\frac{1}{\varepsilon}\int_{I_\varepsilon}\frac{e^{-A(s)}}{G(s)}[u-\overline{u}(s)]ds\right|\\
			&\qquad \qquad \le C\left(\sup_{t \in [0,T]}G(t)\right)\left(\sup_{t \in [0,T]}G'(t)\right)\left|\pd{F}{\xi}(t,\xi_{\overline{u}}(t))\right|.
		\end{split}
	\end{align} 
	Now let us show that the stochastic process on the right-hand side is integrable. Observe that
	\begin{align*}
		&\E\left[\left(\sup_{t \in [0,T]}G(t)\right)\left(\sup_{t \in [0,T]}G'(t)\right)\left|\pd{F}{\xi}(t,\xi_{\overline{u}}(t))\right|\right] \\&\qquad \qquad \le \E\left[\left(\sup_{t \in [0,T]}G(t)\right)^{\widetilde{q}}\left(\sup_{t \in [0,T]}G'(t)\right)^{\widetilde{q}}\right]^{\frac{1}{\widetilde{q}}}\E\left[\left|\pd{F}{\xi}(t,\xi_{\overline{u}}(t))\right|^{\widetilde{p}}\right]^{\frac{1}{\widetilde{p}}}\le CL(t),
	\end{align*}
	where we used Equation \eqref{eq:I1est1} and Lemma \ref{lem:supest}.
Integrating on $[t_0,T]$ we conclude that
	\begin{equation*}
		\int_{t_0}^{T}\E\left[\left(\sup_{t \in [0,T]}G(t)\right)\left(\sup_{t \in [0,T]}G'(t)\right)\left|\pd{F}{\xi}(t,\xi_{\overline{u}}(t))\right|\right]\le C\Norm{L}{L^1(0,T)}.
	\end{equation*}
	We only need to show that the integrand in Equation \eqref{eq:I2est0} converges almost everywhere. Recalling that $t \mapsto G(t)$ is almost surely continuous, fix $\omega \in \Omega$ such that $G(\cdot, \omega)$ is a continuous function, then $t_0 \in E$ is a Lebesgue point for $\frac{e^{-A(t)}}{G(t,\omega)}[u-\overline{u}(t)]$ by Proposition \ref{prop_leb}. Hence we have
	\begin{multline*}
		\lim_{\varepsilon \to 0}\mathbf{1}_{\left[t_0+\frac{\varepsilon}{2},T\right]}(t)\pd{F}{\xi}(t,\xi_{\overline{u}}(t))G(t)e^{A(t)}\frac{1}{\varepsilon}\int_{I_\varepsilon}\frac{e^{-A(s)}}{G(s)}[u-\overline{u}(s)]ds\\=  \pd{F}{\xi}(t,\xi_{\overline{u}}(t))G(t)e^{A(t)}\frac{e^{-A(t_0)}}{G(t_0)}[u-\overline{u}(t_0)] \mbox{ a.s.}
	\end{multline*}
	Thus, by Dominated Convergence Theorem, we get
	\begin{equation*}
		\lim_{\varepsilon \to 0}I_2(\varepsilon)=-(u-\overline{u}(t_0))\int_{t_0}^T \E\left[\pd{F}{\xi}(t,\xi_{\overline{u}}(t))e^{A(t)-A(t_0)}\frac{G(t)}{G(t_0)}\right]dt.
	\end{equation*}
	In conclusion, from Equation \eqref{eq:passg2} we have
	\begin{equation}\label{eq:derg}
		\lim_{\varepsilon \to 0}\frac{g(\varepsilon)-g(0)}{\varepsilon}=-(u-\overline{u}(t_0))\int_{t_0}^T \E\left[\pd{F}{\xi}(t,\xi_{\overline{u}}(t))e^{A(t)-A(t_0)}\frac{G(t)}{G(t_0)}\right]dt.
	\end{equation}
However, by definition of $\overline{u}$, we know that $0$ is a minimum point for $g$, and then 
\begin{equation*}
\lim_{\varepsilon \rightarrow 0} \frac{g(\varepsilon)-g(0)}{\varepsilon}\geq 0,
\end{equation*}
that implies 
\begin{equation*}
u\int_{t_0}^T \E\left[\pd{F}{\xi}(t,\xi_{\overline{u}}(t))e^{A(t)-A(t_0)}\frac{G(t)}{G(t_0)}\right]dt \leq \overline{u}(t_0)\int_{t_0}^T \E\left[\pd{F}{\xi}(t,\xi_{\overline{u}}(t))e^{A(t)-A(t_0)}\frac{G(t)}{G(t_0)}\right]dt.
\end{equation*}
Define $H(u):=u\int_{t_0}^T \E\left[\pd{F}{\xi}(t,\xi_{\overline{u}}(t))e^{A(t)-A(t_0)}\frac{G(t)}{G(t_0)}\right]dt$ and observe that $\overline{u}(t_0)$ is a maximum point for $H(u)$. Hence,  by Fermat's theorem, we get $H^\prime(\overline{u}(t_0))=0$, that is to say,
being $t_0 \in E$ arbitrary,
	\begin{equation}\label{Eulp1}
		\int_{t_0}^T \E\left[\pd{F}{\xi}(t,\xi_{\overline{u}}(t))e^{A(t)-A(t_0)}\frac{G(t)}{G(t_0)}\right]dt=0, \ \forall t_0 \in E.
	\end{equation}
Now we want to extend $E$ to the whole interval $[0,T]$. Let us show that
\begin{equation}\label{intfun}
	t_0 \in [0,T] \to \int_{t_0}^T \E\left[\pd{F}{\xi}(t,\xi_{\overline{u}}(t))e^{A(t)-A(t_0)}\frac{G(t)}{G(t_0)}\right]dt
\end{equation}
is continuous. To do this, consider $t_1 \in [0,T]$ and $t_2=t_1+\delta$ for some $\delta$ small enough to have $t_2 \in [0,T]$. To fix the ideas, let us suppose  $\delta>0$, since the arguments for $\delta<0$ are the same. We have
\begin{align*}
	&\left|\int_{t_1}^T \E\left[\pd{F}{\xi}(t,\xi_{\overline{u}}(t))e^{A(t)-A(t_1)}\frac{G(t)}{G(t_1)}\right]dt-\int_{t_2}^T \E\left[\pd{F}{\xi}(t,\xi_{\overline{u}}(t))e^{A(t)-A(t_2)}\frac{G(t)}{G(t_2)}\right]dt\right|\\
	&\qquad \le \int_{t_1}^{t_2} \E\left[\left|\pd{F}{\xi}(t,\xi_{\overline{u}}(t))e^{A(t)-A(t_1)}\frac{G(t)}{G(t_1)}\right|\right]dt\\
	&\qquad \qquad+\int_{t_2}^T \E\left[\left|\pd{F}{\xi}(t,\xi_{\overline{u}}(t))e^{A(t)-A(t_1)}\frac{G(t)}{G(t_1)}-\pd{F}{\xi}(t,\xi_{\overline{u}}(t))e^{A(t)-A(t_2)}\frac{G(t)}{G(t_2)}\right|\right]dt\\
	&\qquad :=I_4(\delta)+I_5(\delta).
\end{align*}
Let us consider $I_4(\delta)$. The exact same argument we considered for $I_2(\varepsilon)$ leads to
\begin{align*}
	\E\left[\left|\pd{F}{\xi}(t,\xi_{\overline{u}}(t))e^{A(t)-A(t_1)}\frac{G(t)}{G(t_1)}\right|\right]\le CL(t)
\end{align*}
and then
\begin{equation*}
I_4(\delta)\le C\int_{t_1}^{t_2}L(t)dt.
\end{equation*}
By absolute continuity of the Lebesgue integral, we have $\lim_{\delta \to 0}I_4(\delta)=0$.\\
Concerning $I_5(\delta)$, we have to use dominated convergence theorem. To do this, let us just observe that, as in Equation \eqref{eq:I2est1},
\begin{equation*}
	\left|\pd{F}{\xi}(t,\xi_{\overline{u}}(t))e^{A(t)-A(t_1)}\frac{G(t)}{G(t_1)}\right|\le C\left|\pd{F}{\xi}(t,\xi_{\overline{u}}(t))\right|\left(\sup_{t \in [0,T]}G(t)\right)\left(\sup_{t \in [0,T]}G'(t)\right)
\end{equation*}
where the right-hand side is independent of $t_1$ and integrable. The same can be done with $t_2$ and then, by triangular inequality, we have that the integrand in $I_5(\delta)$ is dominated. By dominated convergence theorem, since $G(t)$ is almost surely continuous, we have $\lim_{\delta \to 0}I_5(\delta)=0$.\\
Hence, the function in Equation \eqref{intfun} is continuous and, since $|[0,T]\setminus E|=0$ and thus $E$ is dense in $[0,T]$, we can extend Equation \eqref{Eulp1} to the whole interval $[0,T]$, concluding the proof. 
\end{proof}
\begin{rmk}\label{Gateaux}
	Let us observe that, with the same arguments, we can actually show that $J$ is Gateaux differentiable (see \cite{berger1977nonlinearity}) in any $u \in L^1(0,T)$ with Gateaux derivative given by
	\begin{equation*}
		\partial_u J[v]=-\int_0^T v(s)\int_s^T\E\left[\pd{F}{\xi}(t,\xi_{u}(t))e^{A(t)-A(s)}\frac{G(t)}{G(s)}\right]dtds, \ v \in L^1(0,T)
	\end{equation*}
and then Equation \eqref{eq:EL} can be restated as
\begin{equation*}
	\partial_{\bar{u}} J[v]=0, \ \forall v \in L^1(0,T).
\end{equation*}
From this point of view, Equation \eqref{eq:EL} is a consequence of Fermat's theorem applied directly on $J$. For this reason, we can refer to Equation \eqref{eq:EL} as the Euler-Lagrange equation for the functional $J$. In the same fashion, we can recognize $H$ as the Hamiltonian function of the cost functional $J$. \\
Moreover, let us stress out that, by the absolute continuity of Lebesgue's integral, for any $\delta>0$ there exists $\varepsilon_0>0$ such that for any $\varepsilon<\varepsilon_0$ it holds
\begin{equation*}
	\Norm{u_\varepsilon-\bar{u}}{L^1}=\int_{I_\varepsilon}|u-\bar{u}(\tau)|d\tau<\delta,
\end{equation*}
being $|I_\varepsilon|<\varepsilon_0$. Thus, defining the ball
\begin{equation*}
	B_\delta(\bar{u})=\{u \in L^1(0,T): \ \Norm{u-\bar{u}}{L^1}<\delta\}
\end{equation*}
we know that, for any fixed $\delta>0$, there exists $\varepsilon_0>0$ such that $u_\varepsilon \in B_\delta(\bar{u})$ for any $\varepsilon<\varepsilon_0$. Last observation leads to the fact that Theorem $3.1$ holds also for local minimizers of $J$, i.e. for functions $\bar{u}$ for which there exists $\delta>0$ such that
\begin{equation*}
	J[u]\ge J[\bar{u}], \ \forall u \in B_\delta(\bar{u}).
\end{equation*}
\end{rmk}
\section{Sufficient optimality conditions}
In the previous section we obtained a necessary optimality condition given in terms of Equation \eqref{eq:EL}. Now we want to investigate whether such condition is also sufficient, i.e. any solution of Equation \eqref{eq:EL} is actually a minimizer of the functional $J$ on $L^1(0,T)$. To do this, we need some additional hypotheses:
\begin{itemize}
	\item[(\emph{H5})] For any fixed $t \in [0,T]$, the function $x \mapsto F(t,x)$ is convex.
	\item[(\emph{H5+})] For any fixed $t \in [0,T]$, the function $x \mapsto F(t,x)$ is strictly convex.
\end{itemize}
\begin{thm}\label{thm:suffcond}
	Suppose Hypotheses (H1) to (H5) are satisfied. Let $\bar{u} \in L^1(0,T)$ be a solution of Equation \eqref{eq:EL}. Then $\bar{u}$ is a global minimizer of $J$.
\end{thm}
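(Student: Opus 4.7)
The plan is to promote the convexity of $F$ given by (\emph{H5}) to a convexity statement for the whole cost functional $J$, and then combine it with the Gateaux differentiability of $J$ established in Remark \ref{Gateaux}. Once $J$ is known to be convex and $\bar{u}$ is a critical point, the conclusion will follow from the standard fact that every critical point of a convex functional is a global minimum.

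First I would check that $J$ is convex on $L^1(0,T)$. The linearity of $\cS_0$ from Corollary \ref{linear} gives, for any $u_1,u_2 \in L^1(0,T)$ and $\lambda \in [0,1]$,
\begin{equation*}
\xi_{\lambda u_1 + (1-\lambda)u_2}(t) = \cS_0(z)(t) - \lambda\, \cS_0(u_1)(t) - (1-\lambda)\, \cS_0(u_2)(t) = \lambda\, \xi_{u_1}(t) + (1-\lambda)\, \xi_{u_2}(t)
\end{equation*}
$\bP$-almost surely. Applying (\emph{H5}) pointwise in $(t,\omega)$ and then integrating in $t$ and taking the expectation, I get $J[\lambda u_1 + (1-\lambda) u_2] \le \lambda J[u_1] + (1-\lambda) J[u_2]$, so $J$ is convex on $L^1(0,T)$.

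Next, I would fix an arbitrary $u \in L^1(0,T)$, set $v := u - \bar{u}$, and consider the real-valued function $g(\theta) := J[\bar{u} + \theta v]$ on $[0,1]$. By the previous step $g$ is convex, and by Remark \ref{Gateaux} it is differentiable at $\theta = 0$ with $g'(0) = \partial_{\bar{u}} J[v]$. The Euler-Lagrange equation \eqref{eq:EL} translates exactly into $\partial_{\bar{u}} J[v] = 0$ for every $v \in L^1(0,T)$, so $g'(0) = 0$. The first-order inequality $g(1) \ge g(0) + g'(0)\cdot 1$ for a convex function on $[0,1]$ then gives $J[u] \ge J[\bar{u}]$, and arbitrariness of $u$ concludes the argument.

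I do not expect any serious obstacle here: convexity of $J$ transfers directly from that of $F$ via the linearity of $\cS_0$, and combining convexity with a vanishing Gateaux derivative is textbook. The only point deserving some care is the appeal to Remark \ref{Gateaux}, which guarantees that $\partial_{\bar{u}} J[v]$ is available along every direction $v \in L^1(0,T)$ and not merely along the needle variations used explicitly inside the proof of Theorem \ref{thm:EL}.
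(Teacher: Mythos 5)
Your argument is correct, but it is not the route the paper's own proof takes: the paper proves Theorem \ref{thm:suffcond} directly, by applying the pointwise convexity inequality $F(t,\xi_u(t))-F(t,\xi_{\bar{u}}(t))\ge \pd{F}{\xi}(t,\xi_{\bar{u}}(t))(\xi_u(t)-\xi_{\bar{u}}(t))$, writing $\xi_u-\xi_{\bar{u}}=\cS_0(\bar{u}-u)$ explicitly through the kernel $e^{A(t)-A(s)}G(t)/G(s)$, and then spending most of the proof justifying the Fubini exchange via the H\"older/moment estimates of Lemmas \ref{lem:supest} and \ref{lem:momest2} so that the inner integral against $\bar{u}-u$ can be pulled out and killed by Equation \eqref{eq:EL}. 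Your route --- first prove convexity of $J$ from (\emph{H5}) and the linearity of $\cS_0$, then combine it with the vanishing Gateaux derivative from Remark \ref{Gateaux} --- is precisely the alternative the authors themselves record in the remark following Proposition \ref{prop:convex} (``Theorem \ref{thm:suffcond} can be seen as a direct consequence of Proposition \ref{prop:convex} and Remark \ref{Gateaux}''), and your convexity computation matches the proof of Proposition \ref{prop:convex}. What the paper's direct proof buys is self-containedness: it never needs the full Gateaux differentiability of $J$ in arbitrary directions $v\in L^1(0,T)$, which Remark \ref{Gateaux} only asserts ``by the same arguments'' without writing out the estimates beyond needle variations. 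Your proof is cleaner and more modular, but if expanded to the same level of rigor it would have to reproduce essentially the same integrability estimates to justify that $\theta\mapsto J[\bar{u}+\theta v]$ has right derivative $\partial_{\bar{u}}J[v]$ at $0$; you correctly flag this as the one point needing care. A small additional remark: since $g(\theta)=J[\bar{u}+\theta v]$ is convex, you only need the one-sided derivative $g'(0^+)$, which always exists for convex functions, so the inequality $g(1)\ge g(0)+g'(0^+)$ is available as soon as the difference quotient is identified with $\partial_{\bar{u}}J[v]=0$.
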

\begin{proof}
	Let us consider $\bar{u} \in L^1(0,T)$ solution of Equation \eqref{eq:EL} and let $u \in L^1(0,T)$ be any other function. Then we have
	\begin{equation*}
		J[u]-J[\bar{u}]=\int_0^T\E[F(t,\xi_u(t))-F(t,\xi_{\bar{u}}(t))]dt
	\end{equation*}
	where we already used Fubini's theorem, by means of hypotheses (\emph{H2}). By hypothesis (\emph{H3}) and (\emph{H5}) we have
	\begin{align*}
		F(t,\xi_u(t))-F(t,\xi_{\bar{u}}(t))&\ge \pd{F}{\xi}(t,\xi_{\bar{u}}(t))(\xi_u(t)-\xi_{\bar{u}}(t))\\
	&=\pd{F}{\xi}(t,\xi_{\bar{u}}(t))\cS_0(\bar{u}-u)\\
	&=\pd{F}{\xi}(t,\xi_{\bar{u}}(t))G(t)e^{A(t)}\int_0^t\frac{e^{-A(s)}}{G(s)}(\bar{u}(s)-u(s))ds\\
	&=\int_0^t\pd{F}{\xi}(t,\xi_{\bar{u}}(t))e^{A(t)-A(s)}\frac{G(t)}{G(s)}(\bar{u}(s)-u(s))ds
	\end{align*}
and then
\begin{equation}\label{eq:beforeex}
	J[u]-J[\bar{u}]\ge \int_0^T\E\left[\int_0^t\pd{F}{\xi}(t,\xi_{\bar{u}}(t))e^{A(t)-A(s)}\frac{G(t)}{G(s)}(\bar{u}(s)-u(s))ds\right]dt.
\end{equation}
Now we want to exchange the order of the integrals. To do this, observe that
\begin{align*}
	\begin{split}
	&\left|\pd{F}{\xi}(t,\xi_{\bar{u}}(t))e^{A(t)-A(s)}\frac{G(t)}{G(s)}(\bar{u}(s)-u(s))\right|\\
	&\qquad \le \left|\pd{F}{\xi}(t,\xi_{\bar{u}}(t))\right|\left(\sup_{\tau_1,\tau_2 \in [0,T]}e^{A(\tau_1)-A(\tau_2)}\right)e^T\left(\sup_{\tau \in (0,T)}G(\tau)\right)\left(\sup_{\tau \in (0,T)}G'(\tau)\right)|\bar{u}(s)-u(s)|\\
	&\qquad =C\left|\pd{F}{\xi}(t,\xi_{\bar{u}}(t))\right|\left(\sup_{\tau \in (0,T)}G(\tau)\right)\left(\sup_{\tau \in (0,T)}G'(\tau)\right)|\bar{u}(s)-u(s)|.
	\end{split}
\end{align*}
Let us consider the process on the right-hand side of the previous inequality. Integrating with respect to $s$ and applying the expectation operator we have
\begin{align}\label{eq:Fubinipass2}
	\begin{split}
	&\E\left[\int_0^T\left|\pd{F}{\xi}(t,\xi_{\bar{u}}(t))\right|\left(\sup_{\tau \in (0,T)}G(\tau)\right)\left(\sup_{\tau \in (0,T)}G'(\tau)\right)|\bar{u}(s)-u(s)|ds\right]\\
	&\qquad =\Norm{\bar{u}-u}{L^1}\E\left[\left|\pd{F}{\xi}(t,\xi_{\bar{u}}(t))\right|\left(\sup_{\tau \in (0,T)}G(\tau)\right)\left(\sup_{\tau \in (0,T)}G'(\tau)\right)\right].
	\end{split}
\end{align}
Now let us fix $\widetilde{p} \in \left(1,\min\left\{2,\frac{p}{\alpha}\right\}\right)$, where $\alpha$ is defined in hypothesis (\emph{H4}), and $\widetilde{q}$ such that $\frac{1}{\widetilde{p}}+\frac{1}{\widetilde{q}}=1$. By H\"older's inequality we have
\begin{align*}
	&\E\left[\left|\pd{F}{\xi}(t,\xi_{\bar{u}}(t))\right|\left(\sup_{\tau \in (0,T)}G(\tau)\right)\left(\sup_{\tau \in (0,T)}G'(\tau)\right)\right]\\
	&\quad \qquad \le\E\left[\left|\pd{F}{\xi}(t,\xi_{\bar{u}}(t))\right|^{\widetilde{p}}\right]^{\frac{1}{\widetilde{p}}}\E\left[\left(\sup_{\tau \in (0,T)}G(\tau)\right)^{\widetilde{q}}\left(\sup_{\tau \in (0,T)}G'(\tau)\right)^{\widetilde{q}}\right]^{\frac{1}{\widetilde{q}}}\\
	&\quad \qquad \le CL(t)\E\left[(1+|\xi_{\bar{u}}(t)|^\alpha)^{\widetilde{p}}\right]^{\frac{1}{\widetilde{p}}}\\
	&\quad \qquad \le CL(t)2^{1-\frac{1}{\widetilde{p}}}(1+\E[|\xi_{\bar{u}}(t)|^{\alpha\widetilde{p}}]^{\frac{1}{\widetilde{p}}})\\
	&\quad \qquad \le CL(t)2^{1-\frac{1}{\widetilde{p}}}(1+\E[|\xi_{\bar{u}}(t)|^{p}]^{\frac{\alpha}{p}}),
\end{align*}
where we used hypothesis (\emph{H4}), Lemma \ref{lem:supest} and H\"older's inequality a second time with exponent $\frac{p}{\alpha \widetilde{p}}>1$. By Lemma \ref{lem:momest2} we conclude that
\begin{align*}
	\E\left[\left|\pd{F}{\xi}(t,\xi_{\bar{u}}(t))\right|\left(\sup_{\tau \in (0,T)}G(\tau)\right)\left(\sup_{\tau \in (0,T)}G'(\tau)\right)\right]
	 \le CL(t).
\end{align*}
Going back to Equation \eqref{eq:Fubinipass2} we have
\begin{equation*}
	\E\left[\int_0^T\left|\pd{F}{\xi}(t,\xi_{\bar{u}}(t))e^{A(t)-A(s)}\frac{G(t)}{G(s)}(\bar{u}(s)-u(s))\right|ds\right] \le C \Norm{\bar{u}-u}{L^1} L(t)
\end{equation*}
and then, integrating with respect to $t$,
\begin{equation*}
	\int_0^T\E\left[\int_0^T\left|\pd{F}{\xi}(t,\xi_{\bar{u}}(t))e^{A(t)-A(s)}\frac{G(t)}{G(s)}(\bar{u}(s)-u(s))\right|ds\right]dt \le C \Norm{\bar{u}-u}{L^1}\Norm{L}{L^1}.
\end{equation*}
Hence, we can use Fubini's theorem in Equation \eqref{eq:beforeex} to achieve
\begin{equation*}
	J[u]-J[\bar{u}]\ge \int_0^T (\bar{u}(s)-u(s))\int_s^T\E\left[\pd{F}{\xi}(t,\xi_{\bar{u}}(t))e^{A(t)-A(s)}\frac{G(t)}{G(s)}\right]dtds=0,
\end{equation*}
$\bar{u}$ being a solution of \eqref{eq:EL}. The fact that $u \in L^1$ is arbitrary concludes the proof.
\end{proof}
The previous result is strictly linked with the convexity hypothesis (\emph{H5}). Indeed, such hypothesis actually implies the convexity of the operator $J$.
\begin{prop}\label{prop:convex}
	Suppose Hypotheses (H1) to (H5) are satisfied. Then $J$ is convex. Moreover, if (H5+) is satisfied, $J$ is strictly convex.
\end{prop}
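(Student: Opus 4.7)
The plan is to deduce convexity of $J$ from the linearity of the solution map $\cS_0$ (established in Corollary \ref{linear}) combined with the pointwise convexity of $F(t,\cdot)$ in hypothesis (H5). For $u,v \in L^1(0,T)$ and $\lambda \in [0,1]$, I first observe that since $\lambda u + (1-\lambda) v$ is substituted for $z$, one has
\begin{equation*}
\xi_{\lambda u + (1-\lambda) v} = \cS_0\bigl(\lambda(z-u) + (1-\lambda)(z-v)\bigr) = \lambda \xi_u + (1-\lambda)\xi_v
\end{equation*}
pathwise, using linearity of $\cS_0$. Applying (H5) pointwise in $(t,\omega)$ yields
\begin{equation*}
F\bigl(t, \xi_{\lambda u + (1-\lambda) v}(t)\bigr) \le \lambda F(t,\xi_u(t)) + (1-\lambda) F(t,\xi_v(t)),
\end{equation*}
and taking expectations and integrating over $[0,T]$, which is legitimate since all three terms are finite by the non-triviality lemma in Section 2, gives $J[\lambda u + (1-\lambda) v] \le \lambda J[u] + (1-\lambda) J[v]$.

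For strict convexity under (H5+), the task is to ensure that whenever $u \neq v$ in $L^1(0,T)$ and $\lambda \in (0,1)$, the inequality above is strict on a set of positive $dt \otimes d\bP$ measure. I would argue as follows: using the explicit formula \eqref{eq:solutionmap},
\begin{equation*}
\xi_u(t) - \xi_v(t) = \cS_0(v-u)(t) = G(t) e^{A(t)} \int_0^t \frac{e^{-A(s)}}{G(s)}(v(s)-u(s))\,ds.
\end{equation*}
Since $u \neq v$ in $L^1(0,T)$, setting $\phi(s) = (v(s)-u(s)) e^{-A(s)}$ gives a non-zero element of $L^1(0,T)$. Fix $\omega$ in the full-measure set where $s \mapsto G(s,\omega)$ is continuous and strictly positive on $[0,T]$; then $s \mapsto \phi(s)/G(s,\omega)$ is a non-zero element of $L^1(0,T)$, so its indefinite integral $t \mapsto \int_0^t \phi(s)/G(s,\omega)\,ds$ cannot vanish identically. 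By continuity it is non-zero on an open subset of $[0,T]$ of positive Lebesgue measure (depending on $\omega$), and since $G(t,\omega) e^{A(t)} > 0$ there, the difference $\xi_u(t,\omega) - \xi_v(t,\omega)$ is non-zero on that open set. Fubini then ensures the set $\{(t,\omega): \xi_u(t,\omega) \neq \xi_v(t,\omega)\}$ has positive $dt \otimes d\bP$ measure.

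On this set, (H5+) gives strict inequality in the convexity bound on $F$, while elsewhere the non-strict bound holds; integrating yields $J[\lambda u + (1-\lambda) v] < \lambda J[u] + (1-\lambda) J[v]$. The main delicate point is the strict convexity argument, specifically verifying that $u \neq v$ in $L^1(0,T)$ forces $\xi_u \neq \xi_v$ on a set of positive product measure; this reduces to the injectivity of the indefinite integration operator acting on $\phi/G(\cdot,\omega)$ for a.e.\ $\omega$, which is precisely what the continuity of the Brownian path and positivity of $G$ deliver. The convexity part itself is essentially immediate once the linearity of $\cS_0$ is invoked.
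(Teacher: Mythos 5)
Your proof is correct and follows essentially the same route as the paper: linearity of $\cS_0$ plus pointwise convexity of $F$ for the convexity claim, and strict convexity of $F$ on a set of positive $dt\otimes d\bP$ measure where $\xi_{u}\neq\xi_{v}$ for the strict claim. The only difference is that you explicitly justify, via injectivity of the indefinite integration operator applied to $\phi/G(\cdot,\omega)$ along continuous positive paths of $G$, the fact that $u\neq v$ forces the gap processes to differ on such a set — a step the paper merely asserts ``by definition of the solution map'' — so your write-up is, if anything, more complete on that point.
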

\begin{proof}
	Let us consider $u_1,u_2 \in L^1(0,T)$ and $\theta \in [0,1]$. Then we have
	\begin{align*}
		J[\theta u_1+(1-\theta)u_2]&=\E\left[\int_0^T F(t,\cS_0(z-(\theta u_1+(1-\theta)u_2))(t))dt\right]
		\\&=\E\left[\int_0^T F(t,\theta\cS_0(z-u_1)(t)+(1-\theta)\cS_0(z-u_2)(t))dt\right]\\
		&\le \theta\E\left[\int_0^T F(t,\cS_0(z-u_1)(t))dt\right]+(1-\theta)\E\left[\int_0^T F(t,\cS_0(z-u_2)(t))dt\right]\\
		&=\theta J[u_1]+(1-\theta)J[u_2],
	\end{align*}
where we used hypothesis (\emph{H5}) and the third statement of Corollary \ref{linear}. This proves that $J$ is convex.
	Now let us suppose $u_1 \not = u_2$ (that is to say there exists a set $E \subseteq (0,T)$ with $|E|>0$ and $u_1 \not = u_2$ on $E$), $\theta \in (0,1)$ and (\emph{H5+}) holds. By definition of the solution map, there exists $\overline{\Omega}$ such that
	\begin{itemize}
		\item[I] $\bP(\overline{\Omega})>0$;
		\item[II] for any $\omega \in \overline{\Omega}$, $\cS_0(z-u_i)(\cdot, \omega)$ is continuous in $[0,T]$ for $i=1,2$;
		\item[III] for any $\omega \in \overline{\Omega}$, there exists $t(\omega)$ such that $\cS_0(z-u_1)(t(\omega),\omega)\not = \cS_0(z-u_2)(t(\omega),\omega)$.
	\end{itemize} 
	In particular, combining II and III we have that for any $\omega \in \overline{\Omega}$ there exists an interval $I(\omega)$ such that $\cS_0(z-u_1)(t,\omega)\not = \cS_0(z-u_2)(t,\omega)$ for any $t \in I(\omega)$. By using the third statement of Corollary \ref{linear} we get
	\begin{align*}
		J[\theta u_1+(1-\theta)u_2]&=\E\left[\int_0^T F(t,\cS_0(z-(\theta u_1+(1-\theta)u_2))(t))dt\right]
		\\&=\E\left[\int_0^T F(t,\theta\cS_0(z-u_1)(t)+(1-\theta)\cS_0(z-u_2)(t))dt\right]\\
		&=\E\left[\int_{I(\cdot)} F(t,\theta\cS_0(z-u_1)(t)+(1-\theta)\cS_0(z-u_2)(t))dt; \overline{\Omega}\right]\\
		&+\E\left[\int_{[0,T]\setminus I(\cdot)} F(t,\theta\cS_0(z-u_1)(t)+(1-\theta)\cS_0(z-u_2)(t))dt;\overline{\Omega}\right]\\
		&+\E\left[\int_0^T F(t,\theta\cS_0(z-u_1)(t)+(1-\theta)\cS_0(z-u_2)(t))dt;\Omega \setminus \overline{\Omega}\right],
	\end{align*}
where for any random variable $Z$ and any event $B \in \mathcal{F}$ we use the notation $\E[Z;B]=\E[Z\mathbf{1}_B]$. Now let us observe that by Hypothesis (\emph{H5+}) we have
\begin{align*}
	&F(t,\theta\cS_0(z-u_1)(t,\omega)+(1-\theta)\cS_0(z-u_2)(t,\omega))\\
	&\qquad \qquad\le \theta F(t,\cS_0(z-u_1)(t,\omega))+(1-\theta)F(t,\cS_0(z-u_2)(t,\omega)), \ \forall \omega \in \Omega, \ \forall t \in [0,T],
\end{align*}
and the inequality is strict for any $\omega \in \overline{\Omega}$ and $t \in I(\omega)$. Thus we get
\begin{align*}
	J[\theta u_1&+(1-\theta)u_2]<\theta\E\left[\int_{I(\cdot)} F(t,\cS_0(z-u_1)(t))dt; \overline{\Omega}\right]+(1-\theta)\E\left[\int_{I(\cdot)}F(t,\cS_0(z-u_2)(t))dt;\overline{\Omega}\right]\\
	&\qquad+\theta\E\left[\int_{[0,T]\setminus I(\cdot)} F(t,\cS_0(z-u_1)(t))dt; \overline{\Omega}\right]+(1-\theta)\E\left[\int_{[0,T] \setminus I(\cdot)}F(t,\cS_0(z-u_2)(t))dt;\overline{\Omega}\right]\\
	&\qquad+\theta\E\left[\int_{0}^T F(t,\cS_0(z-u_1)(t))dt;\Omega \setminus \overline{\Omega}\right]+(1-\theta)\E\left[\int_{0}^TF(t,\cS_0(z-u_2)(t))dt;\Omega \setminus \overline{\Omega}\right]\\
	&=\theta J[u_1]+(1-\theta)J[u_2],
\end{align*}
concluding the proof.
\end{proof}
\begin{rmk}
	Let us observe that Theorem \ref{thm:suffcond} can be seen as a direct consequence of Proposition \ref{prop:convex} and Remark \ref{Gateaux}, by using the inequality
	\begin{equation*}
		J[u]-J[\bar{u}]\ge \partial_{\bar{u}} J[u-\bar{u}],
	\end{equation*}
implied by the convexity of $J[u]$.
\end{rmk}
Another direct consequence of Proposition \ref{prop:convex} is given by the following Corollary.
\begin{cor}
	Suppose Hypotheses ({H1}) to (H5+) are satisfied. Then Equation \eqref{eq:EL} admits at most one solution.
\end{cor}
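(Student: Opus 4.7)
The plan is to combine Theorem \ref{thm:suffcond} with the strict convexity established in Proposition \ref{prop:convex}, and then invoke the standard fact that a strictly convex functional on a convex set has at most one minimizer.

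More precisely, suppose $\bar{u}_1, \bar{u}_2 \in L^1(0,T)$ are two solutions of Equation \eqref{eq:EL}. Since Hypotheses (H1) to (H5) are in particular satisfied, Theorem \ref{thm:suffcond} applies and gives that both $\bar{u}_1$ and $\bar{u}_2$ are global minimizers of $J$ over $L^1(0,T)$. In particular, $J[\bar{u}_1] = J[\bar{u}_2] =: m = \inf_{u \in L^1(0,T)} J[u]$.

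Assume, for contradiction, that $\bar{u}_1 \neq \bar{u}_2$ in $L^1(0,T)$. Since $L^1(0,T)$ is a vector space (hence convex), the midpoint $\frac{1}{2}\bar{u}_1 + \frac{1}{2}\bar{u}_2$ lies in $L^1(0,T)$. By Proposition \ref{prop:convex}, Hypothesis (H5+) guarantees that $J$ is strictly convex, so
\begin{equation*}
J\left[\tfrac{1}{2}\bar{u}_1 + \tfrac{1}{2}\bar{u}_2\right] < \tfrac{1}{2}J[\bar{u}_1] + \tfrac{1}{2}J[\bar{u}_2] = m,
\end{equation*}
contradicting the definition of $m$ as the infimum of $J$. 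Hence $\bar{u}_1 = \bar{u}_2$ in $L^1(0,T)$, and the Euler--Lagrange equation admits at most one solution.

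There is no real obstacle here: the only subtlety to double-check is that the strict inequality in Proposition \ref{prop:convex} genuinely requires $\bar{u}_1 \neq \bar{u}_2$ in $L^1$ (i.e., on a set of positive measure), which is exactly the hypothesis used in that proof, so the argument goes through verbatim.
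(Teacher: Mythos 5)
Your argument is correct and is essentially identical to the paper's proof: both invoke Theorem \ref{thm:suffcond} to conclude that any two solutions of Equation \eqref{eq:EL} are global minimizers of $J$, and then use the strict convexity from Proposition \ref{prop:convex} to deduce uniqueness of the minimizer. You merely spell out the standard midpoint argument that the paper leaves implicit, so the two proofs coincide.
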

\begin{proof}
	Let us suppose $\bar{u}_1$ and $\bar{u}_2$ are two solutions of Equation \eqref{eq:EL}. Then, by Theorem \ref{thm:suffcond} we know that both $\bar{u}_1$ and $\bar{u}_2$ are global minimizers of $J$. However, Proposition \ref{prop:convex} tells us that $J$ is strictly convex, hence it admits a unique global minimizer and then $\bar{u}_1=\bar{u}_2$.
\end{proof}
\section{Minimizing families}
Up to now we are not able to show that $J$ is coercive, which should be the main ingredient, together with lower semicontinuity, to prove the existence of a minimizer. This is due to the fact that, since $\xi_u(t)=\cS_0(z-u)(t)$ depends on a sort of primitive function of $u$, classical lower bounds such as $F(t,\xi)\ge L(1+|\xi|^p)$ are not enough to guarantee coercivity. For this reason, we focus instead on exploiting some minimizing families for $J$, i.e. a family of functions $\{u_{\delta}\}_{\delta>0}$ with the property that, for any $\varepsilon>0$, there exists $\delta_0>0$ such that if $\delta \in (0,\delta_0)$ it holds
\begin{equation*}
	m\le J[u_\delta]\le m+\varepsilon
\end{equation*}
where $m=\inf_{u \in L^1(0,T)}J[u]$.\\
First of all, we observe that $J$ is a continuous functional on $L^1(0,T)$.
\begin{prop}\label{prop:continuity}
	Let hypotheses (H1) to (H4) hold. Then $J:L^1(0,T) \to \R$ is continuous, i.e. for any fixed $u_1 \in L^1(0,T)$, for any $\varepsilon>0$ there exists $r>0$ (possibly depending on $u_1$) such that
	\begin{equation*}
		\forall u_2 \in B_r(u_1), \ |J[u_2]-J[u_1]|<\varepsilon,
	\end{equation*}
where $B_r(u_1)=\{u_2 \in L^1(0,T): \ \Norm{u_2-u_1}{L^1(0,T)}<r\}$.
\end{prop}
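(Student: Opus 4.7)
The plan is to establish the stronger property that $J$ is locally Lipschitz on $L^1(0,T)$; this clearly implies continuity. The argument is essentially a first-order Taylor expansion of $F$ combined with the same H\"older-type estimates used in the proofs of Theorem \ref{thm:EL} and Theorem \ref{thm:suffcond}. Fix $u_1 \in L^1(0,T)$ and $r>0$, and consider an arbitrary $u_2 \in B_r(u_1)$.

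By Fubini (justified by the finiteness of $J[u_1]$ and $J[u_2]$ proved in the previous section),
\begin{equation*}
J[u_2]-J[u_1]=\int_0^T \E\left[F(t,\xi_{u_2}(t))-F(t,\xi_{u_1}(t))\right]dt.
\end{equation*}
Applying the Fundamental Theorem of Calculus to $\theta \in [0,1] \mapsto F(t,\xi_{u_1}(t)+\theta(\xi_{u_2}(t)-\xi_{u_1}(t)))$ gives the pointwise identity
\begin{equation*}
F(t,\xi_{u_2}(t))-F(t,\xi_{u_1}(t))=(\xi_{u_2}(t)-\xi_{u_1}(t))\int_0^1 \pd{F}{\xi}\bigl(t,\xi_{u_1}(t)+\theta(\xi_{u_2}(t)-\xi_{u_1}(t))\bigr)d\theta.
\end{equation*}
By the affinity of $\cS_0$ established in Corollary \ref{linear}, $\xi_{u_2}-\xi_{u_1}=\cS_0(u_1-u_2)$, so the explicit representation \eqref{eq:solutionmap} yields the pathwise bound
\begin{equation*}
|\xi_{u_2}(t)-\xi_{u_1}(t)|\le C \left(\sup_{t\in[0,T]}G(t)\right)\left(\sup_{t\in[0,T]}G'(t)\right) \Norm{u_1-u_2}{L^1(0,T)},
\end{equation*}
with $C$ depending only on $\Norm{a}{L^\infty(0,T)}$ and $T$.

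Next I would apply H\"older's inequality with the conjugate pair $\widetilde{p}\in\bigl(1,\min\{2,p/\alpha\}\bigr)$ and $\widetilde{q}$, exactly as in the treatment of $I_2(\varepsilon)$ in Theorem \ref{thm:EL}. Hypothesis (\emph{H4}) together with Lemma \ref{lem:momest2} bounds the $\widetilde{p}$-moment of the $\pd{F}{\xi}$ term by $C_r L(t)$, where $C_r$ can be taken \emph{uniform} for $u_2\in B_r(u_1)$ because the polynomial moment bound provided by Lemma \ref{lem:momest2} depends on $\Norm{u_2}{L^1(0,T)}\le\Norm{u_1}{L^1(0,T)}+r$. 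The $\widetilde{q}$-moment of the product of suprema of $G$ and $G'$ is finite by Lemma \ref{lem:supest}. Combining these estimates and integrating in $\theta$ and $t$ produces a Lipschitz-type bound
\begin{equation*}
|J[u_2]-J[u_1]|\le C_r\Norm{L}{L^1(0,T)}\Norm{u_1-u_2}{L^1(0,T)},
\end{equation*}
from which continuity follows by first fixing some $r_0>0$, determining $C_{r_0}$, and then shrinking $r$ until the right-hand side falls below $\varepsilon$.

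The only mildly technical point is verifying that the constants produced by Lemma \ref{lem:momest2} can indeed be chosen uniformly on the ball $B_r(u_1)$; this is immediate since those constants depend polynomially on $\Norm{u_2}{L^1(0,T)}$, which is controlled by $\Norm{u_1}{L^1(0,T)}+r$. All remaining estimates are direct analogues of those already developed in Sections 3 and 4 and present no new difficulties.
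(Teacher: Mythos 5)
Your argument is correct and follows essentially the same route as the paper's proof: the fundamental-theorem-of-calculus expansion of $F$, H\"older's inequality with the exponents $\widetilde{p}\in\left(1,\min\{2,p/\alpha\}\right)$ and $\widetilde{q}$, and a moment bound for $\pd{F}{\xi}$ that is uniform on $B_r(u_1)$ because it depends only on $\Norm{u_2}{L^1(0,T)}\le\Norm{u_1}{L^1(0,T)}+r$. The only (immaterial) difference is that you control the factor $\xi_{u_2}-\xi_{u_1}$ by a pathwise bound through $\sup G$, $\sup G'$ and Lemma \ref{lem:supest}, whereas the paper applies Lemma \ref{lem:momest2} directly to obtain $\E[|\cS_0(u_2-u_1)(t)|^{\widetilde{q}}]^{\frac{1}{\widetilde{q}}}\le C\Norm{u_2-u_1}{L^1(0,T)}$; both yield the same local Lipschitz estimate.
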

\begin{proof}
	Fix $\varepsilon>0$ and let $u_1,u_2 \in L^1(0,T)$ with $\Norm{u_1-u_2}{L^1(0,T)}<r$, where $r$ will be defined in what follows. Then
	\begin{align}\label{eq:contpass1}
		\begin{split}
		|J[u_2]-J[u_1]|&\le \E\left[\int_0^T|F(t,\xi_{u_2}(t))-F(t,\xi_{u_1}(t))|dt\right]\\
		&\le \E\left[\int_0^T\int_0^1 \left|\pd{F}{\xi}(t,\theta\xi_{u_2}(t)+(1-\theta)\xi_{u_1}(t))\right||\xi_{u_2}(t)-\xi_{u_1}(t)|dt\right],
		\end{split}
	\end{align}
	where we used hypothesis (\emph{H3}). Let us consider $p$ as in (\emph{H1}) and $\alpha$ as in (\emph{H4}) and, let $\widetilde{p} \in \left(1,  \min\left\{2,\frac{p}{\alpha}\right\}\right)$ and $\widetilde{q}$ such that $\frac{1}{\widetilde{p}}+\frac{1}{\widetilde{q}}=1$. By H\"older's inequality we have
	\begin{multline}\label{eq:contpass2}
		\E\left[\left|\pd{F}{\xi}(t,\theta\xi_{u_2}(t)+(1-\theta)\xi_{u_1}(t))\right||\xi_{u_2}(t)-\xi_{u_1}(t)|\right] \\\le \E\left[\left|\pd{F}{\xi}(t,\theta\xi_{u_2}(t)+(1-\theta)\xi_{u_1}(t))\right|^{\widetilde{p}}\right]^{\frac{1}{\widetilde{p}}}\E[|\xi_{u_2}(t)-\xi_{u_1}(t)|^{\widetilde{q}}]^\frac{1}{\widetilde{q}}.
	\end{multline}
	Concerning the first term, we have, by hypothesis (\emph{H4})
	\begin{align*}
		\E\left[\left|\pd{F}{\xi}(t,\theta\xi_{u_2}(t)+(1-\theta)\xi_{u_1}(t))\right|^{\widetilde{p}}\right]^{\frac{1}{\widetilde{p}}}&\le L(t)\E[(1+|\theta\xi_{u_2}(t)+(1-\theta)\xi_{u_1}(t)|^{\alpha})^{\widetilde{p}}]^{\frac{1}{\widetilde{p}}}\\
		&\le 2^{1-\frac{1}{\widetilde{p}}}L(t)(1+\E[|\theta\xi_{u_2}(t)+(1-\theta)\xi_{u_1}(t)|^{\alpha \widetilde{p}}]^{\frac{1}{\widetilde{p}}})\\
		&\le 2^{1-\frac{1}{\widetilde{p}}}L(t)(1+\E[|\theta\xi_{u_2}(t)+(1-\theta)\xi_{u_1}(t)|^{p}]^{\frac{\alpha}{p}}),
	\end{align*}
where we used again H\"older's inequality with exponent $\frac{p}{\alpha\widetilde{p}}>1$. Next, since $\xi_{u_i}=\cS_0(z-u_i)$, by Corollary \ref{linear} and Lemma \ref{lem:momest} we get
\begin{align*}
	\E&\left[\left|\pd{F}{\xi}(t,\theta\xi_{u_2}(t)+(1-\theta)\xi_{u_1}(t))\right|^{\widetilde{p}}\right]^{\frac{1}{\widetilde{p}}}\le 2^{1-\frac{1}{\widetilde{p}}}L(t)(1+\E[|S_0(z-\theta u_2-(1-\theta)u_1)|^{p}]^{\frac{\alpha}{p}})\\
	&\le C L(t)\left(1+\left(\int_0^T\E[|z(t)-(\theta u_2+(1-\theta)u_1)|^p]^{\frac{1}{p}}\right)^\alpha\right)\\
	&\le C L(t)\left(1+2^{\alpha-\frac{\alpha}{p}}\left(\int_0^T\E[|z(t)|^p]^{\frac{1}{p}}dt+\int_0^T|\theta u_2(t)+(1-\theta)u_1(t)|dt\right)^\alpha\right)\\
	&\le C L(t)\left(1+2^{2\alpha-\frac{\alpha}{p}-1}\left(\left(\int_0^T\E[|z(t)|^p]^{\frac{1}{p}}dt\right)^{\alpha}+\left(\int_0^T|\theta u_2(t)+(1-\theta)u_1(t)|dt\right)^\alpha\right)\right)\\
	&\le CL(t)\left(1+\left(\int_0^T|\theta u_2(t)+(1-\theta)u_1(t)|dt\right)^\alpha\right)\\
	&\le CL(t)\left(1+\theta\Norm{u_2}{L^1(0,T)}^\alpha+(1-\theta)\Norm{u_1}{L^1(0,T)}^\alpha\right)
\end{align*}
where $C$ is independent of $u_1$ and $u_2$, and we used the fact that $\left(\int_0^T\E[|z(t)|^p]^{\frac{1}{p}}dt\right)^\alpha<+\infty$ and that \linebreak $t\ge 0 \mapsto t^\alpha$ is a convex function. 
We can assume, without loss of generality, that $r\leq 1$. Since $\Norm{u_2-u_1}{L^1(0,T)}<r\leq 1$, we achieve
\begin{align}\label{eq:contpass3}
	\begin{split}
	\E&\left[\left|\pd{F}{\xi}(t,\theta\xi_{u_2}(t)+(1-\theta)\xi_{u_1}(t))\right|^{\widetilde{p}}\right]^{\frac{1}{\widetilde{p}}}\\&\le CL(t)\left(1+\theta2^{\alpha-1}\Norm{u_2-u_1}{L^1(0,T)}^\alpha+(1-\theta+2^{\alpha-1})\Norm{u_1}{L^1(0,T)}^\alpha\right)\\
	&< CL(t)\left(1+\Norm{u_1}{L^1(0,T)}^\alpha\right).
\end{split}
\end{align}
On the other hand, still recalling that $\xi_{u_i}=\cS_0(z-u_i)$ and by Corollary \ref{linear} and Lemma \ref{lem:momest2}, we get
\begin{equation}\label{eq:contpass4}
	\E[|\xi_{u_2}(t)-\xi_{u_1}(t)|^{\widetilde{q}}]^{\frac{1}{\widetilde{q}}}=\E[|\cS_0(u_2-u_1)(t)|^{\widetilde{q}}]^{\frac{1}{\widetilde{q}}} \le C\Norm{u_2-u_1}{L^1(0,T)}<Cr. 
\end{equation}
Combining Equations \eqref{eq:contpass3} and \eqref{eq:contpass4} in Equation \eqref{eq:contpass2} we get
\begin{equation*}
	\E\left[\left|\pd{F}{\xi}(t,\theta\xi_{u_2}(t)+(1-\theta)\xi_{u_1}(t))\right||\xi_{u_2}(t)-\xi_{u_1}(t)|\right]<CL(t)\left(1+\Norm{u_1}{L^1(0,T)}^\alpha\right)r.
\end{equation*}
Using the previous inequality in \eqref{eq:contpass1} together with Fubini's theorem, since the integrand is non-negative, we know there exists a constant $\overline{C}>0$, independent of $u_1$ and $u_2$, such that
\begin{equation*}
	|J[u_2]-J[u_1]|< \overline{C}\left(1+\Norm{u_1}{L^1(0,T)}^\alpha\right)r,
\end{equation*}
Setting
\begin{equation*}
	r=\min\left\{1,\frac{\varepsilon}{\overline{C}\left(1+\Norm{u_1}{L^1(0,T)}^\alpha\right)}\right\}
\end{equation*}
we conclude the proof.
\end{proof}
\begin{rmk}
	If, moreover, Hypothesis (\emph{H5}) holds, then $J$ is also weakly lower semicontinuous in $L^1$. This is a direct consequence of Mazur's theorem \cite[Theorem $3.9$]{dacorogna2007direct}.
\end{rmk}
Let us introduce the set of simple functions:
\begin{equation*}
	\fS=\left\{u \in L^1(0,T): \ \exists N \in \N, \{B_i\}_{i \le N}\subseteq \cB([0,T]), \ \{b_i\}_{i \le N}\subseteq \R, \ u=\sum_{i=1}^{N}b_i\mathbf{1}_{B_i}\right\},
\end{equation*}
where $\cB([0,T])$ is the Borel $\sigma$-algebra on the interval $[0,T]$. As a direct consequence of Proposition \ref{prop:continuity} we have the following Corollary.
\begin{cor}\label{cor:simple}
	Let hypotheses (H1) to (H4) hold. Then
	\begin{equation*}
		\inf_{u \in L^1(0,T)}J[u]=\inf_{u \in \fS}J[u].
	\end{equation*}
\end{cor}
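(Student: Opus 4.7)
The plan is to prove the two inequalities separately, with one direction being immediate from set inclusion and the other following from the continuity of $J$ combined with the density of simple functions in $L^1(0,T)$.

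First, since $\fS \subset L^1(0,T)$, passing to the infimum over a smaller set can only increase its value, so the inequality
\begin{equation*}
\inf_{u \in L^1(0,T)} J[u] \le \inf_{u \in \fS} J[u]
\end{equation*}
is immediate and requires no work.

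For the reverse inequality, I would argue by approximation. Let $m = \inf_{u \in L^1(0,T)} J[u]$, which is finite by the non-triviality lemma and hypothesis (\emph{H2}). Fix an arbitrary $\varepsilon > 0$ and choose $u^* \in L^1(0,T)$ with $J[u^*] < m + \varepsilon/2$. By Proposition \ref{prop:continuity}, $J$ is continuous at $u^*$ in the $L^1$-norm topology, so there exists $r > 0$ such that $|J[v] - J[u^*]| < \varepsilon/2$ for every $v$ with $\Norm{v - u^*}{L^1(0,T)} < r$. Since simple functions are dense in $L^1(0,T)$ (a standard measure-theoretic fact), I can choose $v^* \in \fS$ with $\Norm{v^* - u^*}{L^1(0,T)} < r$, and then
\begin{equation*}
\inf_{u \in \fS} J[u] \le J[v^*] < J[u^*] + \frac{\varepsilon}{2} < m + \varepsilon.
\end{equation*}
Letting $\varepsilon \to 0^+$ yields $\inf_{\fS} J \le m$, which combined with the first inequality gives the equality.

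There is no genuine obstacle here: the work has already been done in Proposition \ref{prop:continuity}, and the corollary is essentially a formal consequence of continuity plus density. The only mild subtlety is that one must verify that the $L^1$-density of simple functions (typically proved for finite-measure spaces by approximating integrable functions by their truncations and then by simple functions built on measurable level sets) applies to $L^1(0,T)$, which is standard since $(0,T)$ has finite Lebesgue measure.
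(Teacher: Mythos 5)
Your argument is correct and is exactly the paper's proof, merely written out in full: the paper also deduces the corollary from the continuity of $J$ (Proposition \ref{prop:continuity}) together with the density of $\fS$ in $L^1(0,T)$. The expanded $\varepsilon$/$r$ bookkeeping you supply is the standard unpacking of that one-line argument.
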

\begin{proof}
	It follows from the fact that $J:L^1(0,T) \to \R$ is continuous (by Propostion \ref{prop:continuity}) and the fact that $\fS$ is dense in $L^1(0,T)$ (see, for instance, \cite[Theorem $3.13$]{rudin1987real}).
\end{proof}
Now we want to \textit{penalize} our functional $J$ to obtain a coercive functional. To do this, let us first give the following definition.
\begin{defn}
	A function $\Psi:[0,+\infty) \to [0,+\infty)$ is called a Young function (see \cite{pick2012function}) if there exists a function $\psi:[0,+\infty) \to [0,+\infty)$ such that
	\begin{equation*}
		\Psi(t)=\int_0^t\psi(s)ds, \ t \ge 0
	\end{equation*}
and $\psi$ satisfies the following properties:
\begin{itemize}
	\item $\psi(0)=0$;
	\item $\psi(s)>0$ for any $s>0$;
	\item $\psi$ is right-continuous;
	\item $\psi$ is non-decreasing;
	\item $\lim_{s \to +\infty}\psi(s)=+\infty$.
\end{itemize}
\end{defn}
Young functions satisfy different important properties. Here we recall some of them (see \cite[Lemma $4.2.2$]{pick2012function}).
\begin{lem}
	Any Young function $\Psi$ is continuous, non-negative, strictly increasing and convex. Moreover it holds $\Psi(0)=0$, $\lim_{t \to 0^+}t^{-1}\Psi(t)=0$ and $\lim_{t \to +\infty}t^{-1}\Psi(t)=+\infty$.
\end{lem}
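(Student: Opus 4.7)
The plan is to verify each of the six claimed properties by exploiting directly the representation $\Psi(t) = \int_0^t \psi(s)\,ds$ and the listed properties of $\psi$, since $\Psi$ inherits all its regularity and monotonicity from $\psi$. I would organize the proof into the properties of $\Psi$ itself (continuity, non-negativity, strict monotonicity, convexity, vanishing at $0$) and the asymptotic behavior (the two limits).

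First I would dispose of the easy items. The value $\Psi(0) = 0$ follows by definition, and non-negativity follows since $\psi \ge 0$. For continuity, I would note that because $\psi$ is non-decreasing and finite-valued, it is bounded on every compact subinterval of $[0,+\infty)$, hence locally integrable; the indefinite integral of a locally integrable function is absolutely continuous, in particular continuous on $[0,+\infty)$. For strict monotonicity, given $0 \le a < b$, I would choose any $c \in (a,b)$ if $a = 0$ and set $c = a$ otherwise, so that $c>0$; then by monotonicity of $\psi$ I get $\psi(s) \ge \psi(c) > 0$ on $[c,b]$, whence
\begin{equation*}
\Psi(b) - \Psi(a) \ge \int_c^b \psi(s)\,ds \ge \psi(c)(b-c) > 0.
\end{equation*}

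Convexity is the most structural step: I would prove the three-chord inequality directly. For $0 \le a < b < c$, monotonicity of $\psi$ gives $\psi(s) \le \psi(b)$ on $[a,b]$ and $\psi(s) \ge \psi(b)$ on $[b,c]$, so
\begin{equation*}
\frac{\Psi(b)-\Psi(a)}{b-a} \;=\; \frac{1}{b-a}\int_a^b \psi(s)\,ds \;\le\; \psi(b) \;\le\; \frac{1}{c-b}\int_b^c \psi(s)\,ds \;=\; \frac{\Psi(c)-\Psi(b)}{c-b},
\end{equation*}
which is equivalent to convexity of $\Psi$.

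Finally, for the asymptotic behavior, I would use the squeeze coming from the monotonicity of $\psi$. For $\lim_{t\to 0^+} t^{-1}\Psi(t) = 0$, right-continuity of $\psi$ at $0$ together with $\psi(0) = 0$ gives, for any $\varepsilon > 0$, some $\delta > 0$ with $\psi(s) < \varepsilon$ on $[0,\delta)$; therefore $\Psi(t)/t \le \varepsilon$ for $t \in (0,\delta)$. For $\lim_{t\to +\infty} t^{-1}\Psi(t) = +\infty$, given $M>0$ pick $s_0$ with $\psi(s) \ge 2M$ for $s \ge s_0$; then for $t > 2s_0$,
\begin{equation*}
\Psi(t) \;\ge\; \int_{s_0}^t \psi(s)\,ds \;\ge\; 2M(t-s_0) \;\ge\; M t,
\end{equation*}
so $\Psi(t)/t \ge M$. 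I do not expect any serious obstacle here; the only mild subtlety is making sure that the strict positivity at $t>0$ and the $0$-limit are reconciled by using $\psi(0)=0$ together with $\psi(s)>0$ for $s>0$, which is exactly what the definition provides.
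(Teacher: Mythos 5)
Your proof is correct. Note that the paper does not actually prove this lemma: it is quoted verbatim from the literature (Lemma 4.2.2 of the cited monograph on function spaces by Pick et al.), so there is no in-paper argument to compare against. Your direct verification from the integral representation $\Psi(t)=\int_0^t\psi(s)\,ds$ is the standard one and all steps check out: local boundedness of the non-decreasing $\psi$ gives local integrability and hence (absolute) continuity of $\Psi$; the choice of an interior point $c>0$ with $\psi(c)>0$ correctly handles strict monotonicity including the case $a=0$; the three-chord inequality $\frac{\Psi(b)-\Psi(a)}{b-a}\le\psi(b)\le\frac{\Psi(c)-\Psi(b)}{c-b}$ is indeed equivalent to convexity; and the two limits follow from right-continuity of $\psi$ at $0$ with $\psi(0)=0$ and from $\psi(s)\to+\infty$, respectively, exactly as you argue (the factor $2M$ together with $t>2s_0$ cleanly absorbs the $-s_0$ term). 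Nothing is missing.
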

\begin{rmk}\label{remark_u0}
Let us observe that if $\Psi$ is a Young function then the function $u \in \R \mapsto \Psi(|u|)$ is differentiable. Indeed this is clearly true for $u\neq 0$ with derivative
\begin{equation*}
\der{\Psi(|u|)}{u}=\frac{u}{|u|}\psi(|u|), \quad u\neq 0.
\end{equation*} 
Observing that $-1\leq \frac{u}{|u|}\leq 1$ and that, $\psi$ being right-continuous with $\psi(0)=0$, it holds $\lim_{u\rightarrow 0}\der{\Psi(|u|)}{u}=0$
and then $\Psi(|u|)$ is differentiable at $0$ with derivative $0$.
We will use the notation $\frac{u}{|u|}\psi(|u|)$ for any $u \in \R$, implying the $0$ value as $u=0$. 
\end{rmk}

Fix any Young function $\Psi$, $\delta>0$ and define the following functional
\begin{equation*}
	J_{\delta,\Psi}:L^1(0,T)\to J[u]+\delta \cF_{\Psi}[u]
\end{equation*}
where
\begin{equation*}
	\cF_\Psi[u]=\int_0^T \Psi(|u(t)|)dt.
\end{equation*}
\begin{rmk}
	There exist some $u \in L^1(0,T)$ such that $J_{\delta,\Psi}[u]=+\infty$ (see, for instance, \cite[Remark $4.2.4$]{pick2012function}). In particular we can define the Orlicz class
	\begin{equation*}
		\cL^\Psi(0,T)=\left\{u \in L^1(0,T): \ \int_0^T \Psi(|u(t)|)dt<+\infty\right\}
	\end{equation*}
	and observe that $J_{\delta,\Psi}[u]<+\infty$ if and only if $u \in \cL^\Psi(0,T)$. Let us stress out that $\cL^\Psi(0,T)$ is in general not a vector space. Actually, $\cL^\Psi(0,T)$ is a vector space if and only if $\Psi \in \Delta_2$, i.e. there exists a constant $k>0$ such that $\Psi(2t)\le k \Psi(t)$ for any $t\ge 0$ (see \cite[Theorem $4.5.3$]{pick2012function}). In particular, if $\Psi \in \Delta_2$, we have that $\cL^\Psi(0,T)=L^\Psi(0,T)$, where $L^\Psi(0,T)$ is defined as
	\begin{equation*}
		L^\Psi(0,T)=\left\{u \in L^1(0,T): \ \exists \lambda> 0: \ \int_0^T \Psi\left(\frac{|u(t)|}{\lambda}\right)dt<+\infty\right\}
	\end{equation*}
	and is a Banach space when equipped with the norm
	\begin{equation*}
		\Norm{u}{L^\Psi(0,T)}:=\inf\left\{\lambda>0: \int_0^T \Psi\left(\frac{|u(t)|}{\lambda}\right)dt\le 1\right\}.
	\end{equation*}
	See \cite[Chapter $4$]{pick2012function} for further details.
\end{rmk}
First of all, we want to show that for any $\delta>0$ and any Young function $\Psi$, the functional $J_{\delta,\Psi}$ admits a minimum. To do this, we need the following preliminary result.
\begin{prop}\label{prop:semicontF}
	For any Young function $\Psi \in \Delta_2$, the functional $\cF_\Psi:L^1(0,T) \mapsto \R \cup \{+\infty\}$ is weakly lower semicontinuous.
\end{prop}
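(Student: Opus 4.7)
The plan is a three-step application of standard convex analysis: establish convexity of $\cF_\Psi$, show strong $L^1$ lower semicontinuity via Fatou's lemma, then upgrade to weak lower semicontinuity via Mazur's theorem (the same tool already invoked in the remark after Proposition \ref{prop:convex}).

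First I would verify that $\cF_\Psi$ is convex on $L^1(0,T)$. By the preceding lemma any Young function $\Psi$ is convex and non-decreasing on $[0,+\infty)$; since $v \mapsto |v|$ is convex on $\R$, the composition $v \mapsto \Psi(|v|)$ is convex on $\R$. Integrating the pointwise midpoint inequality preserves convexity, so $\cF_\Psi$ is a convex functional with values in $\R\cup\{+\infty\}$.

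Next I would establish strong lower semicontinuity in $L^1$. Assume $u_n \to u$ in $L^1(0,T)$ and set $\ell:=\liminf_n \cF_\Psi[u_n]$. If $\ell=+\infty$ the inequality $\cF_\Psi[u]\le \ell$ is trivial; otherwise, choose a subsequence $(u_{n_k})$ along which $\cF_\Psi[u_{n_k}]\to \ell$, then extract a further subsequence $(u_{n_{k_j}})$ converging to $u$ almost everywhere on $(0,T)$. By continuity of $\Psi$ we have $\Psi(|u_{n_{k_j}}|)\to \Psi(|u|)$ a.e., and since $\Psi\ge 0$ Fatou's lemma yields
\begin{equation*}
\cF_\Psi[u]=\int_0^T \Psi(|u(t)|)\,dt \le \liminf_{j\to\infty}\int_0^T \Psi(|u_{n_{k_j}}(t)|)\,dt = \ell,
\end{equation*}
which is the desired strong lower semicontinuity.

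Finally, since $\cF_\Psi$ is convex and strongly lower semicontinuous on the Banach space $L^1(0,T)$, its epigraph is a convex, strongly closed subset of $L^1(0,T)\times\R$; by Mazur's theorem \cite[Theorem $3.9$]{dacorogna2007direct} it is therefore also weakly closed, which is equivalent to weak lower semicontinuity of $\cF_\Psi$. I do not anticipate any substantial obstacle in this scheme; the $\Delta_2$ hypothesis plays no role in the semicontinuity itself and is presumably kept so that the effective domain $\cL^\Psi(0,T)=L^\Psi(0,T)$ is a genuine Banach space for the coercivity/minimization arguments that follow.
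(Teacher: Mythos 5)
Your proposal is correct, and it reaches the same endpoint as the paper — convexity plus strong $L^1$ lower semicontinuity, upgraded to weak lower semicontinuity by Mazur's theorem — but the core step is argued differently. Where you pass to an a.e.\ convergent subsequence along the subsequence realizing the $\liminf$ and apply Fatou's lemma to the non-negative integrands $\Psi(|u_{n_{k_j}}|)$, the paper's Appendix~\ref{App:B} instead applies Egorov's theorem to get uniform convergence of $\Psi(y_{n_k})$ on a compact set $H$ with small complement, controls the mass on $[0,T]\setminus H$ via absolute continuity of the measure $\mu(A)=\int_A\Psi(y)\,dt$, and treats the case $\cF_\Psi[y]=+\infty$ separately by truncating on the sets $S_m=\{t:\Psi(y(t))\le m\}$ and passing to the limit by monotone convergence. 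Your Fatou argument is strictly more economical: it handles the finite and infinite cases uniformly and avoids both Egorov and the truncation device, which in the paper's proof essentially reconstruct by hand what Fatou delivers directly. Your closing remark is also accurate — the $\Delta_2$ condition is not used in either semicontinuity argument and serves only to make $\cL^\Psi(0,T)=L^\Psi(0,T)$ a Banach space for the later coercivity and minimization results.
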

The previous result relies on classical arguments in Calculus of Variation (see \cite{angrisani2019appunti,de2008semicontinuity,tonelli1921fondamenti}). We provide its proof in Appendix~\ref{App:B} for completeness.\\
Now we are ready to show that $J_{\delta,\Psi}$ admits a minimum.
\begin{thm}\label{thm:minim}
	Let $\delta>0$ and $\Psi \in \Delta_2$ be a Young function. Suppose hypotheses (H1) to (H5) hold. Then there exists a function $\bar{u}_{\delta,\Psi} \in L^1(0,T)$ such that
	\begin{equation*}
		\min_{u \in L^1(0,T)}J_{\delta,\Psi}[u]=J_{\delta,\Psi}[\bar{u}_{\delta,\Psi}].
	\end{equation*}
Moreover, if hypothesis (H5+) holds or if $\Psi$ is strictly convex, then $\bar{u}_{\delta,\Psi} \in L^1(0,T)$ is unique.
\end{thm}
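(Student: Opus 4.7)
The plan is to apply the direct method of the calculus of variations to $J_{\delta,\Psi}$, combining the coercivity produced by the Young function penalty with the weak lower semicontinuity results already established.

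First, I would pick a minimizing sequence $\{u_n\}\subset L^1(0,T)$ with $J_{\delta,\Psi}[u_n]\to m_\delta:=\inf_{u\in L^1(0,T)} J_{\delta,\Psi}[u]$. Note $m_\delta<+\infty$, since $J_{\delta,\Psi}[0] = J[0]+\delta\Psi(0)T = J[0]<+\infty$ by the non--triviality Lemma. Since $J\ge 0$ by (H2), the penalty term satisfies
\begin{equation*}
	\int_0^T \Psi(|u_n(t)|)\,dt \;=\; \cF_\Psi[u_n] \;\le\; \frac{1}{\delta}J_{\delta,\Psi}[u_n] \;\le\; M
\end{equation*}
for some $M>0$ and all $n$. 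Because $\Psi$ is a Young function, $\lim_{t\to +\infty}\Psi(t)/t=+\infty$, so the de la Vallée-Poussin/Dunford-Pettis criterion applies: the uniform bound on $\int_0^T\Psi(|u_n|)\,dt$ forces $\{u_n\}$ to be equi-integrable (and in particular bounded) in $L^1(0,T)$.

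Next, by the Dunford-Pettis theorem I would extract a subsequence (still denoted $u_n$) such that $u_n \rightharpoonup \bar{u}_{\delta,\Psi}$ weakly in $L^1(0,T)$ for some $\bar{u}_{\delta,\Psi}\in L^1(0,T)$. To pass to the limit, I would use weak lower semicontinuity of both pieces: by Proposition \ref{prop:convex} $J$ is convex, by Proposition \ref{prop:continuity} it is strongly continuous on $L^1(0,T)$, hence by Mazur's theorem (as noted in the remark following Proposition \ref{prop:continuity}) $J$ is weakly lower semicontinuous; while $\cF_\Psi$ is weakly lower semicontinuous by Proposition \ref{prop:semicontF} (applicable since $\Psi\in\Delta_2$). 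Therefore
\begin{equation*}
	m_\delta \;\le\; J_{\delta,\Psi}[\bar{u}_{\delta,\Psi}] \;\le\; \liminf_{n\to\infty} J_{\delta,\Psi}[u_n] \;=\; m_\delta,
\end{equation*}
so that $\bar{u}_{\delta,\Psi}$ is a minimizer.

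For uniqueness, under (H5+) Proposition \ref{prop:convex} gives strict convexity of $J$, so $J_{\delta,\Psi}$ is strictly convex and admits at most one minimizer. If instead $\Psi$ is strictly convex, I would verify that $\cF_\Psi$ is strictly convex on $L^1(0,T)$: for $u_1\neq u_2$ in $L^1(0,T)$ and $\theta\in(0,1)$, on the positive-measure set $\{u_1\neq u_2\}$ one has $|\theta u_1+(1-\theta)u_2|\le \theta|u_1|+(1-\theta)|u_2|$ with strict inequality when $u_1,u_2$ have opposite signs, and equality otherwise combined with $|u_1|\neq|u_2|$ on a further positive-measure subset; strict convexity of $\Psi$ then forces $\cF_\Psi[\theta u_1+(1-\theta)u_2]<\theta\cF_\Psi[u_1]+(1-\theta)\cF_\Psi[u_2]$, whence $J_{\delta,\Psi}$ is strictly convex and uniqueness follows. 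The main obstacle is Step 1--2: the coercivity of the penalty must be strong enough to produce weak compactness in $L^1$, which is exactly what the Young-function structure (and the de la Vallée-Poussin characterization of equi-integrability) delivers; the rest reduces to already-established convexity and semicontinuity properties.
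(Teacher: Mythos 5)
Your proposal is correct and follows essentially the same route as the paper: a minimizing sequence, the uniform bound $\cF_\Psi[u_n]\le J_{\delta,\Psi}[u_n]/\delta$, de la Vall\'ee-Poussin plus Dunford--Pettis for weak $L^1$ compactness, weak lower semicontinuity of $J$ (via convexity, continuity and Mazur) and of $\cF_\Psi$ (Proposition \ref{prop:semicontF}), and strict convexity for uniqueness. The only cosmetic differences are that the paper tests finiteness of the infimum against an arbitrary $f\in L^\Psi(0,T)$ rather than $u=0$, and argues weak lower semicontinuity of the sum $J_{\delta,\Psi}$ at once rather than termwise; your explicit verification that $\cF_\Psi$ is strictly convex when $\Psi$ is fills in a detail the paper leaves implicit.
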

\begin{proof}
	Let us first observe that $J_{\delta,\Psi}$ is weakly lower semicontinuous. To do this, observe that $J_{\delta,\Psi}$ is the sum of two convex functionals $J$ and $\delta \cF_\Psi$, thus it is convex. Moreover, $J$ is continuous by Proposition \ref{prop:continuity} and then, in particular, lower semicontinuous, while $\delta \cF_\Psi$ is lower semicontinuous by Proposition \ref{prop:semicontF}. Thus $J_{\delta,\Psi}$ is convex and lower semicontinuous and then weakly lower semicontinuous by a direct application of Mazur's theorem \cite[Theorem $3.9$]{dacorogna2007direct}.\\
	Now let us consider a minimizing sequence $\{u_n\}_{n \in \N}\subset L^1(0,T)$ of $J_{\delta,\Psi}$, i.e. $\{u_n\}_{n \in \N}$ is  such that \linebreak $J_{\delta,\Psi}[u_n]\downarrow \inf_{u \in L^1(0,T)}J_{\delta,\Psi}[u]$. Let us consider any function $f \in L^\Psi(0,T)$. Then $J_{\delta,\Psi}[f]<+\infty$ by definition of $L^\Psi$. In particular this implies that $\inf_{u \in L^1(0,T)}J_{\delta,\Psi}[u]<+\infty$ and we can suppose $J_{\delta,\Psi}[u_1]<+\infty$. Let us observe that
	\begin{equation*}
		\delta \cF_\Psi[u_n]\le J_{\delta,\Psi}[u_n]\le J_{\delta,\Psi}[u_1], \ \forall n \in \N
	\end{equation*} 
and then there exists a constant $C(\delta)=\frac{J_{\delta,\Psi}[u_1]}{\delta}$ such that
\begin{equation*}
	\cF_\Psi[u_n]\le C(\delta), \ \forall n \in \N.
\end{equation*}
By the de la Vall\'ee-Poussin theorem (see \cite[Theorem T22]{meyer1966probability}), we know that the sequence $u_n$ is uniformly integrable. By the Dunford-Pettis theorem (see \cite[Theorem $4.30$]{brezis2010functional}) we have that the sequence $\{u_n\}$ is weakly relatively compact in $L^1(0,T)$ and then there exists $\overline u_{\delta,\Psi} \in L^1(0,T)$ such that $u_n \rightharpoonup \overline{u}_{\delta, \Psi}$. By weak semicontinuity of $J_{\delta,\Psi}$ we have
\begin{equation*}
	\inf_{u \in L^1(0,T)}J_{\delta,\Psi}[u]=\lim_{n \to +\infty}J_{\delta,\Psi}[u_n]\ge J_{\delta,\Psi}[\overline{u}_{\delta,\Psi}]\ge \inf_{u \in L^1(0,T)}J_{\delta,\Psi}[u],
\end{equation*}
concluding the proof of the first statement.
Concerning the second statement, it follows from the fact that if hypothesis (H5+) holds or if $\Psi$ is strictly convex, then $J_{\delta,\Psi}$ is a strictly convex functional and then the minimum is unique.
\end{proof}
\begin{rmk}
Let us first observe that for any $\delta>0$ it holds $\overline{u}_{\delta,\Psi} \in L^\Psi(0,T)$.\\
Moreover, we can prove that the functional $J_{\delta,\Psi}$ is coercive with respect to the weak topology in $L^1(0,T)$, i.e. for any $M>0$ there exists a weakly compact set $K_M \subset L^1(0,T)$ such that $u \in L^1(0,T)\setminus K_M$ implies $J_{\delta,\Psi}[u]>M$. Precisely, if $J_{\delta,\Psi}[u]\le M$, we have
\begin{equation*}
	\delta \cF_\Psi[u]\le J_{\delta,\Psi}[u]\le M
\end{equation*}
and then $\cF_\Psi[u]\le M/\delta$. The set $U_M=\{u \in L^1(0,T): \ \cF_\Psi[u]\le M/\delta\}$ is uniformly integrable by the de la Vall\'ee-Poussin theorem and then it is weakly relatively compact by the Dunford-Pettis theorem. Let $K_M=\overline{U_M}$, where the closure is taken in the weak topology of $L^1(0,T)$, so that $K_M$ is weakly compact. Then $J_{\delta,\Psi}[u]\le M$ implies $u \in K_M$ and, by contrapositive, we have that $u \in L^1(0,T)\setminus K_M$ implies $J_{\delta,\Psi}[u]>M$.
\end{rmk}
Now we want to show that the penalization procedure (i.e. defining the functional $J_{\delta,\Psi}$ as $J$ plus a penalization term $\delta \cF_\Psi$) generates a minimizing family for $J$.
\begin{thm}\label{thm:minseq}
	Let hypotheses (H1) to (H5) hold and consider a Young function $\Psi \in \Delta_2$. The family of functions $\{\overline{u}_{\delta,\Psi}\}_{\delta>0}$ defined in Theorem \ref{thm:minim} constitute a minimizing family for $J$.
\end{thm}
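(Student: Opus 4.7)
The plan is to compare $J[\overline{u}_{\delta,\Psi}]$ with the values of $J$ on a carefully chosen test function that has finite $\cF_\Psi$, and exploit the fact that the minimizing property of $\overline{u}_{\delta,\Psi}$ for $J_{\delta,\Psi}$ gives us a one-sided control once the penalty constant $\delta$ is small.

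Fix $\varepsilon > 0$ and let $m = \inf_{u \in L^1(0,T)} J[u]$. By Corollary \ref{cor:simple} we have $m = \inf_{u \in \fS} J[u]$, so there exists a simple function $u^\star \in \fS$ with $J[u^\star] < m + \varepsilon/2$. Since a simple function is bounded, say $|u^\star(t)| \le M^\star$ almost everywhere, and since any Young function $\Psi$ is continuous and non-decreasing on $[0,+\infty)$, it holds $\cF_\Psi[u^\star] = \int_0^T \Psi(|u^\star(t)|)\,dt \le T\,\Psi(M^\star) < +\infty$, so in particular $u^\star \in \cL^\Psi(0,T)$ and $J_{\delta,\Psi}[u^\star] < +\infty$.

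Now I use that $\overline{u}_{\delta,\Psi}$ realizes the minimum of $J_{\delta,\Psi}$ proved in Theorem \ref{thm:minim}. Since $\delta\,\cF_\Psi[\overline{u}_{\delta,\Psi}] \ge 0$, we get
\begin{equation*}
J[\overline{u}_{\delta,\Psi}] \;\le\; J[\overline{u}_{\delta,\Psi}] + \delta\,\cF_\Psi[\overline{u}_{\delta,\Psi}] \;=\; J_{\delta,\Psi}[\overline{u}_{\delta,\Psi}] \;\le\; J_{\delta,\Psi}[u^\star] \;=\; J[u^\star] + \delta\,\cF_\Psi[u^\star].
\end{equation*}
On the other hand, trivially $J[\overline{u}_{\delta,\Psi}] \ge m$. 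Combining the two bounds,
\begin{equation*}
m \;\le\; J[\overline{u}_{\delta,\Psi}] \;\le\; J[u^\star] + \delta\,\cF_\Psi[u^\star] \;<\; m + \tfrac{\varepsilon}{2} + \delta\,\cF_\Psi[u^\star].
\end{equation*}
Setting $\delta_0 = \varepsilon / (2\,(1+\cF_\Psi[u^\star]))$, for every $\delta \in (0,\delta_0)$ the last term is strictly less than $\varepsilon/2$, so $m \le J[\overline{u}_{\delta,\Psi}] \le m + \varepsilon$, which is exactly the minimizing-family property.

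There is no real obstacle here: the whole argument rests on having a good test function with finite penalty, and simple functions (whose density in $L^1(0,T)$ was already used in Corollary \ref{cor:simple}) supply this automatically because they are bounded. The only subtle point is that one cannot take the test function directly in $L^1(0,T)$, since generic $L^1$ functions need not lie in $\cL^\Psi(0,T)$; restricting to $\fS$ is what makes the comparison $J_{\delta,\Psi}[\overline{u}_{\delta,\Psi}] \le J_{\delta,\Psi}[u^\star]$ meaningful.
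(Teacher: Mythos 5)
Your proof is correct and follows essentially the same route as the paper: both rest on the observation that simple functions are bounded and hence lie in $L^\Psi(0,T)$, the comparison chain $m \le J[\overline{u}_{\delta,\Psi}] \le J_{\delta,\Psi}[\overline{u}_{\delta,\Psi}] \le J[u^\star]+\delta\,\cF_\Psi[u^\star]$, and Corollary \ref{cor:simple}. The only difference is cosmetic: you phrase the conclusion with an explicit $\varepsilon$--$\delta_0$ choice, while the paper takes $\liminf$/$\limsup$ as $\delta\to 0$ and then the infimum over $\fS$.
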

\begin{proof}
	Set $m=\inf_{u \in L^1(0,T)}J[u]$. Let us first observe that $\fS \subset L^\Psi(0,T)$. Indeed, if $u \in \fS$ then there exist $N \in \N$, $\{B_i\}_{i \le N}\subset \cB([0,T])$ and $\{b_i\}_{i \le N}\subset \R$ such that $u=\sum_{i=1}^{N}b_i\mathbf{1}_{B_i}$. We can suppose, without loss of generality, that $\bigcup_{i \le N}B_i=[0,T]$ and that $B_i \cap B_j= \emptyset$ for any $i \not = j$. Hence we have
	\begin{equation*}
		\int_0^T \Psi(|u(t)|)dt=\sum_{i=1}^{N}\int_{B_i}\Psi(|b_i|)dt= \sum_{i=1}^{N}\Psi(|b_i|)|B_i|\le T\max_{i \le N}\Psi(|b_i|)<+\infty.
	\end{equation*}
	Now let us consider any $u \in \fS$ and observe that $J_{\delta,\Psi}[u]<+\infty$. Moreover, we have 
	\begin{equation*}
		m \le J[\overline{u}_{\delta,\Psi}]\le J_{\delta,\Psi}[\overline{u}_{\delta,\Psi}]\le J_{\delta,\Psi}[u]=J[u]+\delta \cF_\Psi[u].
	\end{equation*}
	Taking the limit superior and inferior as $\delta \to 0$ we get
	\begin{equation*}
		m \le \liminf_{\delta \to 0} J[\overline{u}_{\delta,\Psi}]\le \limsup_{\delta \to 0} J[\overline{u}_{\delta,\Psi}]\le J[u].
	\end{equation*}
	 Being $u \in \fS$ arbitrary, we can take the infimum on $\fS$ and use Corollary \ref{cor:simple} to achieve
	\begin{equation*}
		m \le \liminf_{\delta \to 0} J[\overline{u}_{\delta,\Psi}]\le \limsup_{\delta \to 0} J[\overline{u}_{\delta,\Psi}]\le m,
	\end{equation*}
	obtaining $\lim_{\delta \to 0}J[\overline{u}_{\delta,\Psi}]=m$ and concluding the proof.
\end{proof}
Last theorem provides a theoretical way to construct a minimizing family for the functional $J$. Let us first stress out that, under an additional regularity assumption, the previous approach actually proves the existence of a minimizer for $J$.
\begin{thm}\label{thm:converg}
	Let hypotheses (H1) to (H5) hold and suppose there exist two Young functions $\Psi_i \in \Delta_2$, $i=1,2$, and two constants $C,\delta_0>0$ such that
	\begin{equation}\label{eq:FPsi}
		\cF_{\Psi_2}[\overline{u}_{\delta,\Psi_1}]\le C
	\end{equation}
for any $\delta\in (0,\delta_0)$, where the functions $\overline{u}_{\delta,\Psi_1}$ are defined in Theorem \ref{thm:minim}. Then there exists $\overline{u}\in L^1(0,T)$ such that
\begin{equation*}
	\inf_{u \in L^1(0,T)}J[u]=J[\overline{u}]
\end{equation*}
and $\overline{u}_{\delta,\Psi_1}\rightharpoonup \overline{u}$ as $\delta \to 0$.\\
Moreover, if hypothesis (H5+) holds, then $\overline{u}$ is unique.
\end{thm}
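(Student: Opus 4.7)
The plan is to combine the uniform bound (\ref{eq:FPsi}) with standard weak compactness in $L^1$ and then exploit the weak lower semicontinuity of $J$ established earlier in the paper.

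First I would use the hypothesis $\cF_{\Psi_2}[\overline{u}_{\delta,\Psi_1}]\le C$ for all $\delta\in(0,\delta_0)$. Since $\Psi_2$ is a Young function, it is non-negative, non-decreasing and satisfies $\lim_{t\to+\infty}\Psi_2(t)/t=+\infty$, so the de la Vall\'ee-Poussin theorem (as applied in the proof of Theorem \ref{thm:minim}) gives uniform integrability of the family $\{\overline{u}_{\delta,\Psi_1}\}_{\delta\in(0,\delta_0)}$ in $L^1(0,T)$. The Dunford-Pettis theorem then ensures that this family is weakly relatively compact in $L^1(0,T)$. Hence for any sequence $\delta_n \to 0^+$ one can extract a subsequence, still denoted $\delta_n$, along which $\overline{u}_{\delta_n,\Psi_1}\rightharpoonup \overline{u}$ in $L^1(0,T)$ for some $\overline{u}\in L^1(0,T)$.

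Next I would identify $\overline{u}$ as a minimizer of $J$. By Proposition \ref{prop:convex}, $J$ is convex, and by Proposition \ref{prop:continuity} it is continuous on $L^1(0,T)$; hence, via Mazur's theorem (as remarked after Proposition \ref{prop:continuity}), $J$ is weakly lower semicontinuous on $L^1(0,T)$. On the other hand, Theorem \ref{thm:minseq} gives $\lim_{\delta\to 0}J[\overline{u}_{\delta,\Psi_1}]=m$, where $m=\inf_{u\in L^1(0,T)}J[u]$. Combining these,
\begin{equation*}
m\le J[\overline{u}]\le \liminf_{n\to+\infty}J[\overline{u}_{\delta_n,\Psi_1}]=m,
\end{equation*}
so $J[\overline{u}]=m$ and $\overline{u}$ is a minimizer of $J$ over $L^1(0,T)$.

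Finally, for the uniqueness statement assume (H5+). Then $J$ is strictly convex by Proposition \ref{prop:convex}, so the minimizer $\overline{u}$ is unique. This uniqueness upgrades the subsequential weak convergence to convergence of the whole family: the family $\{\overline{u}_{\delta,\Psi_1}\}_{\delta\in(0,\delta_0)}$ is weakly relatively compact in $L^1(0,T)$, and by the argument of the previous paragraph every weakly convergent subsequence must have limit equal to the unique minimizer $\overline{u}$; a standard subsequence-of-subsequence argument then yields $\overline{u}_{\delta,\Psi_1}\rightharpoonup \overline{u}$ as $\delta\to 0^+$. The main subtlety I anticipate is precisely this last point: without (H5+) one only gets existence of a subsequential weak limit that is a minimizer, and the statement of the theorem should be read accordingly; the bound (\ref{eq:FPsi}) does essentially all the compactness work, and the remaining ingredients are immediate from the results of Sections 4 and 5.
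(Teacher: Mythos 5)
Your proposal is correct and follows essentially the same route as the paper: de la Vall\'ee-Poussin plus Dunford--Pettis for weak relative compactness, then Theorem \ref{thm:minseq} combined with the weak lower semicontinuity of $J$ (convexity plus continuity via Mazur) to identify the weak limit as a minimizer, and strict convexity under (\emph{H5+}) for uniqueness. Your extra remark that without (\emph{H5+}) one only obtains subsequential weak convergence (and that the full-family convergence requires the subsequence-of-subsequence argument through the unique minimizer) is in fact slightly more careful than the paper's own proof, which passes to a subsequence without comment.
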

\begin{proof}
	Let us consider any sequence $\delta_n \downarrow 0$ with $\delta_1<\delta_0$. Equation \eqref{eq:FPsi} implies, via the de la Vall\'ee-Poussin theorem, that the sequence $\{\overline{u}_{\delta_n,\Psi_1}\}_{n \in \N}$ is uniformly integrable and then the Dunford-Pettis theorem ensures that it is weakly relatively compact. Hence, there exists $\overline{u}$ such that $\overline{u}_{\delta_n,\Psi_1}\rightharpoonup \overline{u}$. \\
	By Theorem \ref{thm:minseq} we know that $\lim_{n}J[\overline{u}_{\delta_n,\Psi_1}]=\inf_{u \in L^1(0,T)}J[u]=:m$. On the other hand, being $J$ convex and continuous in $L^1(0,T)$, we know that it is weakly lower semicontinuous and then
	\begin{equation*}
		m=\lim_{n}J[\overline{u}_{\delta_n,\Psi_1}]\ge J[\overline{u}]\ge m
	\end{equation*}
thus $J[\overline{u}]=m$, concluding the first part of the proof. The second statement follows from the strict convexity of $J$.
\end{proof}
\begin{rmk}
	The previous theorem can be proved directly by using the fact that $J_{\delta_n, \Psi}$ is a monotone sequence of operators pointwise converging to $J$, that is convex and lower semicontinuous, thus it also $\Gamma$-converges towards $J$ in the weak topology of $L^1(0,T)$ (see \cite[Remark $1.40$]{braides2002gamma}). Let us also recall that, being $L^\infty(0,T)$ not separable, the weak topology of $L^1(0,T)$ is not metrizable on closed balls, hence the more general definition of $\Gamma$-convergence on topological spaces has to be considered.
\end{rmk}
Let us stress out that if a minimizer of $J$ exists \textit{in the right space}, then we are under the hypotheses of the previous theorem.
\begin{cor}\label{cor:weakconv}
	Let hypotheses \textit{(H1)} to \textit{(H5)} hold and consider a Young function $\Psi \in \Delta_2$. Suppose there exists $\overline{u} \in L^\Psi(0,T)$ such that
	\begin{equation*}
	J[\overline{u}]=\min_{u \in L^1(0,T)}J[u].
	\end{equation*}
	Then, the family $\{\overline{u}_{\delta,\Psi}\}_{\delta>0}$ defined in Theorem \ref{thm:minim} satisfies the hypotheses of Theorem \ref{thm:converg} with $\Psi_1=\Psi_2=\Psi$.
	Moreover, if hypothesis \textit{(H5+)} holds, then $\overline{u}_{\delta_n,\Psi}\rightharpoonup \overline{u}$ for some sequence $\delta_n \to 0$.
\end{cor}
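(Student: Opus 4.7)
The plan is to verify the boundedness condition \eqref{eq:FPsi} with $\Psi_1=\Psi_2=\Psi$ by exploiting the fact that $\overline{u}_{\delta,\Psi}$ is defined as the minimizer of $J_{\delta,\Psi}=J+\delta\cF_\Psi$, and that $\overline{u}$ itself lies in $L^\Psi(0,T)$ so it is an admissible test function with finite penalization cost. Once this is done, the statement will follow directly by invoking Theorem \ref{thm:converg} and, for uniqueness, by combining it with the strict convexity supplied by Proposition \ref{prop:convex}.

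Concretely, I would first test the minimality of $\overline{u}_{\delta,\Psi}$ against $\overline{u}$. Since $\overline{u}\in L^\Psi(0,T)\subset L^1(0,T)$ we have
\begin{equation*}
J[\overline{u}_{\delta,\Psi}]+\delta\cF_\Psi[\overline{u}_{\delta,\Psi}]=J_{\delta,\Psi}[\overline{u}_{\delta,\Psi}]\le J_{\delta,\Psi}[\overline{u}]=J[\overline{u}]+\delta\cF_\Psi[\overline{u}].
\end{equation*}
Because $\overline{u}$ is a global minimizer of $J$ over $L^1(0,T)$, we have $J[\overline{u}]\le J[\overline{u}_{\delta,\Psi}]$, so rearranging and dividing by $\delta>0$ gives
\begin{equation*}
\cF_\Psi[\overline{u}_{\delta,\Psi}]\le \cF_\Psi[\overline{u}]
\end{equation*}
for every $\delta>0$. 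The right-hand side is finite since $\overline{u}\in L^\Psi(0,T)=\cL^\Psi(0,T)$ (recall $\Psi\in\Delta_2$). Thus \eqref{eq:FPsi} holds with $\Psi_1=\Psi_2=\Psi$, $C=\cF_\Psi[\overline{u}]$, and any $\delta_0>0$.

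With the hypotheses of Theorem \ref{thm:converg} verified, that theorem produces some $\widetilde{u}\in L^1(0,T)$ with $J[\widetilde{u}]=\min_{u\in L^1(0,T)}J[u]$ and a sequence $\delta_n\downarrow 0$ such that $\overline{u}_{\delta_n,\Psi}\rightharpoonup \widetilde{u}$ in $L^1(0,T)$. Under the additional hypothesis \emph{(H5+)}, Proposition \ref{prop:convex} ensures that $J$ is strictly convex, so its minimizer on $L^1(0,T)$ is unique; hence necessarily $\widetilde{u}=\overline{u}$, and we obtain $\overline{u}_{\delta_n,\Psi}\rightharpoonup \overline{u}$.

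There is essentially no analytic obstacle here: the proof is a one-line variational comparison using the definition of $\overline{u}_{\delta,\Psi}$ as a minimizer of $J_{\delta,\Psi}$, combined with the assumption that an actual minimizer of $J$ already exists in $L^\Psi(0,T)$. The only point to be careful about is keeping the roles of the two convexity hypotheses separate, namely invoking \emph{(H5)} to apply Theorem \ref{thm:converg} and \emph{(H5+)} only for the final uniqueness/identification step.
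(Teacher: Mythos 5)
Your proposal is correct and follows essentially the same argument as the paper: the variational comparison $J_{\delta,\Psi}[\overline{u}_{\delta,\Psi}]\le J_{\delta,\Psi}[\overline{u}]$ combined with the minimality of $\overline{u}$ for $J$ yields $\cF_\Psi[\overline{u}_{\delta,\Psi}]\le\cF_\Psi[\overline{u}]<+\infty$, verifying \eqref{eq:FPsi} with $C=\cF_\Psi[\overline{u}]$. Your additional identification of the weak limit via strict convexity under \emph{(H5+)} is also the intended reading of the second part of the statement.
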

\begin{proof}
	We have to show that there exists a constant $C>0$ such that $\cF_\Psi[\overline{u}_{\delta,\Psi}]\le C$ for any $\delta>0$. To do this, let us recall that $\overline{u}_{\delta,\Psi}$ is a global minimizer of $J_{\delta,\Psi}$, thus we have
	\begin{equation*}
	J[\overline{u}_{\delta,\Psi}]+\delta \cF_\Psi[\overline{u}_{\delta,\Psi}]=J_{\delta,\Psi}[\overline{u}_{\delta,\Psi}]\le J_{\delta,\Psi}[\overline{u}]=J[\overline{u}]+\delta \cF_\Psi[\overline{u}].
	\end{equation*}
	On the other hand, being $\overline{u}$ the global minimizer of $J$, we have
	\begin{equation*}
	J[\overline{u}_{\delta,\Psi}]+\delta \cF_\Psi[\overline{u}_{\delta,\Psi}]\le J[\overline{u}]+\delta \cF_\Psi[\overline{u}]\le J[\overline{u}_{\delta,\Psi}]+\delta \cF_\Psi[\overline{u}],
	\end{equation*}
	that is to say
	\begin{equation*}
	\cF_\Psi[\overline{u}_{\delta,\Psi}]\le  \cF_\Psi[\overline{u}].
	\end{equation*}
	Setting $C=\cF_\Psi[\overline{u}]$, since $C<+\infty$ by hypothesis, we conclude the proof.
\end{proof}
\begin{rmk}
	Actually, de la Vall\'ee-Poussin theorem tells us that if we have a global minimizer $\overline{u} \in L^1(0,T)$ for $J$, then there exists a Young function $\Psi$ such that $\overline{u} \in \cL^\Psi(0,T)$. In such case, the previous theorem holds by choosing $\Psi$ as Young function even if $\Psi \not \in \Delta_2$. In conclusion, if $J$ admits a global minimizer $\overline{u} \in L^1(0,T)$, then there exists a Young function $\Psi$ such that $\overline{u}_{\delta,\Psi}$ weakly converge towards a (possibly different) minimizer of $J$ and, if $\overline{u}$ is unique, then $\overline{u}_{\delta,\Psi} \rightharpoonup \overline{u}$.
\end{rmk}
As a consequence of the weak convergence of the minimizers we obtain a form of weak convergence of the approximating processes.
\begin{thm}\label{thm:strongconv}
	Let hypotheses \textit{(H1)} to \textit{(H5+)} hold and suppose $J$ admits a global minimizer $\overline{u} \in L^{\widetilde{p}}(0,T)$ for some $\widetilde{p}>2\sqrt{2}$ and consider $\Psi(t)=t^{\widetilde{p}}$. Let $\{\overline{u}_{\delta,\Psi}\}_{\delta>0}$ be the family defined in Theorem \ref{thm:minim}. Let $X_0 \in L^2(0,T)$. Then there exists a sequence $\delta_n \to 0$ such that $\cS_{X_0}\overline{u}_{\delta_n,\Psi}\Rightarrow \cS_{X_0}\overline{u}$ in $C([0,T])$ in distribution.
\end{thm}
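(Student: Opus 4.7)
My plan is to upgrade the weak convergence coming from Corollary~\ref{cor:weakconv} to pathwise uniform convergence of the associated processes, from which weak convergence in $C([0,T])$ is immediate.

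I would start by applying Corollary~\ref{cor:weakconv} with $\Psi(t)=t^{\widetilde{p}}$: since $\widetilde{p}>1$, $\Psi$ is a strictly convex Young function in $\Delta_2$, and $L^\Psi(0,T)=L^{\widetilde{p}}(0,T)$. Under (H5+), the corollary produces a sequence $\delta_n \downarrow 0$ such that $\overline{u}_{\delta_n,\Psi} \rightharpoonup \overline{u}$ weakly in $L^1(0,T)$, and its proof gives the uniform bound $\cF_\Psi[\overline{u}_{\delta_n,\Psi}]\le \cF_\Psi[\overline{u}]$, i.e.\ $\{\overline{u}_{\delta_n,\Psi}\}$ is bounded in $L^{\widetilde{p}}(0,T)$. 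Since this space is reflexive and the $L^1$-weak limit is unique, the convergence lifts to weak convergence in $L^{\widetilde{p}}(0,T)$.

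Then, by affinity of $\cS$ (Corollary~\ref{linear}),
\begin{equation*}
\cS_{X_0}\overline{u}_{\delta_n,\Psi}(t) - \cS_{X_0}\overline{u}(t) = G(t)e^{A(t)} \int_0^t \frac{e^{-A(s)}}{G(s)} \left(\overline{u}_{\delta_n,\Psi}(s) - \overline{u}(s)\right) ds.
\end{equation*}
Since $\sup_{t \in [0,T]} G(t)e^{A(t)}$ is a.s.\ finite (by Lemma~\ref{lem:supest} or simply a.s.\ continuity of $G$), it suffices to show that the inner integral vanishes uniformly in $t \in [0,T]$ along a full-measure event. I would fix $\omega$ in the event on which $G(\cdot,\omega)$ is continuous and strictly positive, so that $f_\omega(s):=e^{-A(s)}/G(s,\omega)$ is continuous on $[0,T]$ and in particular lies in $L^q(0,T)$ for the conjugate exponent $q=\widetilde{p}/(\widetilde{p}-1)$.

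The main step is then to argue that the linear operator
\begin{equation*}
K_\omega:L^{\widetilde{p}}(0,T)\longrightarrow C([0,T]),\qquad (K_\omega u)(t)=\int_0^t f_\omega(s)u(s)\,ds,
\end{equation*}
is compact. Hölder's inequality applied to $f_\omega \in L^q(0,T)$ shows that $K_\omega$ maps bounded sets of $L^{\widetilde{p}}$ to a uniformly bounded family in $C([0,T])$, while absolute continuity of the Lebesgue integral applied to $|f_\omega|^q$ gives uniform equicontinuity; by Arzelà--Ascoli the image is relatively compact. Since compact linear operators map weakly convergent sequences to strongly convergent ones, $K_\omega(\overline{u}_{\delta_n,\Psi}-\overline{u}) \to 0$ uniformly on $[0,T]$ for a.e.\ $\omega$, which combined with the a.s.\ bound on $\sup_t G(t)e^{A(t)}$ gives a.s.\ uniform convergence of the processes, and in particular convergence in distribution in $C([0,T])$.

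The part of the statement that looks least transparent from this vantage is the quantitative requirement $\widetilde{p}>2\sqrt{2}$: the sketch above only uses $\widetilde{p}>1$ (needed to place $\Psi$ in $\Delta_2$ and to have $L^{\widetilde{p}}$ reflexive). I suspect the authors instead pursue a more quantitative route that proves tightness in $C([0,T])$ directly via a Kolmogorov-type criterion applied to the moment bounds of Lemmas~\ref{lem:momest2} and~\ref{lem:supest}, coupled with finite-dimensional convergence; the exponent $2\sqrt{2}$ would plausibly appear from iterating Hölder's inequality against the moments of the geometric Brownian motions $G$ and $G'$. If the compact-operator argument above is correct, the $2\sqrt{2}$ threshold is an artifact of the chosen technique rather than an intrinsic feature of the result.
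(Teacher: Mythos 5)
Your proof is correct, but it follows a genuinely different route from the paper's. The paper first obtains convergence of the finite-dimensional distributions (using, as you do, that $\mathbf{1}_{[0,t]}(\cdot)e^{-A(\cdot)}/G(\cdot,\omega)$ is almost surely a bounded test function for the weak $L^1$ convergence supplied by Corollary \ref{cor:weakconv}), and then proves tightness of $\{\cS_{X_0}\overline{u}_{\delta,\Psi}\}_{\delta>0}$ in $C([0,T])$ via a Kolmogorov-type increment estimate $\E[|X_\delta(t_2)-X_\delta(t_1)|^{\overline{p}}]\le C(t_2-t_1)^{\gamma}$ with $\gamma>1$, built from Lemma \ref{lem:momest}, Doob's maximal inequality and repeated H\"older interpolation against the moments of $G$ and $G'$; the hypothesis $\widetilde{p}>2\sqrt{2}$ is exactly what allows one to fit exponents $p_1\in\left(\frac{4}{\widetilde{p}},\frac{\widetilde{p}}{2}\right)$ and $\overline{p}\in\left(2,\min\left\{\frac{\widetilde{p}}{p_1},4\right\}\right)$ so that both $\overline{p}-\frac{1}{p_1}$ and $\frac{\overline{p}}{2}$ exceed $1$, confirming your guess about where the threshold comes from. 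Your alternative is sound at every step: the bound $\cF_\Psi[\overline{u}_{\delta,\Psi}]\le\cF_\Psi[\overline{u}]$ from the proof of Corollary \ref{cor:weakconv}, reflexivity of $L^{\widetilde{p}}(0,T)$ and uniqueness of the weak $L^1$ limit upgrade the convergence to weak convergence in $L^{\widetilde{p}}(0,T)$; for almost every fixed $\omega$ the Volterra operator $u\mapsto\int_0^{\cdot}\frac{e^{-A(s)}}{G(s,\omega)}u(s)\,ds$, having a continuous (hence bounded) kernel, is compact from $L^{\widetilde{p}}(0,T)$ into $C([0,T])$ (H\"older with $q=\widetilde{p}/(\widetilde{p}-1)$ for uniform boundedness, the estimate $\int_{t_1}^{t_2}|f_\omega|^{q}\le \Norm{f_\omega}{\infty}^{q}(t_2-t_1)$ for equicontinuity, then Arzel\`a--Ascoli), hence it carries the weakly convergent sequence to a uniformly convergent one; multiplying by the almost surely finite $\sup_{t\in[0,T]}G(t)e^{A(t)}$ gives almost sure convergence in $C([0,T])$, which implies (and is strictly stronger than) the asserted convergence in distribution. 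Your approach thus needs only $\widetilde{p}>1$ and shows the constant $2\sqrt{2}$ to be an artifact of the paper's technique; what the paper's quantitative route buys instead is the standalone tightness criterion recorded in the remark following the theorem (tightness of $\{X_\delta\}_{\delta>0}$ from a uniform $L^{\widetilde{p}}$ bound alone, without presupposing a limit $\overline{u}$, which is then used to infer existence of a minimizer), although your compactness argument could be adapted to recover that as well by first extracting an $L^{\widetilde{p}}$-weakly convergent subsequence.
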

\begin{proof}
	Let $\mathcal{A}=\{\omega \in \Omega: \ G(\cdot,\omega)\in C([0,T])\}$ and recall that $\bP(\mathcal A)=1$. Fix $t>0$, $\omega \in \mathcal{A}$ and observe that, by Corollary \ref{linear} and Equation \eqref{eq:solutionmap},
\begin{align*}
	|\cS_{X_0}\overline{u}_{\delta,\Psi}(t,\omega)-\cS_{X_0}\overline{u}(t,\omega)|=|\cS_0(\overline{u}_{\delta,\Psi}-\overline{u})(t,\omega)|=G(t,\omega)e^{A(t)}\left|\int_0^t\frac{e^{-A(s)}}{G(s,\omega)}\overline{u}_{\delta,\Psi}(s)ds-\int_0^t\frac{e^{-A(s)}}{G(s,\omega)}\overline{u}(s)ds\right|.
\end{align*}
	Consider $\delta_n \to 0$ as in Corollary \ref{cor:weakconv}, so that $\overline{u}_{\delta_n,\Psi}\rightharpoonup \overline{u}$. Since $\frac{e^{-A(\cdot)}}{G(\cdot,\omega)}$ is a continuous function, we have that
	\begin{equation*}
		\int_0^t\frac{e^{-A(s)}}{G(s,\omega)}\overline{u}_{\delta_n,\Psi}(s)ds \to \int_0^t\frac{e^{-A(s)}}{G(s,\omega)}\overline{u}(s)ds
	\end{equation*}
	and then
	\begin{equation}\label{eq:limit}
		\lim_{n \to +\infty}|\cS_{X_0}\overline{u}_{\delta_n,\Psi}(t,\omega)-\cS_{X_0}\overline{u}(t,\omega)|=0, \ \forall \omega \in \mathcal A,
	\end{equation}
	that is to say that, for fixed $t \in [0,T]$, it holds $\cS_{X_0}\overline{u}_{\delta_n,\Psi}(t) \to \cS_{X_0}\overline{u}(t)$ almost surely. Let us observe that this is enough to guarantee the convergence in any finite-dimensional distribution. Indeed, consider $N \in \N$ and $t_1,\dots,t_N \in [0,T]$ and fix $\omega \in \mathcal A$. Since Equation \eqref{eq:limit} holds for any $t_i$, $i=1,\dots,N$, we have that for any fixed $\varepsilon>0$ there exists $\nu_i \in \N$ such that if $n \ge \nu_i$ it holds
	\begin{equation*}
		|\cS_{X_0}\overline{u}_{\delta_n,\Psi}(t_i,\omega)-\cS_{X_0}\overline{u}(t_i,\omega)|<\frac{\varepsilon}{N}.
	\end{equation*}
	Let $\nu=\max\{\nu_1,\dots,\nu_N\}$ and consider $n \ge \nu$. By the triangular inequality we get
	\begin{equation*}
		|\c(S_{X_0}\overline{u}_{\delta_n,\Psi}(t_i,\omega))_{i \le N}-(\cS_{X_0}\overline{u}(t_i,\omega))_{i \le N}| \le \sum_{i=1}^{N}|\cS_{X_0}\overline{u}_{\delta_n,\Psi}(t_i,\omega)-\cS_{X_0}\overline{u}(t_i,\omega))_{i \le n}|<\varepsilon,
	\end{equation*}
so that
\begin{equation*}
	\lim_{n \to +\infty}|(\cS_{X_0}\overline{u}_{\delta_n,\Psi}(t_i,\omega))_{i \le N}-(\cS_{X_0}\overline{u}(t_i,\omega))_{i \le N}|=0, \ \forall \omega \in \mathcal A.
\end{equation*}
This implies that $(\cS_{X_0}\overline{u}_{\delta_n,\Psi}(t_i))_{i \le N} \to (\cS_{X_0}\overline{u}(t_i))_{i \le N}$ almost surely and thus in distribution. To extend the convergence in distribution to the whole paths, we need to show that the sequence $\cS_{X_0}\overline{u}_{\delta,\Psi_n}$ is tight.\\
Let us denote, for simplicity, $X_\delta=\cS_{X_0}\overline{u}_{\delta,\Psi}$. Consider $0 \le t_1 < t_2 \le T$ and observe that, by Equation \eqref{eq:solutionmap}, it holds
	\begin{align*}
	X_\delta(t_2)-X_\delta(t_1)=(G(t_2)e^{A(t_2)}-G(t_1)e^{A(t_1)})\int_0^{t_2} \frac{e^{-A(s)}}{G(s)}\overline{u}_{\delta,\Psi}(s)ds+G(t_2)e^{A(t_1)}\int_{t_1}^{t_2}\frac{e^{-A(s)}}{G(s)}\overline{u}_{\delta,\Psi}(s)ds.
	\end{align*}
	Being $\widetilde{p}>2\sqrt{2}$, we have
	\begin{equation*}
	\frac{4+\widetilde{p}}{3\widetilde{p}-1}<\frac{4+\widetilde{p}}{4}<\frac{4}{\widetilde{p}}<\frac{\widetilde{p}}{2}<\widetilde{p}-1.
\end{equation*}
Hence, we can consider $p_1 \in \left(\frac{4}{\widetilde{p}},\frac{\widetilde{p}}{2}\right)$, so that
\begin{equation*}
	1<\frac{p_1+1}{p_1}<2<\frac{\widetilde{p}}{p_1}<\frac{4\widetilde{p}}{4+\widetilde{p}},
\end{equation*}
and $\overline{p} \in \left(2,\min\left\{\frac{\widetilde{p}}{p_1},4\right\}\right)$. By convexity inequality,
	\begin{align}\label{eq:passdist7}
		\begin{split}
	\E[|X_\delta(t_2)-X_\delta(t_1)|^{\overline{p}}]&\le 2^{\overline{p}-1}\left( \E\left[|G(t_2)e^{A(t_2)}-G(t_1)e^{A(t_1)}|^{\overline{p}}\left|\int_0^{t_2} \frac{e^{-A(s)}}{G(s)}\overline{u}_{\delta,\Psi}(s)ds\right|^{\overline{p}}\right]\right.\\&\qquad \left.+\E\left[G^{\overline{p}}(t_1)e^{\overline{p}A(t_1)}\left|\int_{t_1}^{t_2}\frac{e^{-A(s)}}{G(s)}\overline{u}_{\delta,\Psi}(s)ds\right|^{\overline{p}}\right]\right)\\
	&=2^{\overline{p}-1}(I_1+I_2).
\end{split}	
\end{align}
	Let us first work with $I_2$. Using H\"older's inequality with exponent $p_1$ (and $q_1$ such that $1/p_1+1/q_1=1$) and Jensen's inequality we achieve
	\begin{align}\label{eq:passdist0}
		\begin{split}
		I_2 &\le \E[G^{\overline{p}q_1}(t_1)e^{\overline{p}q_1A(t_1)}]^{\frac{1}{q_1}}(t_2-t_1)^{\overline{p}}\E\left[\left|\frac{1}{(t_2-t_1)}\int_{t_1}^{t_2}\frac{e^{-A(s)}}{G(s)}\overline{u}_{\delta,\Psi}(s)ds\right|^{\overline{p}p_1}\right]^{\frac{1}{p_1}}\\
		&\le \E[G^{\overline{p}q_1}(t_1)e^{\overline{p}q_1A(t_1)}]^{\frac{1}{q_1}}(t_2-t_1)^{\overline{p}-\frac{1}{p_1}}\E\left[\int_{t_1}^{t_2}\frac{e^{-\overline{p}p_1A(s)}}{G^{\overline{p}p_1}(s)}|\overline{u}_{\delta,\Psi}(s)|^{\overline{p}p_1}ds\right]^{\frac{1}{p_1}}.
		\end{split}
	\end{align}
	Set $\gamma_1=\overline{p}-\frac{1}{p_1}$ with $\gamma_1>1$ by the choice of $\overline{p}$ and $p_1$. Now observe that
	\begin{align*}
		\E[G^{\overline{p}q_1}(t_1)e^{\overline{p}q_1A(t_1)}]^{\frac{1}{q_1}}&\le \left(\sup_{t \in [0,T]}e^{\overline{p}A(t)}\right)\E\left[\sup_{t \in [0,T]}|G(t)|^{\overline{p}{q_1}}\right]^{\frac{1}{q_1}}\\
		&\le \left(\sup_{t \in [0,T]}e^{\overline{p}A(t)}\right)\left(\frac{\overline{p}{q_1}}{\overline{p}{q_1}-1}\right)^{\overline{p}}e^{\frac{\overline{p}(\overline{p}{q_1}-1)}{2}T}=:C_1(T,\overline{p},p_1),
	\end{align*}
	where we also used Doob's maximal inequality. On the other hand, we have
	\begin{align}\label{eq:passdist1}
		\begin{split}
		\E\left[\int_{t_1}^{t_2}\frac{e^{-\overline{p}p_1A(s)}}{G^{\overline{p}p_1}(s)}|\overline{u}_{\delta,\Psi}(s)|^{\overline{p}p_1}ds\right]^{\frac{1}{p_1}}&\le \E\left[\int_{0}^{T}e^{\overline{p}p_1(s-A(s))}(G'(s))^{\overline{p}p_1}|\overline{u}_{\delta,\Psi}(s)|^{\overline{p}p_1}ds\right]^{\frac{1}{p_1}}\\
		&\le \left(\sup_{t \in [0,T]}e^{\overline{p}(t-A(t))}\right)\left(\int_0^T|\overline{u}_{\delta,\Psi}|^{\overline{p}p_1}(s)ds\right)^{\frac{1}{p_1}}\E\left[\sup_{t \in [0,T]}|G'(t)|^{\overline{p}p_1}\right]^{\frac{1}{p_1}}\\
		&\le \left(\sup_{t \in [0,T]}e^{\overline{p}(t-A(t))}\right)e^{\frac{\overline{p}(\overline{p}{p_1}-1)}{2}T}\left(\int_0^T|\overline{u}_{\delta,\Psi}|^{\overline{p}p_1}(s)ds\right)^{\frac{1}{p_1}}.
		\end{split}
	\end{align}
	Concerning the last remaining integral in the previous inequality, let us observe that, by definition of $\overline{p}$ and $p_1$, it holds $\frac{\widetilde{p}}{\overline{p}p_1}>1$ hence, we can use it as exponent in H\"older's inequality, obtaining
	\begin{equation*}
		\left(\int_0^T|\overline{u}_{\delta,\Psi}|^{\overline{p}p_1}(s)ds\right)^{\frac{1}{p_1}}\le \left(\int_0^T|\overline{u}_{\delta,\Psi}|^{\widetilde{p}}(s)ds\right)^{\frac{\overline{p}}{\widetilde{p}}}T^\frac{\widetilde{p}-\overline{p}p_1}{\widetilde{p}}.
	\end{equation*}
	Arguing as in Corollary \ref{cor:weakconv}, we know that $\cF_\Psi[\overline{u}_{\delta,\Psi}]\le \cF_\Psi[\overline{u}]$ and then
	\begin{equation*}
		\left(\int_0^T|\overline{u}_{\delta,\Psi}|^{\overline{p}p_1}(s)ds\right)^{\frac{1}{p_1}}\le \left(\int_0^T|\overline{u}|^{\widetilde{p}}(s)ds\right)^{\frac{\overline{p}}{\widetilde{p}}}T^\frac{\widetilde{p}-\overline{p}p_1}{\widetilde{p}}=:C_2(T,\widetilde{p},\overline{p},p_1,\overline{u}).
	\end{equation*}
	Plugging last inequality in Equation \eqref{eq:passdist1} we get
	\begin{align*}
		\begin{split}
			\E\left[\int_{t_1}^{t_2}\frac{e^{-\overline{p}p_1A(s)}}{G^{\overline{p}p_1}(s)}|\overline{u}_{\delta,\Psi}(s)|^{\overline{p}p_1}ds\right]^{\frac{1}{p_1}}&\le \left(\sup_{t \in [0,T]}e^{\overline{p}(t-A(t))}\right)e^{\frac{\overline{p}(\overline{p}{p_1}-1)}{2}T}C_2(T,\widetilde{p},\overline{p},p_1,\overline{u})=:C_3(T,\widetilde{p},\overline{p},p_1,\overline{u}).
		\end{split}
	\end{align*}
	Setting then $C_4(T,\widetilde{p},\overline{p},p_1,\overline{u}):=C_3(T,\widetilde{p},\overline{p},p_1,\overline{u})C_1(T,\overline{p},p_1)$ we obtain, from Equation \eqref{eq:passdist0},
	\begin{equation}\label{eq:passdist6}
		I_2 \le C_4(T,\widetilde{p},\overline{p},p_1,\overline{u})(t_2-t_1)^{\gamma_1}.
	\end{equation}
	Now let us consider $I_1$. By H\"older's inequality with exponent $\frac{4}{\overline{p}}>1$ we get
	\begin{equation}\label{eq:passdist2}
		I_1\le \E\left[|G(t_2)e^{A(t_2)}-G(t_1)e^{A(t_1)}|^4\right]^{\frac{\overline{p}}{4}}\E\left[\left|\int_0^{t_2}\frac{e^{-A(s)}}{G(s)}\overline{u}_{\delta,\Psi}(s)ds\right|^{\frac{4\overline{p}}{4-\overline{p}}}\right]^{\frac{4-\overline{p}}{4}}.
	\end{equation}
	Let us first consider the first factor of $I_1$. Let $Y(t)=G(t)e^{A(t)}$ and observe, by It\^o's formula, that
	\begin{equation*}
		dY(t)=a(t)Y(t)dt+Y(t)dW(t)
	\end{equation*}
	in $[t_1,t_2]$, that is to say 
	\begin{equation*}
	Y(t)=Y(t_1)+\int_{t_1}^{t} a(s)Y(s)ds + \int_{t_1}^t Y(s)dW(s).
	\end{equation*}
	For any $t \in [0,t_2-t_1]$ it holds 
\begin{align*}
Y(t+t_1)&=Y(t_1)+\int_{t_1}^{t+t_1} a(s)Y(s)ds + \int_{t_2}^{t+t_2} Y(s)dW(s) \\
&= Y(t_1)+\int_0^t a(s+t_1)Y(s+t_1)ds +\int_0^t Y(s+t_1)d\widetilde W(s)\\
&= Y(t_1)+\int_0^t a(s+t_1)(Y(s+t_1)-Y(t_1))ds+\int_0^t a(s+t_1)Y(t_1)ds \\
&\qquad +\int_0^t (Y(s+t_1)-Y(t_1))d\widetilde W(s)+ \int_0^t Y(t_1)d\widetilde W(s),
\end{align*}	
where we used the change of variables $s \mapsto s-t_1$ and we set $\widetilde W(t):=W(t+t_1)-W(t)$, that is still a Brownian motion. Hence, the process $\widetilde Y(t)=Y(t+t_1)-Y(t_1)$ solves the SDE 
\begin{equation*}
d\widetilde Y(t)=a(t+t_1)\left(\widetilde Y(t)+Y(t_1) \right)dt+(\widetilde Y(t)+Y(t_1))d\widetilde W(t), \quad \widetilde Y(0)=0
\end{equation*}
with $t\in [0,t_1-t_2]$. We can extend the process by setting $\widetilde Y(t)=Y(t_2)-Y(t_1)$ as $t\in [t_2-t_1,T]$ so that:
\begin{equation*}
d\widetilde Y(t)=\widetilde a(t)\left(\widetilde Y(t)+Y(t_1) \right)dt+(\widetilde Y(t)+Y(t_1))\mathbf{1}_{[0,t_2-t_1]}d\widetilde W(t), \quad \widetilde Y(0)=0,
\end{equation*}
where $\widetilde a(t)=a(t+t_1)$ if $t\in[0,t_2-t_1]$ and $\widetilde a(t)=0$ if $t\in(t_2-t_1,T]$. Now set $M=\Norm{a}{L^\infty(0,T)}\geq \Norm{\widetilde a}{L^\infty(0,T)}$ and observe that by Lemma \ref{lem:momest}, 
	\begin{align*}
		\sup_{t \in [0,t_2-t_1]}\E[|\widetilde{Y}(t)|^4]&\le K(2,M,t_2-t_1)\left(\left(\int_0^{t_2-t_1} \E[|\widetilde{a}(t){Y}(t_1)|^{4}]^{\frac{1}{4}}dt\right)^{4}+\left(\int_0^{t_2-t_1} \E[|{Y}(t_1)|^{4}]^{\frac{1}{2}}dt\right)^{2}\right)\\
		&\le K(2,M,T)\left(\E[|{Y}(t_1)|^{4}]\left(\int_0^{t_2-t_1}|\widetilde{a}(t)| dt\right)^{4}+\E[|{Y}(t_1)|^{4}](t_2-t_1)^2\right)\\
		&\le K(2,M,T)\E[|Y(t_1)|^{4}](t_2-t_1)^2\left(MT^2+1\right),
	\end{align*}
where we used the fact that $T \mapsto K(2,M,T)$ is increasing and $t_2-t_1 \le T$.
	Calling back that
	\begin{equation*}
		\E[|Y(t_1)|^4]=e^{4A(t_1)}\E[G^4(t_1)]\le \left(\sup_{t \in [0,T]}e^{4A(t)}\right)e^{6T}
	\end{equation*}
	by Equation \eqref{eq:momgeom} and setting $C_5(M,T):=\left(MT^2+1\right)K(2,M,T)\left(\sup_{t \in [0,T]}e^{4A(t)}\right)e^{6T}$ we get
	\begin{align*}
		\E[|Y(t_2)-Y(t_1)|^4]=\E[|\widetilde{Y}(t_2-t_1)|^4]\leq \sup_{t \in [0,t_1-t_2]}\E[|\widetilde{Y}(t)|^4]\le C_5(M,T)(t_2-t_1)^2.
	\end{align*}
	Going back to Equation \eqref{eq:passdist2}, setting $\gamma_2=\frac{\overline{p}}{2}$, where $\gamma_2>1$ since $\overline{p}>2$, we have
	\begin{equation}\label{eq:passdist4}
		I_1 \le  (C_5(M,T))^{\frac{\overline{p}}{4}}(t_2-t_1)^{\gamma_2}\E\left[\left|\int_0^{t_2}\frac{e^{-A(s)}}{G(s)}\overline{u}_{\delta,\Psi}(s)ds\right|^{\frac{4\overline{p}}{4-\overline{p}}}\right]^{\frac{4-\overline{p}}{4}}.
	\end{equation}
	Now we have to estimate the second factor. Let us first use Jensen's inequality (observing that $\frac{4\overline{p}}{4-\overline{p}}>1$) to get
\begin{align*}
	\E\left[\left|\int_0^{t_2}\frac{e^{-A(s)}}{G(s)}\overline{u}_{\delta,\Psi}(s)ds\right|^{\frac{4\overline{p}}{4-\overline{p}}}\right]&\le t_2^{\frac{4\overline{p}}{4-\overline{p}}-1}\E\left[\int_0^{t_2}\frac{e^{-\frac{4\overline{p}}{4-\overline{p}}A(s)}}{G^\frac{4\overline{p}}{4-\overline{p}}(s)}|\overline{u}_{\delta,\Psi}(s)|^{\frac{4\overline{p}}{4-\overline{p}}}ds\right]\\
	&\le T^{\frac{4\overline{p}}{4-\overline{p}}-1}\E\left[\int_0^{T}e^{\frac{4\overline{p}}{4-\overline{p}}(s-A(s))}|G^\prime(s)|^\frac{4\overline{p}}{4-\overline{p}}|\overline{u}_{\delta,\Psi}(s)|^{\frac{4\overline{p}}{4-\overline{p}}}ds\right]\\
	&\le T^{\frac{4\overline{p}}{4-\overline{p}}-1}\left(\sup_{t \in [0,T]}e^{\frac{4\overline{p}}{4-\overline{p}}(s-A(s))}\right)\E\left[\sup_{s \in [0,T]}|G^\prime(s)|^\frac{4\overline{p}}{4-\overline{p}}\right]\int_0^{T}|\overline{u}_{\delta,\Psi}(s)|^{\frac{4\overline{p}}{4-\overline{p}}}ds.
\end{align*}
	By Doob's maximal inequality we have
	\begin{equation*}
		\E\left[\sup_{s \in [0,T]}|G^\prime(s)|^\frac{4\overline{p}}{4-\overline{p}}\right]\le \left(\frac{4\overline{p}}{5\overline{p}-4}\right)^{\frac{4\overline{p}}{4-\overline{p}}}e^{\frac{2\overline{p}}{4-\overline{p}}\left(\frac{4\overline{p}}{4-\overline{p}}-1\right)T}.
	\end{equation*}
	On the other hand, let us observe that, being $\overline{p}<\frac{4\widetilde{p}}{4+\widetilde{p}}$, it holds $\frac{\widetilde{p}(4-\overline{p})}{4\overline{p}}>1$, thus we can use it as exponent for H\"older's inequality, obtaining
	\begin{equation*}
		\int_0^{T}|\overline{u}_{\delta,\Psi}(s)|^{\frac{4\overline{p}}{4-\overline{p}}}ds\le \left(\int_0^{T}|\overline{u}_{\delta,\Psi}(s)|^{\widetilde{p}}ds\right)^{\frac{4 \overline{p}}{\widetilde p(4-\overline{p}) }}T^{1-\frac{4 \overline{p}}{\widetilde p(4-\overline{p})}}\le \left(\int_0^{T}|\overline{u}(s)|^{\widetilde{p}}ds\right)^{\frac{4 \overline{p}}{\widetilde p(4-\overline{p})}}T^{1-\frac{4 \overline{p}}{\widetilde p(4-\overline{p})}}.
	\end{equation*}
 Hence we get, from Equation \eqref{eq:passdist4},
\begin{align}\label{eq:passdist5}
	\begin{split}
	I_1 &\le  (C_5(M,T))^{\frac{\overline{p}}{4}}(t_1-t_2)^{\gamma_2}T^{\overline{p}-\frac{\overline{p}}{\widetilde{p}}}\left(\sup_{t \in [0,T]}e^{-\overline{p}A(s)}\right)\left(\frac{4\overline{p}}{5\overline{p}-4}\right)^{\overline{p}}e^{\frac{\overline{p}}{2}\left(\frac{4\overline{p}}{4-\overline{p}}+1\right)T}\left(\int_0^{T}|\overline{u}(s)|^{\widetilde{p}}ds\right)^{\frac{\overline{p}}{\widetilde{p}}}\\
	&=:C_6(M,T,\widetilde{p},\overline{p},p_1,\overline{u})(t_2-t_1)^{\gamma_2}.
\end{split}
\end{align}
Now set $\gamma=\min\{\gamma_1,\gamma_2\}>1$ and combine Equations \eqref{eq:passdist6} and \eqref{eq:passdist5} with \eqref{eq:passdist7} to conclude that there exists $C_7(M,T,\widetilde{p},\overline{p},p_1,\overline{u})>0$ (notice that $C_7$ does not depend on $t_1,t_2$) such that
\begin{equation*}
	\E[|X_\delta(t_2)-X_\delta(t_1)|^{\overline{p}}]\le C_7(M,T,\widetilde{p},\overline{p},p_1,\overline{u})(t_2-t_1)^\gamma.
\end{equation*}
This, together with the fact that $X_\delta(0)=0$ for any $\delta>0$, guarantees that $\{X_\delta\}_{\delta>0}$ is tight (see, for instance, \cite[Theorem $11.6.5$]{whitt2002stochastic}). Thus, by a Corollary of Prohorov's theorem (see, for instance, \cite[Corollary $11.6.2$]{whitt2002stochastic}), we know that $X_{\delta_n} \Rightarrow \cS_{X_0}\overline{u}$ in $C([0,T])$, concluding the proof.
\end{proof}
\begin{rmk}
	The previous theorem clearly holds even if $\Psi(t)=Ct^{\widetilde{p}}$ for some constant $C>0$.
\end{rmk}
Theorem \ref{thm:strongconv} guarantees that even if $\overline{u}_{\delta,\Psi}$ does not converge strongly to $\overline{u}$ (due, for instance, to a highly oscillatory behaviour), it can be still used to approximate the process $\cS_{X_0} \overline{u}$. This comes in handy in the application context, whenever one has to numerically determine some functional properties of $\cS_{X_0} \overline{u}$. Moreover, in the proof of the previous theorem, we have also shown that if $\Psi \in \Delta_2$ is a Young function and $\Norm{\overline{u}_{\delta,\Psi}}{L^{\widetilde{p}}(0,T)}$ is uniformly bounded for some $\widetilde{p}>2\sqrt{2}$, then the family $\{X_{\delta}\}_{\delta>0}$ is tight. This means that, in this case, by Prohorov's theorem, $\{X_{\delta}\}_{\delta>0}$ is relatively compact, i.e. there exists a process $\overline{X}$ with a.s. continuous sample paths and a sequence $\delta_n \to 0$ such that $X_{\delta_n}\Rightarrow \overline{X}$. Combining the latter observation with Theorem \ref{thm:converg} we have that not only in this case $J$ admits a minimizer, but there exists a sequence $\delta_n \to 0$ such that $\overline{u}_{\delta_n,\Psi}\rightharpoonup \overline{u}$, $X_{\delta_n}\Rightarrow \overline{X}$ and $\overline{X}=\cS_{X_0}\overline{u}$.\\
Let us now exhibit a necessary and sufficient condition for a function $\overline{u}_{\delta,\Psi}$ to be a minimizer  of $J_{\delta,\Psi}$, given in terms of an Euler-Lagrange type equation.
\begin{thm}\label{thm:ELpen}
	Let hypotheses (H1) to (H5) hold and $\Psi$ be a Young function with strictly increasing continuous derivative $\psi$. Then $\overline{u}_{\delta,\Psi}$ is the unique solution of
	\begin{equation}\label{eq:ELpen}
		\delta \frac{\overline{u}_{\delta,\Psi}(t_0)}{|\overline{u}_{\delta,\Psi}(t_0)|} \psi(|\overline{u}_{\delta,\Psi}(t_0)|)=\int_{t_0}^T\E\left[\pd{F}{\xi}(t,\xi_{\overline{u}_{\delta,\Psi}}(t))e^{A(t)-A(t_0)}\frac{G(t)}{G(t_0)}\right]dt, \ \forall t_0 \in [0,T].
	\end{equation}
\end{thm}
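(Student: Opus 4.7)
The plan is to mimic the needle-variation argument of Theorem \ref{thm:EL} after adding the contribution coming from the penalty term $\delta\cF_{\Psi}$, and then to exploit strict convexity to conclude uniqueness.

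Let $\overline{u}_{\delta,\Psi}$ be the minimizer of $J_{\delta,\Psi}$ provided by Theorem \ref{thm:minim}; in particular $\overline{u}_{\delta,\Psi}\in L^{\Psi}(0,T)$, so the non-negative function $t\mapsto\Psi(|\overline{u}_{\delta,\Psi}(t)|)$ belongs to $L^1(0,T)$. Fix $u\in\R$ and consider the same needle variation $u_\varepsilon$ as in the proof of Theorem \ref{thm:EL}, this time evaluated on $J_{\delta,\Psi}$: set $g(\varepsilon)=J_{\delta,\Psi}[u_\varepsilon]=J[u_\varepsilon]+\delta\cF_{\Psi}[u_\varepsilon]$. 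The first summand contributes, by the very computation performed in Theorem \ref{thm:EL}, the term
\begin{equation*}
-(u-\overline{u}_{\delta,\Psi}(t_0))\int_{t_0}^T\E\left[\pd{F}{\xi}(t,\xi_{\overline{u}_{\delta,\Psi}}(t))e^{A(t)-A(t_0)}\frac{G(t)}{G(t_0)}\right]dt
\end{equation*}
in the limit $\varepsilon\to 0^+$. For the second summand, since $u_\varepsilon$ differs from $\overline{u}_{\delta,\Psi}$ only on $I_\varepsilon$,
\begin{equation*}
\frac{\delta\,[\cF_{\Psi}[u_\varepsilon]-\cF_{\Psi}[\overline{u}_{\delta,\Psi}]]}{\varepsilon}=\frac{\delta}{\varepsilon}\int_{I_\varepsilon}\bigl[\Psi(|u|)-\Psi(|\overline{u}_{\delta,\Psi}(t)|)\bigr]dt,
\end{equation*}
which, by the Lebesgue differentiation theorem applied at any Lebesgue point $t_0$ of $t\mapsto\Psi(|\overline{u}_{\delta,\Psi}(t)|)$, converges to $\delta\bigl[\Psi(|u|)-\Psi(|\overline{u}_{\delta,\Psi}(t_0)|)\bigr]$.

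Choose $t_0$ in the intersection $E$ of the Lebesgue sets of $\overline{u}_{\delta,\Psi}$, of $L$, and of $\Psi(|\overline{u}_{\delta,\Psi}|)$; since $0$ is a minimum of $g$, the right derivative is non-negative, i.e.
\begin{equation*}
\delta\bigl[\Psi(|u|)-\Psi(|\overline{u}_{\delta,\Psi}(t_0)|)\bigr]-(u-\overline{u}_{\delta,\Psi}(t_0))\int_{t_0}^T\E\left[\pd{F}{\xi}(t,\xi_{\overline{u}_{\delta,\Psi}}(t))e^{A(t)-A(t_0)}\frac{G(t)}{G(t_0)}\right]dt\ge 0
\end{equation*}
for every $u\in\R$. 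Introducing $H(u):=\delta\Psi(|u|)-u\int_{t_0}^T\E[\cdots]dt$, the previous inequality says that $\overline{u}_{\delta,\Psi}(t_0)$ is a global minimizer of $H$. Since $u\mapsto\Psi(|u|)$ is everywhere differentiable with derivative $\frac{u}{|u|}\psi(|u|)$ by Remark \ref{remark_u0}, Fermat's theorem yields $H'(\overline{u}_{\delta,\Psi}(t_0))=0$, which is exactly Equation \eqref{eq:ELpen} at $t_0$. To extend the identity from the dense set $E$ to the whole interval $[0,T]$ we argue as in the final part of Theorem \ref{thm:EL}: the right-hand side of \eqref{eq:ELpen} is continuous in $t_0$, and the map $u\in\R\mapsto\frac{u}{|u|}\psi(|u|)$ is continuous and strictly increasing (hence invertible onto $\R$ thanks to the defining properties of $\psi$), so the identity determines a continuous representative of $\overline{u}_{\delta,\Psi}$ on $[0,T]$.

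For uniqueness, observe that $\psi$ strictly increasing forces $\Psi$ to be strictly convex, so $\cF_{\Psi}$ is strictly convex on $L^\Psi(0,T)$; together with the convexity of $J$ from Proposition \ref{prop:convex}, this makes $J_{\delta,\Psi}$ strictly convex, so its global minimizer (whose existence is granted by Theorem \ref{thm:minim}) is unique. The convexity of $J_{\delta,\Psi}$ moreover implies, exactly as in Theorem \ref{thm:suffcond}, that any solution of \eqref{eq:ELpen} is a critical point and hence a global minimizer of $J_{\delta,\Psi}$; strict convexity then forces such a solution to coincide with $\overline{u}_{\delta,\Psi}$. The only potentially delicate step is the identification of the limit of the penalty's incremental ratio, which is however harmless once one knows $\Psi(|\overline{u}_{\delta,\Psi}|)\in L^1(0,T)$; everything else is a direct adaptation of Theorem \ref{thm:EL} and Proposition \ref{prop:convex}.
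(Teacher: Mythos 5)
Your proposal is correct and follows essentially the same route as the paper's proof: the same needle variation split into $J[u_\varepsilon]+\delta\cF_\Psi[u_\varepsilon]$, the Lebesgue-point limit $\Psi(|u|)-\Psi(|\overline{u}_{\delta,\Psi}(t_0)|)$ for the penalty's incremental ratio, Fermat's theorem applied to the pointwise Hamiltonian $H$, extension to all of $[0,T]$ by continuity of the right-hand side, and the convexity argument (as in Theorem \ref{thm:suffcond} plus the subgradient inequality for $\Psi$) to show sufficiency and hence uniqueness via strict convexity. Your added remark that $u\mapsto\frac{u}{|u|}\psi(|u|)$ is a continuous strictly increasing bijection of $\R$, so that the identity selects a continuous representative of $\overline{u}_{\delta,\Psi}$, is a welcome clarification of the extension step that the paper leaves implicit.
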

\begin{proof}
	The proof follows as in Theorem \ref{thm:EL}. Precisely, let $E_{\overline{u}}$ be the set of Lebesgue points of $\overline{u}_{\delta,\Psi}$ in $(0,T)$, $E_L$ the set of Lebesgue points of $L$ in $(0,T)$ and $E=E_{\overline{u}}\cup E_L$. Let $t_0 \in E$, fix a real number $u \in \R$ and $\varepsilon_0>0$ small enough to have $\left(t_0-\frac{\varepsilon_0}{2},t_0+\frac{\varepsilon_0}{2}\right)\subset (0,T)$. Define, for any $\varepsilon \in (0,\varepsilon_0)$, $I_\varepsilon:=\left(t_0-\frac{\varepsilon}{2},t_0+\frac{\varepsilon}{2}\right)$, 
	\begin{equation*}
		u_\varepsilon(t)=\begin{cases} u & t \in I_\varepsilon \\\overline{u}_{\delta,\Psi}(t) &\mbox{otherwise},
		\end{cases}
	\end{equation*}
$g_1(\varepsilon):=J_{\delta,\Psi}[u_\varepsilon]$, $g_2(\varepsilon):=J[u_\varepsilon]$ and $g_3(\varepsilon):=\cF_\Psi[u_\varepsilon]$, so that $g_1=g_2+\delta g_3$ and
\begin{equation*}
	\frac{g_1(\varepsilon)-g_1(0)}{\varepsilon}=\frac{g_2(\varepsilon)-g_2(0)}{\varepsilon}+\delta \frac{g_3(\varepsilon)-g_3(0)}{\varepsilon}.
\end{equation*} 
Let us only study the second incremental ratio. It holds
\begin{equation*}
	\frac{g_3(\varepsilon)-g_3(0)}{\varepsilon}=\frac{1}{\varepsilon}\int_0^T (\Psi(u_\varepsilon(t))-\Psi(\overline{u}_{\delta,\Psi}(t)))dt=\frac{1}{\varepsilon}\int_{I_\varepsilon} (\Psi(u)-\Psi(\overline{u}_{\delta,\Psi}(t)))dt.
\end{equation*}
Being $\Psi \in \Delta_2$,
$\Psi(u)-\Psi(\overline{u}_{\delta,\Psi}(t))$ belongs to $L^1(0,T)$, hence we can consider $E_\Psi$ as the set of Lebesgue points of $\Psi(\overline{u}_{\delta,\Psi}(t))$ and $E^\prime=E \cap E_\Psi$. From now on, let us assume that $t_0 \in E^\prime$. Taking the limit we obtain
\begin{align*}
	\lim_{\varepsilon \to 0}\frac{g_3(\varepsilon)-g_3(0)}{\varepsilon}&=\Psi(u)-\Psi(\overline{u}_{\delta,\Psi}(t_0)).
\end{align*}
 On the other hand, we have, from Equation \eqref{eq:derg},
 \begin{equation*}
 	\lim_{\varepsilon \to 0}\frac{g_2(\varepsilon)-g_2(0)}{\varepsilon}=-(u-\overline{u}_{\delta,\Psi}(t_0))\int_{t_0}^T\E\left[\pd{F}{\xi}(t,\xi_{\overline{u}_{\delta,\Psi}}(t))e^{A(t)-A(t_0)}\frac{G(t)}{G(t_0)}\right]dt,
 \end{equation*}
with the notation introduced in Theorem \ref{thm:EL}. Hence we conclude that
\begin{equation*}
	\lim_{\varepsilon \to 0}\frac{g_1(\varepsilon)-g_1(0)}{\varepsilon}=\delta\Psi(u)-\delta\Psi(\overline{u}_{\delta,\Psi}(t_0))-(u-\overline{u}_{\delta,\Psi}(t_0))\int_{t_0}^T\E\left[\pd{F}{\xi}(t,\xi_{\overline{u}_{\delta,\Psi}}(t))e^{A(t)-A(t_0)}\frac{G(t)}{G(t_0)}\right]dt.
\end{equation*}
However, we know that $0$ is a minimum point of $g_1$, thus it holds
\begin{equation}\label{eq_H1}
	\delta \Psi(|u|)-\delta\Psi(|\overline{u}_{\delta,\Psi}(t_0)|)-(u-\overline{u}_{\delta,\Psi}(t_0))\int_{t_0}^T\E\left[\pd{F}{\xi}(t,\xi_{\overline{u}_{\delta,\Psi}}(t))e^{A(t)-A(t_0)}\frac{G(t)}{G(t_0)}\right]dt\ge 0.
\end{equation}
Setting 
\begin{equation*}
	H(u):=-\delta \Psi(|u|)+u\int_{t_0}^T\E\left[\pd{F}{\xi}(t,\xi_{\overline{u}_{\delta,\Psi}}(t))e^{A(t)-A(t_0)}\frac{G(t)}{G(t_0)}\right]dt,
\end{equation*}
by Equation \ref{eq_H1} it holds $\max_{u \in \R}H(u)=H(\overline{u}_{\delta,\Psi}(t_0))$. 
By Remark \ref{remark_u0} we know that $H$ is differentiable and then by Fermat's theorem $H^\prime(\overline{u}_{\delta,\Psi}(t_0))=0$, that is to say 
\begin{equation*}
	\delta \frac{\overline{u}_{\delta,\Psi}(t_0)}{|\overline{u}_{\delta,\Psi}(t_0)|}\psi(|\overline{u}_{\delta,\Psi}(t_0)|)=\int_{t_0}^T\E\left[\pd{F}{\xi}(t,\xi_{\overline{u}_{\delta,\Psi}}(t))e^{A(t)-A(t_0)}\frac{G(t)}{G(t_0)}\right]dt, \ \forall t_0 \in E^\prime.
\end{equation*}
Finally, the right-hand side being continuous (as we have shown in the proof of Theorem \ref{thm:EL}), we achieve Equation \eqref{eq:ELpen}.\\

Now we have to show that the latter is also a sufficient condition. To do this, let $\overline{u}_{\delta,\Psi} \in L^1(0,T)$ be a solution of its and $u \in L^1(0,T)$ be any other function. If $u \not \in L^\Psi(0,T)$, then $+\infty=J_{\delta,\Psi}[u]\ge J_{\delta,\Psi}[\overline{u}_{\delta,\Psi}]$. Thus, let us consider $u \in L^\Psi(0,T)$. Observe that
\begin{equation*}
	J_{\delta,\Psi}[u]-J_{\delta,\Psi}[\overline{u}_{\delta,\Psi}]=J[u]-J[\overline{u}_{\delta,\Psi}]+\delta (\cF_{\Psi}[u]-\cF_\Psi[\overline{u}_{\delta,\Psi}]).
\end{equation*}
We already know, as it is shown in Theorem \ref{thm:suffcond}, that
\begin{equation}\label{eq:ineqJ}
	J[u]-J[\overline{u}_{\delta,\Psi}]\ge \int_0^T(u(s)-\overline{u}_{\delta,\Psi}(s))\left(-\int_s^T\E\left[\pd{F}{\xi}(t,\xi_{\overline{u}_{\delta,\Psi}}(t))e^{A(t)-A(s)}\frac{G(t)}{G(s)}\right]dt\right)ds.
\end{equation}
Moreover, by the convexity of $\Psi$, we get
\begin{equation*}
	\Psi(|u(s)|)-\Psi(|\overline{u}_{\delta,\Psi}(s)|)\ge \frac{\overline{u}_{\delta,\Psi}(s)}{|\overline{u}_{\delta,\Psi}(s)|}\psi(|\overline{u}_{\delta,\Psi}(s)|)(u(s)-\overline{u}_{\delta,\Psi}(s)), \ \forall s \in (0,T),
\end{equation*}
and then
\begin{equation}\label{eq:ineqF}
	\cF_{\Psi}[u]-\cF_\Psi[\overline{u}_{\delta,\Psi}]\ge \int_0^T \frac{\overline{u}_{\delta,\Psi}(s)}{|\overline{u}_{\delta,\Psi}(s)|}\psi(|\overline{u}_{\delta,\Psi}(s)|)(u(s)-\overline{u}_{\delta,\Psi}(s))ds. 
\end{equation}
Combining Equation \eqref{eq:ineqJ} and \eqref{eq:ineqF} we get
\begin{align*}
	J_{\delta,\Psi}[u]-J_{\delta,\Psi}[\overline{u}_{\delta,\Psi}]&\ge \int_0^T(u(s)-\overline{u}_{\delta,\Psi}(s))\\&\quad \times \left(\frac{\overline{u}_{\delta,\Psi}(s)}{|\overline{u}_{\delta,\Psi}(s)|}\psi(|\overline{u}_{\delta,\Psi}(s)|)-\int_s^T\E\left[\pd{F}{\xi}(t,\xi_{\overline{u}_{\delta,\Psi}}(t))e^{A(t)-A(s)}\frac{G(t)}{G(s)}\right]dt\right)ds=0,
\end{align*}
that is to say that $\overline{u}_{\delta,\Psi}$ is a minimizer for $J_{\delta,\Psi}$ in $L^1(0,T)$.\\
Hence we conclude that any solution of Equation \eqref{eq:ELpen} is a minimizer for $J_{\delta,\Psi}$ in $L^1(0,T)$. However, being $\Psi$ strictly convex, $\overline{u}_{\delta,\Psi}$ is the unique minimizer of $J_{\delta,\Psi}$. Thus $\overline{u}_{\delta,\Psi}$ is the unique solution of equation \eqref{eq:ELpen}.
\end{proof}
\begin{rmk}
	Let us stress out, as we did for $J$, that $\cF_\Psi$ is Gateaux-differentiable in $L^\Psi(0,T)$ with Gateaux derivative
	\begin{equation*}
		\partial_u \cF[v]=\int_0^T \frac{u(s)v(s)}{|u(s)|}\psi(|u(s)|)ds, \ v \in L^\Psi(0,T).
	\end{equation*}
One can show that the previous quantity is finite for any $v \in L^\Psi(0,T)$ by means of  H\"older's inequality for Orlicz spaces \cite[Theorem $4.7.5$]{pick2012function}. As a consequence, we obtain that also $J_{\delta,\Psi}[u]$ is Gateaux-differentiable in $L^\Psi(0,T)$ with Gateaux derivative
\begin{equation*}
	\partial_u J_{\delta,\Psi}[v]=\partial_u J[v]+\delta \partial_u \cF_\Psi[v]
\end{equation*}
and Equation \eqref{eq:ELpen} follows from Fermat's theorem. For this reason, we can refer to \eqref{eq:ELpen} as the Euler-Lagrange equation for $J_{\delta,\Psi}$. Let us emphasize that the function $H(u)$ defined in the proof of Theorem \ref{thm:ELpen} is, in some sense, the Hamiltonian function associated to $J_{\delta,\Psi}$.
\end{rmk}
With a suitable choice of the Young function $\Psi$, we can guarantee better regularity for $\overline{u}_{\delta,\Psi}$.
\begin{cor}
	Let hypotheses (H1) to (H5) hold and set $\Psi(t)=\frac{t^{2n}}{2n}$ for any $n=1,2,3,\dots$. Then $\overline{u}_{\delta,\Psi}$ is continuous.
\end{cor}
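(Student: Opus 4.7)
The plan is to apply the Euler-Lagrange equation from Theorem \ref{thm:ELpen} and exploit the particularly simple form of its right-hand side for this specific $\Psi$. First I would verify that $\Psi(t)=t^{2n}/(2n)$ satisfies all the hypotheses of Theorem \ref{thm:ELpen}: it is a Young function with $\psi(t)=t^{2n-1}$, which is continuous and strictly increasing on $[0,+\infty)$, so the theorem applies. Note also that $\Psi(2t)=2^{2n}\Psi(t)$, hence $\Psi\in\Delta_2$, so the previous existence results are available.

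Next I would rewrite the left-hand side of \eqref{eq:ELpen} in a cleaner form. For any $u\in\R$,
\begin{equation*}
\frac{u}{|u|}\psi(|u|)=\operatorname{sign}(u)\,|u|^{2n-1}=u^{2n-1},
\end{equation*}
where the convention $u^{2n-1}=0$ at $u=0$ is compatible with Remark \ref{remark_u0}, since $2n-1$ is odd. Thus the Euler-Lagrange equation \eqref{eq:ELpen} becomes
\begin{equation*}
\delta\,\overline{u}_{\delta,\Psi}(t_0)^{2n-1}=\int_{t_0}^T\E\left[\pd{F}{\xi}(t,\xi_{\overline{u}_{\delta,\Psi}}(t))\,e^{A(t)-A(t_0)}\frac{G(t)}{G(t_0)}\right]dt,\qquad t_0\in[0,T].
\end{equation*}

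Now I would invoke the continuity of the right-hand side as a function of $t_0$, which was already established inside the proof of Theorem \ref{thm:EL} (the argument splitting the difference into the terms $I_4(\delta)$ and $I_5(\delta)$ and applying the absolute continuity of the Lebesgue integral together with dominated convergence). Thus the map $t_0\mapsto\delta\,\overline{u}_{\delta,\Psi}(t_0)^{2n-1}$ is continuous on $[0,T]$.

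Finally, to pass from the continuity of $\overline{u}_{\delta,\Psi}^{2n-1}$ to that of $\overline{u}_{\delta,\Psi}$ itself, I would use that the map $\Phi:\R\to\R$, $\Phi(u)=u^{2n-1}$, is a continuous bijection whose inverse $\Phi^{-1}(y)=\operatorname{sign}(y)|y|^{1/(2n-1)}$ is also continuous on all of $\R$. Composing, $\overline{u}_{\delta,\Psi}(t_0)=\Phi^{-1}\!\big(\delta^{-1}\cdot\text{RHS}(t_0)\big)$ is continuous on $[0,T]$, which concludes the proof. There is no real obstacle here beyond checking the $\Delta_2$ condition and the compatibility of the convention at $u=0$; the result is essentially a direct consequence of the regularity already embedded in Theorem \ref{thm:ELpen} plus the invertibility of an odd power.
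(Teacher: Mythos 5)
Your proof is correct and follows essentially the same route as the paper: plug $\psi(t)=t^{2n-1}$ into the Euler--Lagrange equation \eqref{eq:ELpen}, use the continuity of the right-hand side established in the proof of Theorem \ref{thm:EL}, and invert the odd power (the paper writes this as taking the $(2n-1)$-th root, which is exactly your $\Phi^{-1}$). Your additional checks of the $\Delta_2$ condition and the sign convention at $u=0$ are harmless elaborations of what the paper leaves implicit.
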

\begin{proof}
	With this choice, we have $\psi(t)=t^{2n-1}$ and then Equation \eqref{eq:ELpen} becomes
	\begin{equation*}
		\delta \frac{\overline{u}_{\delta,\Psi}(t_0)}{|\overline{u}_{\delta,\Psi}(t_0)|} |\overline{u}_{\delta,\Psi}(t_0)|^{2n-1}=\int_{t_0}^T\E\left[\pd{F}{\xi}(t,\xi_{\overline{u}_{\delta,\Psi}}(t))e^{A(t)-A(t_0)}\frac{G(t)}{G(t_0)}\right]dt, \ \forall t_0 \in [0,T],
	\end{equation*}
that can be recast as
\begin{equation*}
\overline{u}_{\delta,\Psi}(t_0)=\sqrt[2n-1]{\frac{1}{\delta}\int_{t_0}^T\E\left[\pd{F}{\xi}(t,\xi_{\overline{u}_{\delta,\Psi}}(t))e^{A(t)-A(t_0)}\frac{G(t)}{G(t_0)}\right]dt}, \ \forall t_0 \in [0,T].
\end{equation*}
Being the right-hand side continuous, as proved in Theorem \ref{thm:EL}, we conclude the proof.
\end{proof}
\section{Examples}
In this section we provide some examples,  to highlight on one hand some expected features of the approximation problem while, on the other hand, show some unexpected behaviours even in the easier cases. First we consider the general case of power costs. Then we will focus on the quadratic cost, that is to say the mean squared error approximation functional. In this specific case we are able to restate the Euler-Lagrange equation as a first kind Fredholm equation. The latter property allows us to give some explicit examples via numerical methods.
\subsection{Power cost functionals}
Let us consider $F^{(p)}(\xi)=\frac{|\xi|^p}{p}$ for any $p \ge 2$ and the cost functional $J^{(p)}[u]=\E[\int_0^TF^{(p)}(\xi_u(t))dt]$. For such cost functionals, we are able to prove the following Proposition.
\begin{prop}\label{prop:hyp}
	Fix $p \ge 2$ and let $z \in \cL^2_{\widetilde{p}}(\Omega,\bP;[0,T])$ for some $\widetilde{p}>p$. Then $J^{(p)}$ satisfies hypotheses \textit{(H1)} to \textit{(H5+)}.
\end{prop}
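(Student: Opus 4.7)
The plan is to take the exponent in (H1) to be $\widetilde p$ itself and verify each of (H1)--(H5+) directly, exploiting the fact that $F^{(p)}$ does not depend on $t$ and has a very explicit form. The only quantitative subtlety is the matching of exponents: once (H1) is anchored at $\widetilde p$, hypothesis (H4) asks for a growth exponent $\alpha \in (0,\widetilde p)$, and I will choose $\alpha = p$, which lies strictly inside this interval precisely because $\widetilde p > p$.

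Hypothesis (H1) is immediate by the definition of $\cL^2_{\widetilde p}(\Omega,\bP;[0,T])$, and (H2) is trivial from $|\xi|^p/p \ge 0$. For (H3), I compute $\pd{F^{(p)}}{\xi}(\xi)=|\xi|^{p-2}\xi$ and $\pdsup{F^{(p)}}{\xi}{2}(\xi)=(p-1)|\xi|^{p-2}$. For $p \ge 2$ both expressions are continuous functions of $\xi$ (the potential issue at $\xi=0$ is resolved by $p-2\ge 0$); since there is no $t$-dependence, joint continuity in $(t,\xi)$ is automatic.

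For (H4), the elementary bounds $|\xi|^{p-2},|\xi|^{p-1} \le 1+|\xi|^p$ valid for $p \ge 2$ give
\begin{equation*}
|F^{(p)}(\xi)|+\left|\pd{F^{(p)}}{\xi}(\xi)\right|+\left|\pdsup{F^{(p)}}{\xi}{2}(\xi)\right| \le C_p(1+|\xi|^p),
\end{equation*}
so one may take $L(t)\equiv C_p \in L^1(0,T)$ and $\alpha = p < \widetilde p$. Finally, (H5+) reduces to strict monotonicity of $\xi \mapsto |\xi|^{p-2}\xi$ on $\R$, which holds for $p>1$: on each half-line this function equals, up to sign, $|\xi|^{p-1}$ with $p-1 \ge 1$, hence strictly increasing, and the two branches match at $0$; strict monotonicity of $\partial_\xi F^{(p)}$ is equivalent to strict convexity of $F^{(p)}$.

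No step presents a genuine obstacle; the only thing to keep track of is the interplay between $p$, $\widetilde p$, and $\alpha$ required by the controlled growth condition (H4).
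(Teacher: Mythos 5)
Your proposal is correct and follows essentially the same route as the paper: anchor (\emph{H1}) at the exponent $\widetilde{p}$, compute the two derivatives of $F^{(p)}$ explicitly, take $L$ constant and $\alpha=p<\widetilde{p}$ in (\emph{H4}), and invoke strict convexity for (\emph{H5+}). The only difference is that you spell out a few details the paper leaves implicit (continuity of $|\xi|^{p-2}$ at $\xi=0$ and the derivation of strict convexity from strict monotonicity of $\partial_\xi F^{(p)}$), which is harmless.
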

\begin{proof}
Being $z \in \cL^2_{\widetilde{p}}(\Omega,\bP;[0,T])$, hypothesis (\emph{H1}) is satisfied with exponent $\widetilde{p}>p\ge 2$. Clearly, $F^{(p)}(\xi)\ge 0$ and hypothesis (\emph{H2}) is satisfied. Moreover, since $p \ge 2$, $F^{(p)}$ is twice continuously differentiable in $\xi$ with
\begin{align*}
	\der{F^{(p)}}{\xi}(\xi)=|\xi|^{p-2}\xi && \dersup{F^{(p)}}{\xi}{2}(\xi)=(p-1)|\xi|^{p-2},
\end{align*}
thus we get hypothesis (\emph{H3}). Next, there exists a constant $L>0$ such that
\begin{equation*}
	F^{(p)}(\xi)+\left|\der{F^{(p)}}{\xi}(\xi)\right|+\left|\dersup{F^{(p)}}{\xi}{2}(\xi)\right|\le L(1+|\xi|^p).
\end{equation*}
As $p<\widetilde{p}$, we achieve hypothesis (\emph{H4}) with exponent $\alpha=p$. Finally, hypothesis (\emph{H5+}) is satisfied due to the fact that $F^{(p)}(\xi)$ is strictly convex.
\end{proof}
The Euler-Lagrange equation \eqref{eq:EL} for the functional $J^{(p)}$ can be stated as
\begin{equation}\label{eq:ELp}
	\int_t^T \E\left[|\xi_{\overline{u}}(\tau)|^{p-2}\xi_{\overline{u}}(\tau) e^{A(\tau)-A(t)}\frac{G(\tau)}{G(t)}\right]d\tau=0, \ \forall t \in [0,T].
\end{equation}
To introduce a penalization on the functional $J^{(p)}$, let us consider $\Psi^{(2)}(x)=\frac{x^2}{2}$, $\cF^{(2)}:=\cF_{\Psi^{(2)}}$ and \linebreak  $J_{\delta}^{(p,2)}:=J^{(p)}+\delta \cF^{(2)}$. Its Euler-Lagrange equation \eqref{eq:ELpen} can be recast as
\begin{equation}\label{eq:ELppen}
	\overline{u}_{\delta}(t)=\frac{1}{\delta}\int_t^T \E\left[|\xi_{\overline{u}_{\delta}}(\tau)|^{p-2}\xi_{\overline{u}_{\delta}}(\tau) e^{A(\tau)-A(t)}\frac{G(\tau)}{G(t)}\right]d\tau, \ \forall t \in [0,T].
\end{equation}
Due to the nature of such equation, we speculate that, for $p>2$, iteration methods to obtain its solution could be developed. Further investigation on the topic is needed.\\
In the next subsections, we will focus on the case $p=2$, in which, as we said before, we are able to restate both equations \eqref{eq:ELp} and \eqref{eq:ELppen} in a more tractable form.
\subsection{The least mean squared error approximation: reduction to Fredholm equations}
 Indeed, let us mainly focus on the case $F^{(2)}(\xi)=\frac{\xi^2}{2}$, i.e. the least mean squared error approximation. In particular, let us denote by $J^{(2)}$ the cost functional defined as
\begin{equation*}
	J^{(2)}[u]=\E\left[\int_0^T \frac{\xi^2_{u}(t)}{2}dt\right].
\end{equation*}
By Proposition \ref{prop:hyp} we know that, if $z \in \cL^2_{p}(\Omega,\bP;[0,T])$ for any $p>2$, then hypotheses (\textit{H1}) to (\textit{H5+}) are satisfied and the Euler-Lagrange equation is given by \eqref{eq:ELp}. Actually, in this case, we can restate the equation as a Fredholm integral equation of the first kind.
\begin{prop}\label{prop:FE1}
	Let $z \in \cL_p^2(\Omega,\bP;[0,T])$ for some $p>2$. Then $\overline{u} \in L^1(0,T)$ is the unique solution of the minimization problem
	\begin{equation}\label{minprob}
		J^{(2)}[\overline{u}]=\min_{u \in L^1(0,T)}J^{(2)}[u]
	\end{equation}
	if and only if
	\begin{equation}\label{eq:EL23}
		\int_0^T k(t_0,s;a)\overline{u}(s)ds=\cZ(t_0), \ \forall t_0 \in [0,T],
	\end{equation}
where
\begin{equation}\label{eq:kdef}
	k(t,s;a)=e^{-A(t)-A(s)-\max\{t,s\}}\int_{\max\{t,s\}}^{T}e^{2A(\tau)+\tau}d\tau
\end{equation}
and
\begin{equation}\label{eq:cZdef}
	\cZ(t)=e^{-A(t)}\int_t^T \int_0^\tau e^{2A(\tau)-A(s)}\E\left[\frac{G^2(\tau)}{G(t)G(s)}z(s)\right]dsd\tau.
\end{equation}
\end{prop}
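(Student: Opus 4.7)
The plan is to derive the Fredholm equation \eqref{eq:EL23} by specializing the general Euler-Lagrange equation \eqref{eq:EL} to $F^{(2)}(\xi) = \xi^2/2$ and then explicitly computing the Gaussian expectations generated by the solution map \eqref{eq:solutionmap}. First, by Proposition \ref{prop:hyp} the cost $J^{(2)}$ satisfies (H1)--(H5+), so combining Theorem \ref{thm:EL}, Theorem \ref{thm:suffcond} and the uniqueness corollary following Proposition \ref{prop:convex} we obtain that $\overline{u}$ solves \eqref{minprob} if and only if it is the unique solution of
\begin{equation*}
\int_{t_0}^T \E\left[\xi_{\overline{u}}(t)\, e^{A(t)-A(t_0)}\frac{G(t)}{G(t_0)}\right] dt = 0, \quad \forall t_0 \in [0,T].
\end{equation*}
Plugging in $\xi_{\overline{u}}(t) = \cS_0(z-\overline{u})(t) = G(t)e^{A(t)} \int_0^t \frac{e^{-A(s)}}{G(s)} (z(s)-\overline{u}(s))\,ds$ and using Fubini to swap the inner $ds$-integral with the expectation, this is equivalent to
\begin{equation*}
\int_{t_0}^T e^{2A(t)-A(t_0)} \int_0^t e^{-A(s)} \E\left[\frac{G^2(t)}{G(t_0)G(s)}(z(s)-\overline{u}(s))\right] ds\, dt = 0.
\end{equation*}

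Next I would separate the $z$-piece and the (deterministic) $\overline{u}$-piece and evaluate the purely Brownian expectation attached to the latter. Writing $G(t) = e^{W(t)-t/2}$, the random variable $Y := 2W(t)-W(t_0)-W(s)$ is centered Gaussian, and using $\Cov(W(a),W(b)) = \min(a,b)$ together with $t \ge t_0$ and $t \ge s$ throughout the domain of integration, a direct variance calculation splitting on the cases $s \le t_0$ and $s > t_0$ yields $\Var(Y)/2 + (-t + t_0/2 + s/2) = t - \max(t_0,s)$. Hence
\begin{equation*}
\E\!\left[\frac{G^2(t)}{G(t_0)G(s)}\right] = e^{\,t - \max(t_0,s)}.
\end{equation*}

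Once this identity is in hand, the $\overline{u}$-term becomes a fully deterministic double integral on $\{t_0 \le t \le T,\ 0 \le s \le t\}$. Rewriting this region as $\{0 \le s \le T,\ \max(t_0,s) \le t \le T\}$ and applying Fubini to pull $\overline{u}(s)$ outside gives
\begin{equation*}
\int_0^T \overline{u}(s)\,e^{-A(t_0)-A(s)-\max\{t_0,s\}}\!\int_{\max\{t_0,s\}}^T \!\! e^{2A(\tau)+\tau}\,d\tau\,ds = \int_0^T k(t_0,s;a)\,\overline{u}(s)\,ds,
\end{equation*}
recognizing \eqref{eq:kdef}. The untouched $z$-piece, after relabeling $t \to \tau$, is exactly $\cZ(t_0)$ as defined in \eqref{eq:cZdef}. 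Bringing the $\overline{u}$-term to one side and the $z$-term to the other yields \eqref{eq:EL23}, and every step is an equivalence, so both implications of the proposition follow simultaneously.

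The routine but genuinely nontrivial point is justifying each of the two Fubini applications. For the $\overline{u}$-part this reduces to deterministic integrability, controlled by $\overline{u}\in L^1(0,T)$, the $L^\infty$-bound on $a$, and Lemma \ref{lem:supest} applied to $G$ and $G'$. For the $z$-part one needs an $L^1$ dominating function for $\E\!\left[|z(s)|\, G^2(t)/(G(t_0)G(s))\right]$ in $(t,s)$; this is obtained by H\"older's inequality (choosing conjugate exponents below $p$ and above $p/(p-1)$ using $z \in \cL^2_p(\Omega,\bP;[0,T])$ with $p>2$) combined with Lemma \ref{lem:supest} to bound the supremum of the Geometric-Brownian factors in $L^q$. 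This is the only delicate part; the remaining algebra is mechanical.
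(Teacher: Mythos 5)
Your proposal is correct and follows essentially the same route as the paper's proof: reduce to the Euler--Lagrange equation via Proposition \ref{prop:hyp} together with Theorems \ref{thm:EL} and \ref{thm:suffcond}, insert the explicit solution map, justify Fubini, evaluate the lognormal expectation $\E\bigl[G^2(t)/(G(t_0)G(s))\bigr]=e^{t-\max\{t_0,s\}}$, and reorder the deterministic double integral to read off $k$ and $\cZ$. The only cosmetic difference is in the integrability argument for the $z$-term, where the paper uses Cauchy--Schwartz and the bound $\E\bigl[\bigl(\int_0^T|z(s)|ds\bigr)^2\bigr]<+\infty$ coming from $\cL_p^2(\Omega,\bP;[0,T])\subset\cL_1^2([0,T];\Omega,\bP)$ rather than your H\"older exponents tied to $p$; both work.
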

\begin{proof}
	We already know that $\overline{u}$ is the unique solution of the minimization problem \eqref{minprob} if and only if it solves
	\begin{equation}\label{eq:EL2}
		\int_{t_0}^{T}\E\left[\frac{\xi_{\overline{u}}(t)G(t)}{G(t_0)}e^{(A(t)-A(t_0))}\right]dt=0, \ \forall t_0 \in [0,T],
	\end{equation}
	that is Equation \eqref{eq:ELp}. By the explicit definition of solution map given in Equation \eqref{eq:solutionmap} we get
	\begin{equation*}
		\xi_{\overline{u}}(t)=\cS_0(z-u)(t)=G(t)e^{A(t)}\int_0^t \frac{e^{-A(s)}}{G(s)} (z(s)-\overline{u}(s))ds
	\end{equation*}
	and then Equation \eqref{eq:EL2} becomes
	\begin{equation}\label{eq:EL21}
		\int_{t_0}^{T}\E\left[\int_0^t \frac{G^2(t)}{G(t_0)G(s)}e^{2A(t)-A(t_0)-A(s)}(z(s)-\overline{u}(s))ds\right]dt=0, \ \forall t_0 \in [0,T].
	\end{equation}
	Now we want to show that we are under the hypotheses of Fubini's theorem, so to exchange the order of the inner integral and the expectation operator. Let us first rewrite
	\begin{align*}
		\int_0^t&\frac{G^2(t)}{G(t_0)G(s)}e^{2A(t)-A(t_0)-A(s)}|z(s)-\overline{u}(s)|ds=\int_0^t\frac{G^2(t)G'(s)}{G(t_0)}e^{2A(t)-A(t_0)-A(s)+s}|z(s)-u(s)|ds\\
		&\le \left(\sup_{\tau_1,\tau_2,\tau_3 \in [0,T]}e^{2A(\tau_1)-A(\tau_2)-A(\tau_3)+\tau_3}\right)\left(\sup_{s \in [0,T]}G'(s)\right)\frac{G^2(t)}{G(t_0)}\left(\int_0^T|z(s)|ds+\Norm{\overline{u}}{L^1(0,T)}\right).
	\end{align*}
	Next, we take the expectation on both sides of the previous inequality to achieve
	\begin{align*}
		&\E\left[\int_0^t\frac{G^2(t)}{G(t_0)G(s)}e^{2A(t)-A(t_0)-A(s)}|z(s)-u(s)|ds\right]\\
		&\qquad \qquad \le C\E\left[\left(\sup_{s \in [0,T]}G'(s)\right)\frac{G^2(t)}{G(t_0)}\left(\int_0^T|z(s)|ds+\Norm{\overline{u}}{L^1(0,T)}\right)\right].
	\end{align*}
	By the Cauchy-Schwartz inequality we get
	\begin{align}\label{eq:ineq21}
		\begin{split}
			&\E\left[\left(\sup_{s \in [0,T]}G'(s)\right)\frac{G^2(t)}{G(t_0)}\left(\int_0^T|z(s)|ds+\Norm{u}{L^1(0,T)}\right)\right]\\  &\qquad \qquad \le \E\left[\left(\sup_{s \in [0,T]}G'(s)\right)^2\frac{G^{4}(t)}{G^2(t_0)}\right]^{\frac{1}{2}}\E\left[\left(\int_0^T|z(s)|ds+\Norm{\overline{u}}{L^1(0,T)}\right)^2\right]^{\frac{1}{2}}.
		\end{split}
	\end{align}
	To argue with the first factor of Equation \eqref{eq:ineq21}, let us apply again the Cauchy-Schwartz inequality to obtain
	\begin{equation*}\label{eq:ineq22}
		\E\left[\left(\sup_{s \in [0,T]}G'(s)\right)^2\frac{G^{4}(t)}{G^2(t_0)}\right]\le \E\left[\left(\sup_{s \in [0,T]}G'(s)\right)^{4}\right]^{\frac{1}{2}}\E\left[\frac{G^{8}(t)}{G^{4}(t_0)}\right]^{\frac{1}{2}} \le C\E\left[\frac{G^{8}(t)}{G^{4}(t_0)}\right]^{\frac{1}{2}},
	\end{equation*}
	where we also used Lemma \ref{lem:supest}. Noticing that
	\begin{equation*}
		\frac{G^{8}(t)}{G^{4}(t_0)}=e^{8W(t)-4W(t_0)-4t+2t_0},
	\end{equation*}
	we recall that $\frac{G^{8}(t)}{G^{4}(t_0)}$ is a lognormal random variable and then $\E\left[\frac{G^{8}(t)}{G^{4}(t_0)}\right]$ is finite. Hence, the first factor of Equation \eqref{eq:ineq21} is finite.\\ 
	Concerning the second factor, it clearly holds
	\begin{equation*}
		\E\left[\left(\int_0^T|z(s)|ds+\Norm{\overline{u}}{L^1(0,T)}\right)^2\right]\le 2\left(\E\left[\left(\int_0^T|z(s)|ds\right)^2\right]+\Norm{\overline{u}}{L^1(0,T)}^2\right)<+\infty,
	\end{equation*}
	since $\cL_p^2(\Omega,\bP;[0,T]) \subset \cL_1^2([0,T];\Omega,\bP)$. Hence, we can use Fubini's theorem to rewrite Equation \eqref{eq:EL21} as
	\begin{equation*}
		\int_{t_0}^{T}\int_0^t e^{2A(t)-A(t_0)-A(s)}\E\left[\frac{G^2(t)}{G(t_0)G(s)}(z(s)-\overline{u}(s))\right]dsdt=0, \ \forall t_0 \in [0,T],
	\end{equation*}
that is equivalent to
	\begin{equation}\label{eq:EL22}
		\int_{t_0}^{T}\int_0^t e^{2A(t)-A(t_0)-A(s)}\E\left[\frac{G^2(t)}{G(t_0)G(s)}\right]\overline{u}(s)dsdt=\int_{t_0}^{T}\int_0^t e^{2A(t)-A(t_0)-A(s)}\E\left[\frac{G^2(t)}{G(t_0)G(s)}z(s)\right]dsdt, \ \forall t_0 \in [0,T].
	\end{equation}
Arguing as before, notice that $\frac{G^2(t)}{G(t_0)G(s)}$ is a lognormal random variable with
\begin{align*}
	\E\left[\log\left(\frac{G^2(t)}{G(t_0)G(s)}\right)\right]&=\E\left[2W(t)-W(t_0)-W(s)-t+\frac{t_0+s}{2}\right]=\frac{t_0+s}{2}-t\\
	{\rm Var}\left[\log\left(\frac{G^2(t)}{G(t_0)G(s)}\right)\right]&=\E\left[(2W(t)-W(t_0)-W(s))^2\right]=4t-3t_0-3s+2\min\{t_0,s\}
\end{align*} 
where for any random variable $X \in L^2(\Omega,\bP)$ we set ${\rm Var}(X)=\E[(X-\E[X])^2]$. Hence we get
\begin{equation*}\label{eq:evalkern}
		\E\left[\frac{G^2(t)}{G(t_0)G(s)}\right]=e^{t-t_0-s+\min\{t_0,s\}}=e^{t-\max\{t_0,s\}}
\end{equation*}
	and then
\begin{equation*}
	\int_{t_0}^{T}\int_0^t e^{2A(t)-A(t_0)-A(s)}\E\left[\frac{G^2(t)}{G(t_0)G(s)}\right]\overline{u}(s)dsdt=\int_{t_0}^{T}\int_0^t e^{2A(t)-A(t_0)-A(s)+t-\max\{t_0,s\}}\overline{u}(s)dsdt.
\end{equation*}
Being $\overline{u} \in L^1(0,T)$, it is clear that we can use Fubini's theorem to achieve
\begin{equation*}
	\int_{t_0}^{T}\int_0^t e^{2A(t)-A(t_0)-A(s)+t-\max\{t_0,s\}}\overline{u}(s)dsdt=\int_{0}^{T}\left(\int_{\max\{t_0,s\}}^{T} e^{2A(t)-A(t_0)-A(s)+t-\max\{t_0,s\}}dt\right)\overline{u}(s)ds.
\end{equation*}
Setting $k(t,s;a)$ and $\cZ(t)$ as in Equations \eqref{eq:kdef} and \eqref{eq:cZdef} we can rewrite Equation \eqref{eq:EL22} as \eqref{eq:EL23}, concluding the proof.
\end{proof}
An analogous result can be shown for Equation \eqref{eq:ELppen}.
\begin{prop}\label{prop:FE2}
	Let $z \in \cL_p^2(\Omega,\bP;[0,T])$ for some $p>2$. Then $\overline{u}_{\delta} \in L^1(0,T)$ is the unique solution of the minimization problem
	\begin{equation}\label{eq:minprobpen}
		J^{(2,2)}_{\delta}[\overline{u}_\delta]=\min_{u \in L^1(0,T)}J^{(2,2)}_\delta[u]
	\end{equation}
	if and only if
	\begin{equation}\label{eq:ELpen11}
		\delta\overline{u}_\delta(t_0)+\int_0^T k(t_0,s;a)\overline{u}_\delta(s)ds=\cZ(t_0), \ \forall t_0 \in [0,T],
	\end{equation}
	where $k$ and $\cZ$ are defined in Equations \eqref{eq:kdef} and \eqref{eq:cZdef}.
\end{prop}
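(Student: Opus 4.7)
The plan is to apply Theorem \ref{thm:ELpen} with the choice $\Psi(t)=t^2/2$ and then massage the resulting Euler--Lagrange equation into the stated Fredholm form via the very same Fubini-type manipulation already carried out in the proof of Proposition \ref{prop:FE1}. First I would check that the hypotheses of Theorem \ref{thm:ELpen} are met: by Proposition \ref{prop:hyp}, hypotheses (H1)--(H5) hold for $J^{(2)}$ whenever $z \in \cL_p^2(\Omega,\bP;[0,T])$ for some $p>2$; moreover $\Psi(t)=t^2/2$ is a Young function with $\psi(t)=t$ strictly increasing and continuous, and $\Psi \in \Delta_2$. Hence $\overline{u}_\delta \in L^1(0,T)$ is the unique minimizer of $J_\delta^{(2,2)}$ if and only if it satisfies \eqref{eq:ELpen}, which with $F^{(2)}(\xi)=\xi^2/2$ and $\psi(t)=t$ specializes to
\begin{equation*}
\delta \overline{u}_\delta(t_0) = \int_{t_0}^T \E\left[\xi_{\overline{u}_\delta}(t)\, e^{A(t)-A(t_0)}\frac{G(t)}{G(t_0)}\right]dt, \qquad \forall t_0 \in [0,T],
\end{equation*}
where the factor $\overline{u}_\delta(t_0)/|\overline{u}_\delta(t_0)|$ collapses against $\psi(|\overline{u}_\delta(t_0)|)=|\overline{u}_\delta(t_0)|$ (using the convention in Remark \ref{remark_u0}).

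Next I would expand $\xi_{\overline{u}_\delta}(t)=\cS_0(z-\overline{u}_\delta)(t)$ via \eqref{eq:solutionmap} and apply Fubini's theorem to exchange the inner $ds$-integral with the expectation, exactly as done in the proof of Proposition \ref{prop:FE1}. The integrability estimates needed to justify Fubini are identical (they rely only on the bound $z \in \cL_p^2(\Omega,\bP;[0,T]) \subset \cL_1^2([0,T];\Omega,\bP)$, on $\overline{u}_\delta \in L^1(0,T)$, and on finiteness of moments of $G,G'$ up to a controlled order), so I can cite that computation without repeating it. This transforms the right-hand side into
\begin{equation*}
\int_{t_0}^T \int_0^t e^{2A(t)-A(t_0)-A(s)}\E\left[\frac{G^2(t)}{G(t_0)G(s)}(z(s)-\overline{u}_\delta(s))\right]ds\, dt,
\end{equation*}
and the log-normal computation $\E[G^2(t)/(G(t_0)G(s))]=e^{t-\max\{t_0,s\}}$ together with a second application of Fubini on the $\overline{u}_\delta$ piece (justified by $\overline{u}_\delta\in L^1$) yields precisely $\cZ(t_0)-\int_0^T k(t_0,s;a)\,\overline{u}_\delta(s)\,ds$, with $k$ and $\cZ$ as in \eqref{eq:kdef} and \eqref{eq:cZdef}.

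Bringing the $\overline{u}_\delta$ term to the left-hand side then produces \eqref{eq:ELpen11}, and the equivalence with the minimization problem \eqref{eq:minprobpen} is exactly the iff provided by Theorem \ref{thm:ELpen}. Uniqueness of the solution of \eqref{eq:ELpen11} is likewise inherited from Theorem \ref{thm:ELpen}, since $\Psi(t)=t^2/2$ is strictly convex. I do not anticipate any serious obstacle: the entire argument is a direct transcription of the proof of Proposition \ref{prop:FE1}, the only new ingredient being the additive term $\delta\overline{u}_\delta(t_0)$ on the left-hand side coming from the penalization, which converts the first-kind Fredholm equation \eqref{eq:EL23} into a second-kind one \eqref{eq:ELpen11}.
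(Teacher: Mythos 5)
Your proposal is correct and follows exactly the route the paper intends: the paper omits this proof as ``identical to the previous one,'' and your argument is precisely that — invoke Theorem \ref{thm:ELpen} with $\Psi(t)=t^2/2$ (whose hypotheses you correctly verify via Proposition \ref{prop:hyp} and the fact that $\psi(t)=t$ is strictly increasing, continuous, and $\Psi\in\Delta_2$), then repeat the Fubini and log-normal computations from the proof of Proposition \ref{prop:FE1} to convert the right-hand side into $\cZ(t_0)-\int_0^T k(t_0,s;a)\overline{u}_\delta(s)\,ds$. The specialization of the left-hand side to $\delta\overline{u}_\delta(t_0)$ and the inheritance of the ``if and only if'' and uniqueness from Theorem \ref{thm:ELpen} are all handled as the paper does.
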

We omit the proof since it is identical to the previous one.\\
Both Propositions \ref{prop:FE1} and \ref{prop:FE2} give us an alternative form of the Euler-Lagrange equation whose usefulness is twofold:
we can use some well-known numerical methods to exploit the solution and it can be also used to determine the existence of the solution (and actually exhibit it) under an additional hypothesis. This is the content of the next subsection.
\subsection{The least mean squared approximation: the independence case}
Now let us prove that if $z$ is independent of $W$, then we can exhibit the solution of the approximation problem.
\begin{prop}\label{prop:indquad}
	Let $z \in \cL_p^2(\Omega,\bP;[0,T])$ for some $p > 2$ be independent of the Brownian motion $W(t)$. Then the minimization problem \eqref{minprob} admits as unique solution $\overline{u}(t)=\E[z(t)]$. Moreover, it holds
	\begin{equation} \label{eq:evalJ2}
		J^{(2)}[\overline{u}]=\frac{1}{2}\int_0^T \int_0^t \int_0^t e^{2A(t)-A(s)-A(\tau)+t-\max\{s,\tau\}}{\rm Cov}(z(s),z(\tau))dsd\tau dt,
	\end{equation}
	where ${\rm Cov}$ is the covariance operator, i.e., for two random variables $X,Y \in L^2(\Omega,\bP)$, ${\rm Cov}(X,Y)=\E[(X-\E[X])(Y-\E[Y])]$.
\end{prop}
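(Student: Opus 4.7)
The plan is to use Proposition \ref{prop:FE1}, which reduces the claim that $\overline{u}(t)=\E[z(t)]$ is a minimizer to checking that it satisfies the Fredholm equation \eqref{eq:EL23}. Once that is verified, the rest is a direct computation of $J^{(2)}[\overline{u}]$.

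First I would exploit independence to simplify the data $\cZ(t_0)$. Since $z(s)$ is independent of $G(\tau)/G(t)G(s)$ (which is a deterministic function of the Brownian motion $W$), it holds
\begin{equation*}
\E\!\left[\frac{G^2(\tau)}{G(t_0)G(s)}z(s)\right]=\E[z(s)]\,\E\!\left[\frac{G^2(\tau)}{G(t_0)G(s)}\right]=\E[z(s)]\,e^{\tau-\max\{t_0,s\}},
\end{equation*}
where the last equality is the same log-normal moment computation already carried out in the proof of Proposition \ref{prop:FE1}. Plugging this into \eqref{eq:cZdef} and applying Fubini to exchange the order of integration over the region $\{0\le s\le \tau, \ t_0\le\tau\le T\}=\{0\le s\le T, \ \max\{t_0,s\}\le \tau\le T\}$ yields
\begin{equation*}
\cZ(t_0)=\int_0^T \E[z(s)]\,e^{-A(t_0)-A(s)-\max\{t_0,s\}}\!\int_{\max\{t_0,s\}}^T e^{2A(\tau)+\tau}d\tau\,ds=\int_0^T k(t_0,s;a)\,\E[z(s)]\,ds,
\end{equation*}
which is exactly \eqref{eq:EL23} evaluated at $\overline{u}(s)=\E[z(s)]$. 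By Proposition \ref{prop:FE1}, this establishes that $\overline{u}=\E[z(\cdot)]$ is the unique minimizer (note $\overline{u}\in L^1(0,T)$ thanks to the integrability consequences of \textit{(H1)} proved in Section 2).

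For the explicit evaluation \eqref{eq:evalJ2}, I would expand $\xi_{\overline{u}}(t)=\cS_0(z-\overline{u})(t)=G(t)e^{A(t)}\int_0^t \frac{e^{-A(s)}}{G(s)}(z(s)-\E[z(s)])ds$, square it, and apply Fubini (justified via the integrability estimates in Lemma \ref{lem:momest2} together with Cauchy–Schwarz on the Brownian factors as in the proof of Proposition \ref{prop:FE1}):
\begin{equation*}
\E[\xi_{\overline{u}}^2(t)]=\int_0^t\!\int_0^t e^{2A(t)-A(s)-A(\tau)}\E\!\left[\tfrac{G^2(t)}{G(s)G(\tau)}\right]\E\big[(z(s)-\E[z(s)])(z(\tau)-\E[z(\tau)])\big]\,ds\,d\tau.
\end{equation*}
Using independence again and the identity $\E[G^2(t)/(G(s)G(\tau))]=e^{t-\max\{s,\tau\}}$, and recognizing the other factor as ${\rm Cov}(z(s),z(\tau))$, I obtain \eqref{eq:evalJ2} after integrating in $t$ and dividing by $2$.

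There is no genuinely hard step: the key algebraic fact is the decoupling of the log-normal moments from the drift process via independence, and the only care needed is the use of Fubini, which is routine once one observes that the dominated expression factors into a $W$-part (integrable by the log-normal bounds and Lemma \ref{lem:supest}) and a $z$-part (integrable by \textit{(H1)}). The mild subtlety worth stating explicitly is that the argument relies on the \emph{uniqueness} half of Proposition \ref{prop:FE1}, so that matching the Fredholm equation automatically identifies the unique minimizer without any additional coercivity discussion.
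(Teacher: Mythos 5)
Your proposal is correct and follows essentially the same route as the paper's own proof: both reduce the minimization claim to verifying the Fredholm equation \eqref{eq:EL23} via Proposition \ref{prop:FE1}, use independence together with the log-normal moment identity $\E\left[\frac{G^2(\tau)}{G(t_0)G(s)}\right]=e^{\tau-\max\{t_0,s\}}$ and Fubini's theorem to show $\cZ(t_0)=\int_0^T k(t_0,s;a)\E[z(s)]ds$, and then compute $J^{(2)}[\overline{u}]$ by expanding the squared solution map and factoring the expectation. No gaps; the Fubini justifications you sketch are exactly the ones carried out in the paper.
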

\begin{proof}
	By Proposition \ref{prop:FE1} we know that $\overline{u}$ is the unique solution of the minimization problem \eqref{minprob} if and only if it solves equation \eqref{eq:EL23}. Thus, we only have to show that $\overline{u}(t)=\E[z(t)]$ solves that equation. To do this, just observe that, being $z$ independent of $W$, by Equation \eqref{eq:cZdef} we get
	\begin{equation*}
		\cZ(t)=e^{-A(t)}\int_t^T \int_0^\tau e^{2A(\tau)-A(s)+\tau-\max\{t,s\}}\E[z(s)]dsd\tau.
	\end{equation*}
	Since $z \in \cL_p^2(\Omega,\bP; [0,T])$, we know that $\E[z(\cdot)] \in L^1(0,T)$ and then we can use Fubini's theorem in the previous equation to achieve
	\begin{equation*}
		\cZ(t)=\int_0^T \left(\int_{\max{t,s}}^T e^{-A(t)+2A(\tau)-A(s)+\tau-\max\{t,s\}} d\tau \right)\E[z(s)]ds=\int_0^T k(t,s;a)\E[z(s)]ds,
	\end{equation*}
	concluding the first part of the proof.\\
	Now let us evaluate $J^{(2)}[\overline{u}]$. We have, by the definition of $J^{(2)}$ and Equation \eqref{eq:solutionmap},
	\begin{align}\label{eq:beforeFubini}
		\begin{split}
		J^{(2)}[\overline{u}]&=\frac{1}{2}\int_0^T \E\left[G^2(t)e^{2A(t)}\left(\int_0^t \frac{e^{-A(s)}}{G(s)}(z(s)-\overline{u}(s))ds\right)^2\right]dt\\
		&=\frac{1}{2}\int_0^T \E\left[\int_0^t\int_0^t \frac{G^2(t)e^{2A(t)-A(s)-A(\tau)}}{G(s)G(\tau)}(z(s)-\overline{u}(s))(z(\tau)-\overline{u}(\tau))dsd\tau\right]dt.
		\end{split}
	\end{align}
	Now we want to show that we are under the hypotheses of Fubini's theorem so that we can exchange the inner double integral with the expectation operator. To do this, notice that
	\begin{align*}
		\int_0^t\int_0^t &\frac{G^2(t)e^{2A(t)-A(s)-A(\tau)}}{G(s)G(\tau)}|z(s)-\overline{u}(s)||z(\tau)-\overline{u}(\tau)|dsd\tau\\&\le G^2(t)\left(\sup_{t,t_0,s \in [0,T]}e^{2A(t)-A(s)-A(\tau)+s+\tau}\right)\left(\sup_{s \in [0,T]}G'(s)\right)\left(\sup_{s \in [0,T]}G'(\tau)\right)\\&\qquad \times \int_0^t\int_0^t|z(s)-\overline{u}(s)||z(\tau)-\overline{u}(\tau)|dsd\tau.
	\end{align*}
	Taking the expectation on both sides and using the fact that $z$ is independent of $W$ we have
	\begin{align*}
		\E\left[\int_0^t\int_0^t \right.&\left.\frac{G^2(t)e^{2A(t)-A(s)-A(\tau)}}{G(s)G(\tau)}|z(s)-\overline{u}(s)||z(\tau)-\overline{u}(\tau)|dsd\tau\right]\\&\le \E\left[G^2(t)\left(\sup_{t,t_0,s \in [0,T]}e^{2A(t)-A(s)-A(\tau)+s+\tau}\right)\left(\sup_{s \in [0,T]}G'(s)\right)\left(\sup_{s \in [0,T]}G'(\tau)\right)\right.\\&\left.\qquad \times \int_0^t\int_0^t|z(s)-\overline{u}(s)||z(\tau)-\overline{u}(\tau)|dsd\tau\right]\\
		&\le C\E\left[G^2(t)\left(\sup_{s \in [0,T]}G'(s)\right)\left(\sup_{s \in [0,T]}G'(\tau)\right)\right]\E\left[\int_0^t\int_0^t|z(s)-\overline{u}(s)||z(\tau)-\overline{u}(\tau)|dsd\tau\right].
	\end{align*}
	Arguing exactly as in the proof of Proposition \ref{prop:FE1}, we have
	\begin{equation*}
		\E\left[G^2(t)\left(\sup_{s \in [0,T]}G'(s)\right)\left(\sup_{s \in [0,T]}G'(\tau)\right)\right]\le C,
	\end{equation*}
while, on the other hand
	\begin{align*}
		\E\left[\int_0^t\int_0^t|z(s)-\overline{u}(s)||z(\tau)-\overline{u}(\tau)|dsd\tau\right]&=\int_0^t\int_0^t\E[|z(s)-\overline{u}(s)||z(\tau)-\overline{u}(\tau)|]dsd\tau \\&\le \int_0^t\int_0^t\E[|z(s)-\overline{u}(s)|^2]^{\frac{1}{2}}\E[|z(\tau)-\overline{u}(\tau)|^2]^{\frac{1}{2}}dsd\tau\\
		&= \int_0^t\E[|z(s)-\overline{u}(s)|^2]^{\frac{1}{2}}ds\int_0^t\E[|z(\tau)-\overline{u}(\tau)|^2]^{\frac{1}{2}}d\tau\\
		&\le \left(\int_0^T\E[|z(s)-\overline{u}(s)|^2]^{\frac{1}{2}}ds\right)^2\\
		&\le T\int_0^T\E[|z(s)-\overline{u}(s)|^2]ds,
	\end{align*}
	where we used the Cauchy-Schwartz inequality and Jensen's inequality. Now let us observe that 
	\begin{equation*}
		\E[|z(s)-\overline{u}(s)|^2]=\E[|z(s)|^2]-\E[z(s)]^2\le \E[|z(s)|^2],
	\end{equation*}
	and then, being $z \in \cL_p^2(\Omega,\bP; [0,T])$, we know that
	\begin{equation*}
		T\int_0^T\E[|z(s)-\overline{u}(s)|^2]ds\le C.
	\end{equation*} 
Hence we can use Fubini's theorem and the fact that $z$ is independent of $W$ in Equation \eqref{eq:beforeFubini} to conclude the proof.
\end{proof}
\begin{rmk}\label{rmk:exp}
	Last Proposition agrees, in some sense, with the classical idea that the expected value should minimize the mean squared error under the hypothesis that $z$ is independent of $W$. However, we cannot remove this hypothesis, as we will see in the next subsection. 
\end{rmk}
Let us use the previous Proposition to provide an example in which we already know that the minimizer exists (and we know its exact form) and we can numerically solve the penalized equations. Precisely, let us set $z(t)$ as a geometric Brownian motion independent of $W$, so that $\overline{u}(t)=\E[z(t)]\equiv 1$. Let $a(t)\equiv -1$, and then $A(t)=-t$, and $T=1$. In this case we have
\begin{equation}\label{kernel}
	k(t,s;-1)=e^{-|t-s|}-e^{-1+\min\{t,s\}}
\end{equation}
and
\begin{equation}\label{zeta}
	\cZ(t)=e^{-1-t}(-e+e^t-3e^{2t}+2e^{1+t}+e^{2t}t).
\end{equation}
For any $\delta>0$, let $\overline{u}_\delta$ be the solution of Equation \eqref{eq:ELpen11}. To obtain a numerical evaluation of $\overline{u}_\delta$ for some fixed $\delta>0$ we used Nystr\"om method (see \cite{atkinson1997numerical}). Since we expect an highly oscillatory behaviour for small values of $\delta$, we need to determine a big number of nodes. To do this, we adopted a composite Newton-Cotes quadrature formula based on a $7$-th order interpolating polynomial on equispaced nodes. Precisely, we divided the interval $[0,1]$ in $N$ subintervals and, on each interval, we applied the closed Newton-Cotes formula (on equispaced nodes) with weights:
\begin{equation*}
	w=\frac{1}{840N}(41, 236, 27, 272, 27, 236, 41).
\end{equation*}
To be sure to avoid Runge's phenomenon, one could also reduce the order of the interpolation while increasing $N$. Such solutions are visualized in Figure \ref{fig:figure1}.
\begin{figure}[htb]
	\centering
	\includegraphics[width=0.7\linewidth]{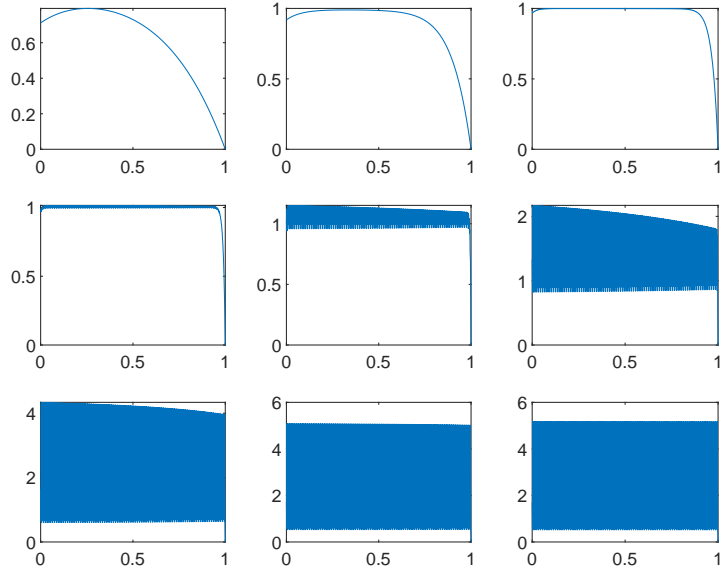}
	\caption{Numerical solutions of Equation \eqref{eq:ELpen11} with $T=1$, $\mathcal{Z}(t)$ given in Equation \eqref{zeta} and $k(t,s;a)$ given in Equation \eqref{kernel}, for different values of $\delta$. Precisely, reading left-to-right top-to-bottom we have $\overline{u}_\delta$ for $\delta=10^{-n}$ with $n=1,\dots,9$. $N$ is fixed to $100$, so that we have $601$ nodes for each $\overline{u}_\delta$.} 
	\label{fig:figure1}
\end{figure}
Evidently, $\overline{u}_\delta$ does not converge to $1$ as $\delta \to 0$. Let us now denote $\overline{u}_n=\overline{u}_{10^{-n}}$. To show  that $J^{(2)}[\overline{u}_n] \to J^{(2)}[\overline{u}]$, let us first evaluate $J^{(2)}[\overline{u}]$. This can be done by observing that
\begin{equation*}
	{\rm Cov}(z(s),z(\tau))=e^{\min\{\tau,s\}}-1
\end{equation*}
and then, by Equation \eqref{eq:evalJ2},
\begin{equation*}
	J^{(2)}[\overline{u}]=\frac{1}{2}\int_0^1\int_0^t\int_0^t e^{-t+\min\{\tau,s\}}(e^{\min\{\tau,s\}}-1)dsd\tau dt=\frac{e^2-7}{4e}\approx 0.0357814.
\end{equation*}
On the other hand, to evaluate $J^{(2)}[\overline{u}_n]$, we adopted a numerical method based on a Monte-Carlo approach. Precisely we simulated a skeleton of $6N+1$ nodes $(\xi_i)_{0 \le i \le 6N}$ for the process $\xi:=\xi_{\overline{u}_n}$. To do this, first we simulated a skeleton $(z_i)_{0 \le i \le 6N}$ of $6N+1$ nodes for $z$ as
\begin{equation*}
	\begin{cases}
		z_0=1,\\
		z_i=z_{i-1}e^{\frac{\widetilde{\zeta}_i}{\sqrt{6N}}-\frac{1}{12N}}, & i=1,\dots,6N,
	\end{cases}
\end{equation*}
where $\widetilde{\zeta}_i \sim \mathcal{N}(0,1)$ with $\widetilde{\zeta}_i$ independent of $\widetilde{\zeta}_j$ for each $i \not = j$. Once this is done, $(\xi_i)_{0 \le i \le 6N}$ can be obtained by using an Euler scheme (see \cite{asmussen2007stochastic}):
\begin{equation*}
	\begin{cases}
		\xi_0=0\\
		\displaystyle \xi_i=\xi_{i-1}+\frac{-\xi_{i-1}+z_{i-1}-\overline{u}_{n}(t_{i-1})}{6N}+\frac{\zeta_i\xi_{i-1}}{\sqrt{6N}}, & i=1,\dots,6N,
	\end{cases}
\end{equation*}
where $t_{i}=\frac{i}{6N}$, $\overline{u}_{n}(t_{i-1})$ has been obtained previously via Nystr\"om method and $\zeta_i \sim \cN(0,1)$ with $\zeta_i$ independent of $\zeta_j$ for $i \not = j$. The value $\frac{1}{2}\int_0^T \xi^2(t)dt$ is then approximated by a quadrature formula and $J^{(2)}[\overline{u}_n]$ by repeating the procedure for a fixed number $N_{\rm traj}$ of trajectories and then taking the average. While, on one hand, the convergence $J^{(2)}[\overline{u}_n] \to J^{(2)}[\overline{u}]$ is justified by Theorem \ref{thm:minseq}, on the other hand the stochastic differential equation could be \textit{stiff} due to the highly oscillatory behaviour of $\overline{u}_n$ and the Euler scheme could fail to catch $\xi$. The estimated values of $J^{(2)}[\overline{u}_n]$ for $n=1,\dots,5$ are given in Table \ref{tbl1}. 
\begin{table}[htb]
	\begin{tabular}{@{}lllll
			>{\columncolor[HTML]{C0C0C0}}l @{}}
		\toprule
		$J^{(2)}[\overline{u}_1]$ & $J^{(2)}[\overline{u}_2]$ & $J^{(2)}[\overline{u}_3]$ & $J^{(2)}[\overline{u}_4]$ & $J^{(2)}[\overline{u}_5]$ & $J^{(2)}[\overline{u}]$ \\ \midrule
		$0.0462$                  & $0.0363$                  & $0.0361$                  & $0.0362$                  & $0.0358$                  & $0.0358$ \\            
		\bottomrule   
	\end{tabular}
	\caption{Numerically estimated values of $J^{(2)}[\overline{u}_n]$ for $n=1,\dots,5$, in comparison with $J^{(2)}[\overline{u}]$. $N$ is fixed to $100$, while $N_{\rm traj}=100000$.}
	\label{tbl1}
\end{table}\\
To show a numerical evidence that $\overline{u}_n \rightharpoonup \overline{u}$, we also numerically evaluated $\int_0^1 t^j\overline{u}_n(t)dt$ for different values of $j$ and $n$ and we compared it with $\int_0^1 t^jdt=1/j$ in Table \ref{tbl2}. 
\begin{table}[htb]
	\begin{tabular}{@{}llllllllll
			>{\columncolor[HTML]{C0C0C0}}l @{}}
		\toprule
		& $n=1$    & $n=2$    & $n=3$    & $n=4$    & $n=5$    & $n=6$    & $n=7$    & $n=8$    & $n=9$    & $\int_0^1 x^jdx$ \\ \midrule
		$j=0$ & $0.6141$ & $0.8878$ & $0.9670$ & $0.9899$ & $0.9968$ & $0.9988$ & $0.9991$ & $0.9992$ & $0.9992$ & $1$              \\
		$j=1$ & $0.2535$ & $0.4067$ & $0.4690$ & $0.4901$ & $0.4968$ & $0.4988$ & $0.4991$ & $0.4992$ & $0.4992$ & $1/2$            \\
		$j=2$ & $0.1431$ & $0.2495$ & $0.3034$ & $0.3235$ & $0.3301$ & $0.3321$ & $0.3325$ & $0.3325$ & $0.3325$ & $1/3$            \\
		$j=3$ & $0.0930$ & $0.1734$ & $0.2211$ & $0.2403$ & $0.2468$ & $0.2488$ & $0.2491$ & $0.2492$ & $0.2492$ & $1/4$            \\ \bottomrule		
	\end{tabular}
	\caption{Numerically estimated values of $\int_0^1 t^j\overline{u}_n(t)dt$ for $n=1,\dots,9$ and $j=0,1,2,3$, in comparison with $\int_0^1 t^jdt=1/j$. $N$ is fixed to $100$ and the values of the integrals are obtained by using the same quadrature formula as applied before to determine $\overline{u}_n$.}
	\label{tbl2}
\end{table}\\
With this example, we want to highlight the fact that even if the solution of the minimizing problem \eqref{minprob} is known and quite regular, the solution of the penalized problem converge towards them only weakly. However, this is a problem only in the case one wants to approximate the actual minimizer $\overline{u}$. Indeed, usually one is interested in properties of the approximating process $\cS_{X_0}\overline{u}$, that, despite the weak convergence of $\overline{u}_\delta$ towards $\overline{u}$, is in the overall approximated well enough by $\cS_{X_0}\overline{u}_\delta$, as shown in Theorem \ref{thm:strongconv}.\\
As already stated in Remark \ref{rmk:exp}, Proposition \ref{prop:indquad} seems to suggest that the expected value should be, in some sense, the minimizer of the mean squared error. However, as we will see in the following example, this is not necessarily true if we suppose that $z$ and $W$ are dependent.
\subsection{The least mean squared error approximation: a dependence case}
Now let us consider a different example. Let $z=G$, $a\equiv 1$, so that $A(t)=t$, and $T=1$. First of all, let us observe that, since $z(t)=f(t,W(t))$ for some function $f$, we cannot use Proposition \ref{prop:indquad}. Thus, let us first determine (at least numerically) the solutions $\overline{u}_\delta$ of the penalized problem \eqref{eq:minprobpen}. According to Equation \eqref{eq:kdef} we have
\begin{equation}\label{eq:kern2}
	k(t,s;1)=\frac{1}{3}\left(e^{3-\min\{t,s\}-2\max\{t,s\}}-e^{|t-s|}\right).
\end{equation}
Concerning $\cZ(t)$, starting from Equation \eqref{eq:cZdef}, it holds
\begin{equation}\label{eq:cZ2}
	\cZ(t)=\int_t^1 \int_0^\tau e^{2\tau-t-s}\E\left[\frac{G^2(\tau)}{G(t)}\right]dsd\tau.
\end{equation}
In particular, $\frac{G^2(\tau)}{G(t)}=e^{2W(\tau)-W(t)-\tau+\frac{t}{2}}$ is a lognormal random variable with
\begin{align*}
	\E\left[\log\left(\frac{G^2(\tau)}{G(t)}\right)\right]&=\E\left[2W(\tau)-W(t)-\tau+\frac{t}{2}\right]=\frac{t}{2}-\tau\\
	{\rm Var}\left[\log\left(\frac{G^2(\tau)}{G(t)}\right)\right]&=\E[(2W(\tau)-W(t))^2]=4\tau-3t
\end{align*}
hence
\begin{equation*}
	\E\left[\frac{G^2(\tau)}{G(t)}\right]=e^{\tau-t}.
\end{equation*}
Thus Equation \eqref{eq:cZ2} becomes
\begin{equation}\label{eq:zeta2}
	\cZ(t)=\int_t^1 \int_0^\tau e^{3\tau-2t-s}dsd\tau=\frac{1}{6}(3-2e^t+e^{2-2t}(2e-3)).
\end{equation}
As before, let us exploit some numerical solutions $\overline{u}_{\delta}$ of Equation \eqref{eq:ELpen11} (with $k$ and $\cZ$ given in Equations \eqref{eq:kern2} and \eqref{eq:zeta2}) by using Nystr\"om's method, as shown in Figure \ref{fig:figure2}. From now on let us denote $\overline{u}_{n}:=\overline{u}_{10^{-n}}$.
\begin{figure}[htb]
	\centering
	\includegraphics[width=0.7\linewidth]{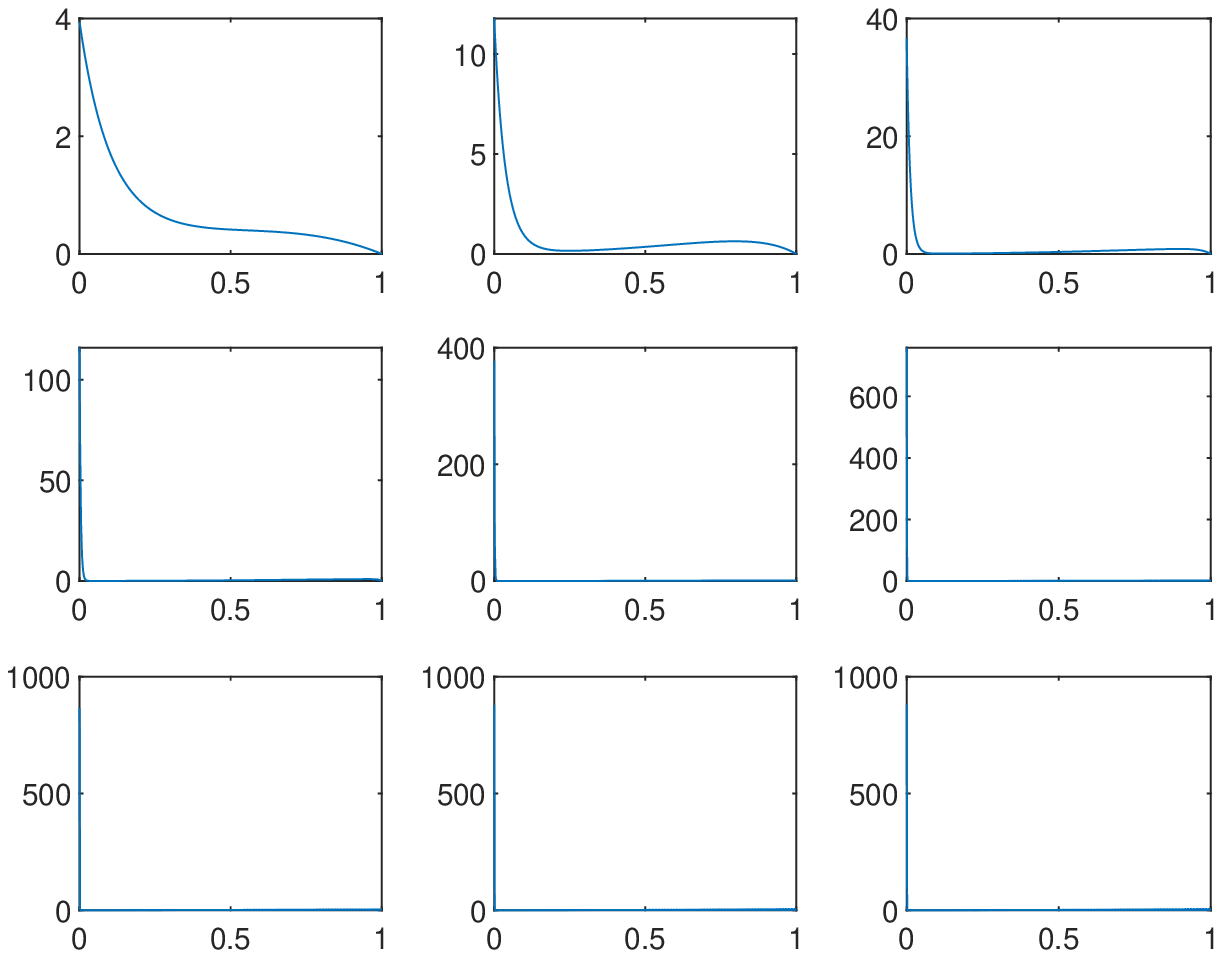}
	\caption{Numerical solutions of Equation \eqref{eq:ELpen11} with $T=1$, $\mathcal{Z}(t)$ given in Equation \eqref{eq:zeta2} and $k(t,s;a)$ given in Equation \eqref{eq:kern2}, for different values of $\delta$. Precisely, reading left-to-right top-to-bottom we have $\overline{u}_\delta$ for $\delta=10^{-n}$ with $n=1,\dots,9$. $N$ is fixed to $100$, so that we have $601$ nodes for each $\overline{u}_\delta$.} 
	\label{fig:figure2}
\end{figure}
We do not know if Equation \eqref{eq:EL23} admits a solution. To have a qualitative idea on whether a solution of Equation \eqref{eq:EL23} exists or not, we could evaluate $\Norm{\overline{u}_n}{L^p(0,T)}$ for some $p>1$, as done in Table \ref{tbl3}.
\begin{table}[htb]
	\begin{tabular}{@{}llllllllll@{}}
		\toprule
		& $n=1$    & $n=2$    & $n=3$    & $n=4$     & $n=5$     & $n=6$      & $n=7$      & $n=8$      & $n=9$      \\ \midrule
		$p=2$    & $1.0492$ & $2.7499$ & $8.1839$ & $24.5822$ & $86.3839$ & $282.1855$ & $367.4504$ & $378.6386$ & $379.7967$ \\
		$p=1.5$  & $0.7900$ & $1.2318$ & $2.0335$ & $3.3589$  & $5.8571$  & $10.7897$  & $12.8737$  & $13.1871$  & $13.2218$  \\
		$p=1.25$ & $0.7262$ & $0.9220$ & $1.1764$ & $1.4681$  & $1.8501$  & $2.4278$   & $2.6948$   & $2.7466$   & $2.7529$   \\
		$p=1.1$  & $0.7057$ & $0.8139$ & $0.9178$ & $1.0014$  & $1.0824$  & $1.1826$   & $1.2347$   & $1.2472$   & $1.2489$   \\
		$p=1.01$ & $0.6996$ & $0.7701$ & $0.8183$ & $0.8402$  & $0.8508$  & $0.8584$   & $0.8622$   & $0.8632$   & $0.8633$   \\ \bottomrule
	\end{tabular}
	\caption{Numerically estimated values of $\Norm{\overline{u}_n(t)}{L^p(0,1)}^p$ for $n=1,\dots,9$ and $p=2, 1.5, 1.25, 1.1, 1.01$. $N$ is fixed to $100$ and the values of the integrals are obtained by using the same quadrature formula as before to determine $\overline{u}_n$.}
	\label{tbl3}
\end{table}
From Table \ref{tbl3}, we expect that $\Norm{\overline{u}_n}{L^p(0,1)}\le C$ for some suitable choice of $p>1$ (a good choice could be $p=1.01$, but, for a big value of $C$, also $p=2$ seems to work). This numerical evidence lets us conjecture that $\overline{u}_n$ is uniformly bounded in $L^p(0,1)$ for some $p>1$ and then, by Theorem \ref{thm:converg}, that a solution $\overline{u}$ of the minimization problem \eqref{minprob} exists. Since we can suppose $\Norm{\overline{u}_n}{L^2(0,1)}\le C$, let us conjecture that $\overline{u} \in L^2(0,1)$.\\
With this idea in mind, let us evaluate numerically the solution of Equation \eqref{eq:EL23}. To do this, we cannot use Nystr\"om's method, as it is well known that for Fredholm integral equations of the first kind the matrix obtained with the quadrature formula is very ill-conditioned. Hence, we have to use a different method. Precisely, we use a Gal\"erkin-type method as follows (see \cite[Section $6.3$]{wing1991primer}). Let $P_n(t)$ be the $n$-th degree Legendre polynomial and define $Q_n(t)=P_n(2t-1)$. Thus $\{Q_n(t)\}_{n \in \N}$ constitute an orthogonal system in $L^2(0,1)$ equipped with the usual scalar product $\langle \cdot, \cdot \rangle$, i.e.
\begin{equation*}
	\langle f,g\rangle=\int_0^1 f(t)g(t)dt, \ \forall f,g \in L^2(0,1).
\end{equation*}
Thus, $\overline{u}=\sum_{i=0}^{+\infty}\langle \overline{u}, Q_i\rangle Q_i$ and $\cZ=\sum_{i=0}^{+\infty}\langle \cZ, Q_i\rangle Q_i$. Moreover, for fixed $s \in [0,1]$, it holds $k(\cdot,s;1)=\sum_{i=0}^{+\infty}\langle k(\cdot,s;1), Q_i \rangle Q_i$. Rewriting Equation \eqref{eq:EL23} by using the series decomposition of $\overline{u}$ we have
\begin{equation*}
	\sum_{i=0}^{+\infty} \langle \overline{u}, Q_i\rangle \int_0^1 k(t,s;1)Q_i(s)ds=\cZ(t).
\end{equation*}
Then, using the decomposition of $\cZ$ and $k(\cdot,s;1)$, we get
\begin{equation*}
	\left(\sum_{i,j=0}^{+\infty} \langle \overline{u}, Q_i\rangle \int_0^1 \langle k(\cdot ,s;1), Q_j\rangle Q_i(s)ds\right)Q_j(t)=\sum_{j=0}^{+\infty}\langle \cZ, Q_j\rangle Q_j(t),
\end{equation*}
that is to say
\begin{equation*}
	\left(\sum_{i,j=0}^{+\infty} \langle \overline{u}, Q_i\rangle \int_0^1 \int_0^1 k(\tau ,s;1) Q_j(\tau) Q_i(s)d \tau ds\right)Q_j(t)=\sum_{j=1}^{+\infty}\langle \cZ, Q_j\rangle Q_j(t).
\end{equation*}
Let us fix $m \in \N$ and consider $\cZ_m=\sum_{i=0}^{m}\langle \cZ, Q_i\rangle Q_i$, that is a finite-dimensional approximation of $\cZ$. Let also $\overline{u}^{(m)}=\sum_{i=0}^{m}\langle \overline{u}, Q_i\rangle Q_i$ be a finite-dimensional approximation of $\overline{u}$ satisfying equation \eqref{eq:EL23} with $\cZ_m$ in place of $\cZ$. Then, if we reduce the problem to finding the finite-dimensional approximation $\overline{u}^{(m)}$, it is equivalent to the problem of solving the following linear system
\begin{equation*}
	\mathbf{K}\mathbf{u}=\mathbf{z},
\end{equation*}
where $\mathbf{K}_{i,j}=\int_0^1 \int_0^1 k(\tau ,s;1) Q_j(\tau) Q_i(s)d \tau ds$, $\mathbf{u}_i=\langle \overline{u}, Q_i\rangle$ and $\mathbf{z}_i=\langle \cZ, Q_i\rangle$, for $i,j=0,\dots,m$. The approximation $\overline{u}^{(m)}$ is shown in Figure \ref{fig:figure3}. From now on we will refer to $\overline{u}^{(m)}$ directly as $\overline{u}$.
\begin{figure}[htb]
	\centering
	\includegraphics[width=0.7\linewidth]{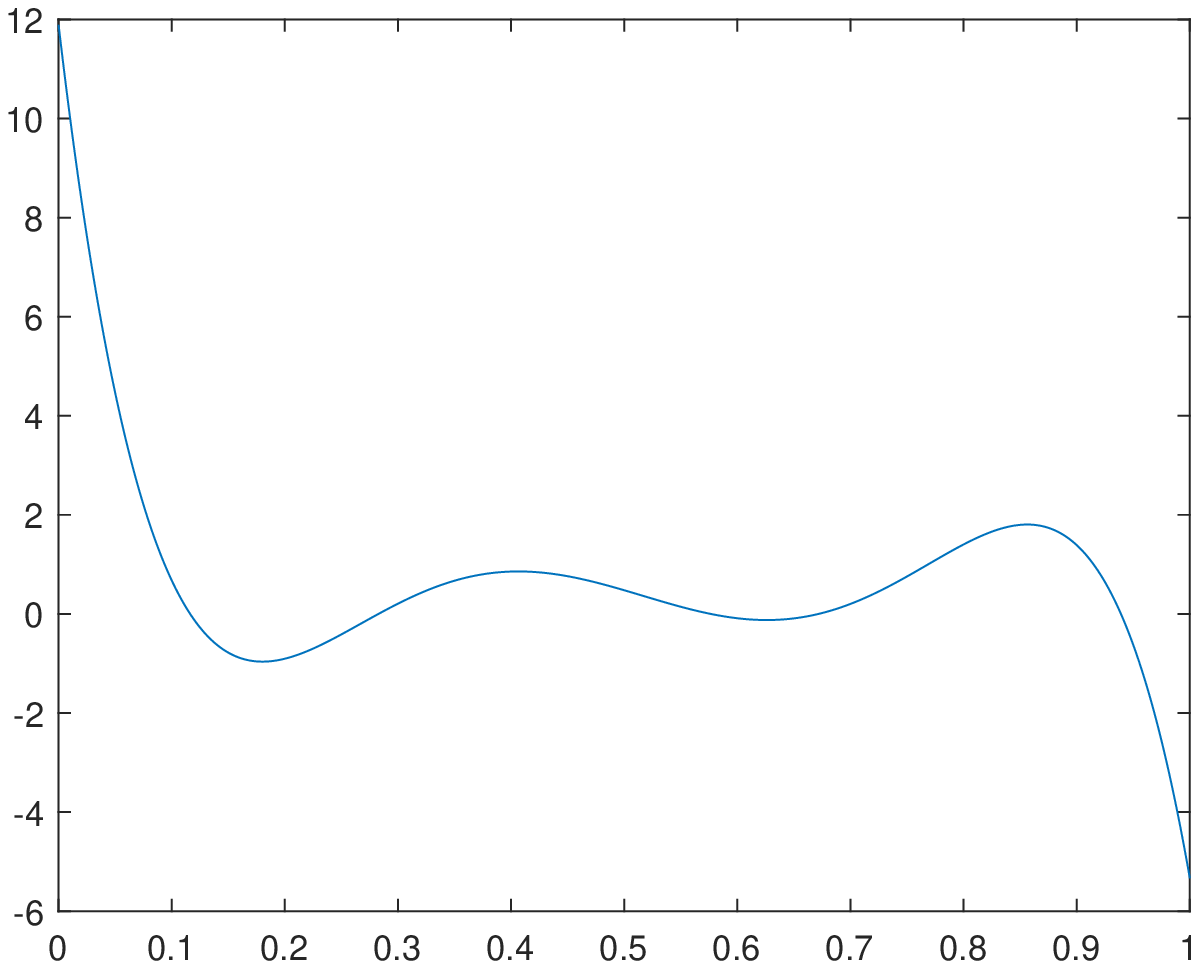}
	\caption{The approximation $\overline{u}^{(m)}$, with $m=5$, of the solution of Equation \eqref{eq:EL23} with $k(t,s;a)$ given in Equation \eqref{eq:kern2} and $\cZ$ given in Equation \eqref{eq:zeta2}.}
	\label{fig:figure3}
\end{figure}
Just looking at the figure, it seems that $\overline{u} \not \equiv 1 \equiv\E[z(\cdot)]$. Indeed, in this case, $\E[z(\cdot)]$ is not a solution of the minimization problem \eqref{minprob}, since
\begin{equation*}
	\int_0^1 k(t,s;1)ds=-\frac{1}{6}e^{-3t}(e^3 - 4 e^{3 t} + 2 e^{4 t} - 2 e^{3 + t} + 
	3 e^{1 + 2 t})\not \equiv \cZ(t),
\end{equation*}
so that $\E[z(\cdot)]$ does not solve Equation \eqref{eq:EL23}. Actually, the expected value seems to be quite far from the optimal approximation. This can be observed by evaluating $J^{(2)}[\overline{u}_n]$, $J^{(2)}[\overline{u}]$ and $J^{(2)}[\E[z(\cdot)]]$. As before, to evaluate $J^{(2)}[\overline{u}_n]$ we use the Monte-Carlo approach presented in the previous subsection, with the following set of nodes:
\begin{equation*}
	\begin{cases}
		z_0=1,\\
		z_i=z_{i-1}e^{\frac{\zeta_i}{\sqrt{6N}}-\frac{1}{12N}}, & i=1,\dots,6N,
	\end{cases}
\end{equation*}
and
\begin{equation*}
	\begin{cases}
		\xi_0=0\\
		\displaystyle \xi_i=\xi_{i-1}+\frac{\xi_{i-1}+z_{i-1}-\overline{u}_{n}(t_{i-1})}{6N}+\frac{\zeta_i\xi_{i-1}}{\sqrt{6N}}, & i=1,\dots,6N,
	\end{cases}
\end{equation*}
where $t_{i}=\frac{i}{6N}$, $\overline{u}_{n}(t_{i-1})$ has been obtained previously via Nystr\"om's method and $\zeta_i \sim \cN(0,1)$ with $\zeta_i$ independent of $\zeta_j$ for $i \not = j$. To evaluate $J^{(2)}[\overline{u}]$ and $J^{(2)}[\E[z(\cdot)]]$ we can use the same exact scheme substituting respectively $\overline{u}$ (obtained by Gal\"erkin's method) and $1$ in place of $\overline{u}_n$. Again, the evaluations of $J^{(2)}[\overline{u}_n]$ for big values of $n$ are not reliable due to the stiffness of the underlying problem. The results are exposed in Table \ref{tbl4}: here it is evident that the expected value is not the optimal approximation.
\begin{table}[htb]
	\begin{tabular}{@{}lll
			>{\columncolor[HTML]{C0C0C0}}l l @{}}
		\toprule
		$J^{(2)}[\overline{u}_1]$ & $J^{(2)}[\overline{u}_2]$ & $J^{(2)}[\overline{u}_3]$ & $J^{(2)}[\overline{u}]$ & $J^{(2)}[\E[z(\cdot)]]$\\ \midrule
		0.0836                    & 0.0572                    & 0.0510                    & 0.0577                  & 0.1505\\ \bottomrule
	\end{tabular}
	\caption{Numerically estimated values of $J^{(2)}[\overline{u}_n]$ for $n=1,\dots,3$, in comparison with $J^{(2)}[\overline{u}]$ and $J^{(2)}[\E[z(\cdot)]]$. $N$ is fixed to $100$, while $N_{\rm traj}=100000$. Consider that, since we are using a Monte-Carlo method, all the values in the table are subject to fluctuations, hence results that are \textit{near} to the best error $J^{(2)}[\overline{u}]$ are still admissible, despite being inferior to it. In some sense, this phenomenon, that is expected due to the stochastic approach used, also evidence the speed of convergence of $J^{(2)}[\overline{u}_n]$ to the best error.}
	\label{tbl4}
\end{table}
\begin{rmk}
In the case of the additive noise, in \cite{ascione2020optimal} it has been shown that the expected value is always the optimal approximation with respect to the quadratic cost (even if $z$ and $W$ are not independent). Clearly, the presence of the multiplicative noise has a crucial effect in this sense.
\end{rmk}
Again, to have another numerical evidence of the fact that $\overline{u}_n \rightharpoonup \overline{u}$, we compare $\int_0^1 t^j \overline{u}_n(t)dt$ with $\int_0^1 t^j \overline{u}(t)dt$ for $j=0,1,2,3$ in Table \ref{tbl5}.
\begin{table}[htb]
	\begin{tabular}{@{}llllllllll
			>{\columncolor[HTML]{C0C0C0}}l @{}}
		\toprule
		& $n=1$    & $n=2$    & $n=3$    & $n=4$    & $n=5$    & $n=6$    & $n=7$    & $n=8$    & $n=9$    & $\int_0^1 t^j\overline{u}(t)dt$ \\ \midrule
		$j=0$ & $0.6992$ & $0.7661$ & $0.8093$ & $0.8262$ & $0.8318$ & $0.8333$ & $0.8336$ & $0.8336$ & $0.8336$ & $0.8336$                        \\
		$j=1$ & $0.1838$ & $0.2256$ & $0.2657$ & $0.2822$ & $0.2877$ & $0.2893$ & $0.2896$ & $0.2896$ & $0.2896$ & $0.2896$                        \\
		$j=2$ & $0.0942$ & $0.1510$ & $0.1981$ & $0.2175$ & $0.2240$ & $0.2260$ & $0.2263$ & $0.2264$ & $0.2264$ & $0.2264$                        \\
		$j=3$ & $0.0606$ & $0.1139$ & $0.1580$ & $0.1768$ & $0.1833$ & $0.1852$ & $0.1856$ & $0.1856$ & $0.1856$ & $0.1856$                        \\ \bottomrule
	\end{tabular}
	\caption{Numerically estimated values of $\int_0^1 t^j\overline{u}_n(t)dt$ for $n=1,\dots,9$ and $j=0,1,2,3$, in comparison with $\int_0^1 t^j\overline{u}(t)dt$. $N$ is fixed to $100$ and the values of the integrals are obtained by using the same quadrature formula as applied before to determine $\overline{u}_n$.}
	\label{tbl5}
\end{table}
\begin{rmk}
	Let us emphasize that one must pay attention to the choice of the numerical method to solve Equation \eqref{eq:ELpen11}. Indeed, one cannot exclude \textit{a priori} an highly oscillatory behaviour of the solution of \eqref{eq:ELpen11}, as show by our first example. Thus, if a Gal\"erkin-type method is adopted, then the family of independent functions on $[0,T]$ should be chosen according to the expected behaviour of the solutions.
\end{rmk}
\appendix
\section{Lebesgue points}\label{AppendixLeb}
Let us recall the definition of Lebesgue point for a function $f \in L^1(0,T)$.
\begin{defn}
	We say that $t \in (0,T)$ is a Lebesgue point for $f$ if
	\begin{equation*}
		\lim_{\varepsilon \to 0^+}\frac{1}{\varepsilon}\int_{t-\frac{\varepsilon}{2}}^{t+\frac{\varepsilon}{2}}f(\tau)d\tau=f(t).
	\end{equation*}
We denote by $E_f$ the set of Lebesgue points of $f$.
\end{defn} 
By Lebesgue's differentiation theorem (see \cite[Section $1.7$, Theorem $1$]{evans2015measure}) it is well known that \linebreak $|[0,T]\setminus E_f|=0$. Let us recall, in particular, the following convergence result (see \cite[Section~$1.7$, Corollary~$2$]{evans2015measure}).
\begin{prop}
	Let $f \in L^p(0,T)$ for some $1 \le p <\infty$ and $t$ be a Lebesgue point for $f$. Let $\cI(t)$ be the family of all closed intervals in $[0,T]$ containing $t$. Then
	\begin{equation*}
		\lim_{\substack{{\rm diam}(I) \to 0 \\ I \in \cI(t)}}\frac{1}{|I|}\int_{I}|f(\tau)-f(t)|^pd\tau=0.
	\end{equation*}
\end{prop}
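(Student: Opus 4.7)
The plan is to reduce the statement to the case of symmetric intervals centred at $t$ and then handle that case by approximating $f(t)$ by rationals and invoking Lebesgue's differentiation theorem on $|f-c|^p$ for countably many constants $c$. As a first step I would note that every $I\in\cI(t)$ has $|I|=\mathrm{diam}(I)=:r$ and satisfies $I\subseteq[t-r,t+r]$, which gives the crude but sufficient bound
\begin{equation*}
\frac{1}{|I|}\int_I |f(\tau)-f(t)|^p\,d\tau \;\le\; \frac{2}{2r}\int_{t-r}^{t+r}|f(\tau)-f(t)|^p\,d\tau.
\end{equation*}
Since $\mathrm{diam}(I)\to 0$ forces $r\to 0^+$, it is enough to show that the symmetric average on the right tends to $0$.

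For each rational $c\in\mathbb{Q}$ the function $|f-c|^p$ lies in $L^1(0,T)$ because $f\in L^p$, so by Lebesgue's differentiation theorem (in the strong form of \cite[Section 1.7]{evans2015measure}) there is a full-measure set $N_c\subset(0,T)$ on which $\frac{1}{2r}\int_{t-r}^{t+r}|f(\tau)-c|^p\,d\tau$ converges to $|f(t)-c|^p$ as $r\to 0^+$. The intersection $\bigcap_{c\in\mathbb{Q}} N_c$ is still of full measure, and this is the set of $t$ that I interpret as the Lebesgue points of $f$ entering the proposition (the definition given in the appendix, which only controls symmetric averages of $f$ itself, has to be read in this stronger Evans--Gariepy sense, since this is what the rest of the paper tacitly invokes).

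To conclude, fix such a $t$ and $\varepsilon>0$, and pick $c\in\mathbb{Q}$ with $|f(t)-c|^p<\varepsilon$. Using the convexity bound $|a+b|^p\le 2^{p-1}(|a|^p+|b|^p)$ with $a=f(\tau)-c$ and $b=c-f(t)$,
\begin{equation*}
\frac{1}{2r}\int_{t-r}^{t+r}|f(\tau)-f(t)|^p\,d\tau \;\le\; 2^{p-1}\left(\frac{1}{2r}\int_{t-r}^{t+r}|f(\tau)-c|^p\,d\tau + |f(t)-c|^p\right).
\end{equation*}
Passing to the limit $r\to 0^+$, the right-hand side converges to $2^{p-1}(|f(t)-c|^p+|f(t)-c|^p)=2^p|f(t)-c|^p<2^p\varepsilon$, so the $\limsup$ of the left-hand side is at most $2^p\varepsilon$, and letting $\varepsilon\downarrow 0$ kills it.

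The only genuinely delicate point is the mismatch between the narrow symmetric-limit definition of Lebesgue point recorded in the appendix and the stronger vanishing condition on $|f-f(t)|^p$ that the argument actually uses; one could alternatively avoid the countable family of rationals and instead approximate $f$ by a continuous function in $L^p$ norm, but then one is forced to bring in the Hardy--Littlewood maximal function to control the approximation error uniformly in $r$, which is considerably heavier.
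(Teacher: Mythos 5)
The paper does not actually prove this proposition: it is recalled from \cite[Section~1.7, Corollary~2]{evans2015measure} with no argument given, so there is nothing in-paper to compare your proof against. What you write is essentially the standard proof of that cited corollary, and it is correct: the reduction to symmetric windows $[t-r,t+r]$ costs only a factor $2$ because the integrand is non-negative and any $I\in\cI(t)$ with $\mathrm{diam}(I)=r$ sits inside $[t-r,t+r]$; the differentiation theorem applied to the countable family $\{|f-c|^p\}_{c\in\mathbb{Q}}\subset L^1(0,T)$ yields a full-measure set; and the constant is removed via $|a+b|^p\le 2^{p-1}(|a|^p+|b|^p)$, giving $\limsup\le 2^p\varepsilon$ for every $\varepsilon>0$. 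The caveat you flag is genuine and is the one substantive point: with the weak, symmetric-average definition of Lebesgue point recorded in the appendix, the statement is false pointwise (take $f(\tau)=\sign(\tau-t)$ with $f(t)=0$: the symmetric averages of $f$ converge to $0=f(t)$ by cancellation, yet $\frac{1}{2r}\int_{t-r}^{t+r}|f(\tau)-f(t)|^p\,d\tau\equiv 1$), so the proposition can only be read as holding for almost every $t$, i.e.\ for Lebesgue points in the stronger Evans--Gariepy sense, which is exactly what your intersection $\bigcap_{c\in\mathbb{Q}}N_c$ delivers. Since the paper only ever uses the sets $E_f$ up to null sets (they are intersected with other full-measure sets and then extended by density and continuity), this discrepancy is harmless downstream, but your proof, not the paper's bare citation, is what makes that precise.
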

We can use last statement to prove the following result.
\begin{prop}\label{prop:lebp}
	Consider $1 \le p \le \infty$ and let $f \in L^p(0,T)$ and $g \in L^q(0,T)$ where $\frac{1}{p}+\frac{1}{q}=1$ and define $h=fg \in L^1(0,T)$. Consider versions of $f$ and $g$ that are everywhere finite. Then $E_f \cap E_g \subseteq E_h$. Moreover, if $p=1$ and $g \in C([0,T])$, then $E_f\subseteq E_h$.
\end{prop}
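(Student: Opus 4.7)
The plan is to establish the stronger assertion
\begin{equation*}
\lim_{\varepsilon\to 0^+}\frac{1}{\varepsilon}\int_{I_\varepsilon}|h(\tau)-h(t_0)|d\tau=0,
\end{equation*}
with $I_\varepsilon=(t_0-\varepsilon/2,t_0+\varepsilon/2)$, for every $t_0\in E_f\cap E_g$; this clearly gives $t_0\in E_h$. The fact that $h\in L^1(0,T)$ is immediate from H\"older's inequality. The algebraic core is the splitting
\begin{equation*}
h(\tau)-h(t_0)=g(\tau)(f(\tau)-f(t_0))+f(t_0)(g(\tau)-g(t_0)),
\end{equation*}
which reduces everything to estimating two pieces by means of H\"older's inequality together with the preceding $L^p$-Lebesgue-point proposition.

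For the generic case $1<p<\infty$ I would apply H\"older's inequality with conjugate exponents $p,q$ to the first piece, obtaining
\begin{equation*}
\frac{1}{\varepsilon}\int_{I_\varepsilon}|g(\tau)||f(\tau)-f(t_0)|d\tau\le\left(\frac{1}{\varepsilon}\int_{I_\varepsilon}|g(\tau)|^q d\tau\right)^{1/q}\left(\frac{1}{\varepsilon}\int_{I_\varepsilon}|f(\tau)-f(t_0)|^p d\tau\right)^{1/p}.
\end{equation*}
The preceding proposition applied at $t_0\in E_g$ with exponent $q$ keeps the first factor bounded, while the same result applied at $t_0\in E_f$ with exponent $p$ drives the second factor to zero. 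The remaining piece $|f(t_0)|\varepsilon^{-1}\int_{I_\varepsilon}|g(\tau)-g(t_0)|d\tau$ vanishes by applying the preceding proposition to $g$ with exponent one, which is legitimate since $g\in L^q(0,T)\subset L^1(0,T)$. The main obstacle will be the endpoint cases $p\in\{1,\infty\}$, where one of the conjugate exponents is infinite and the cited proposition is not literally available at that exponent. For $p=1$ I would retain the same splitting but estimate the $g$-factor by $\|g\|_{L^\infty}$ pointwise, reducing the first piece to $\|g\|_{L^\infty}\varepsilon^{-1}\int_{I_\varepsilon}|f-f(t_0)|d\tau\to 0$ via the $L^1$-strong Lebesgue property for $f$; the symmetric case $p=\infty$ is handled by swapping the roles of $f$ and $g$ (equivalently, by using the mirror splitting).

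For the second statement, take $p=1$, $g\in C([0,T])$ and $t_0\in E_f$. The same decomposition applies: the first piece is again bounded by $\|g\|_{L^\infty}\varepsilon^{-1}\int_{I_\varepsilon}|f-f(t_0)|d\tau\to 0$, while the second piece is controlled by continuity of $g$ at $t_0$ through $|f(t_0)|\sup_{\tau\in I_\varepsilon}|g(\tau)-g(t_0)|\to 0$. Since continuity replaces the Lebesgue-point hypothesis on $g$ in this step, one obtains the stronger inclusion $E_f\subseteq E_h$, completing the argument.
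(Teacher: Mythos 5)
Your argument is correct and follows essentially the same route as the paper: the same product--difference splitting (with the roles of $f$ and $g$ interchanged), H\"older's inequality applied to the averaged integrals, and the $L^p$-Lebesgue-point convergence proposition, with the continuous case reduced to the observation that continuity controls the $g$-increment directly. If anything, your explicit treatment of the endpoint cases $p\in\{1,\infty\}$ is slightly more careful than the paper's proof, which applies the H\"older estimate uniformly in $p$.
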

\begin{proof}
	Consider $t \in E_f \cap E_g$ and observe that, by H\"older's inequality, it holds, for any closed interval $I$ containing $t$,
	\begin{align}\label{Lebest}
		\begin{split}
		\left|\frac{1}{|I|}\int_I h(\tau)d\tau-h(t)\right|&\le \frac{1}{|I|}\int_I|h(\tau)-h(t)|d\tau\\
		&\le \frac{1}{|I|}\int_I|f(\tau)||g(\tau)-g(t)|d\tau+\frac{1}{|I|}\int_T|g(t)||f(\tau)-f(t)|d\tau\\
		&\le \left(\frac{1}{|I|}\int_I |f(\tau)|^pd\tau\right)^{\frac{1}{p}}\left(\frac{1}{|I|}\int_I|g(\tau)-g(t)|^qd\tau\right)^\frac{1}{q} \\&\qquad +|g(t)|\left(\frac{1}{|I|}\int_I|f(\tau)-f(t)|^pd\tau\right)^{\frac{1}{p}}.
	\end{split}
	\end{align}
	Being $|g(t)|<\infty$ and $|f(t)|<\infty$, taking the limit as ${\rm diam}(I)\to 0$ in Equation \eqref{Lebest}, it holds 
	\begin{align*}
		\begin{split}
			\lim_{\substack{\rm{diam}(I) \to 0 \\ I \in \cI(t)}}\left|\frac{1}{|I|}\int_I h(\tau)d\tau-h(t)\right|=0,
		\end{split}
	\end{align*}
so that $t \in E_h$.\\
Concerning the second part of the statement, just observe that if $g$ is continuous, $E_g=[0,T]$ by the integral mean value theorem.
\end{proof}
\section{Lower semicontinuity of the functional $\cF$: Proof of Proposition \ref{prop:semicontF}}\label{App:B}
\begin{proof}
First, let us show that $\cF_\Psi$ is lower semicontinuous in any $y \in L^1(0,T)$. Hence, let us consider $y_n \to y$ in $L^1$: we want to show that
\begin{equation*}
	\liminf_{n \to +\infty}\cF_\Psi[y_n]\ge \cF_\Psi[y].
\end{equation*}
Without loss of generality, we can consider a non-relabelled subsequence $y_n$ that realizes the limit inferior. Let us first consider the case $\cF_\Psi[y]<+\infty$. Let $y_{n_k}$ be a subsequence of $y_n$ that converges almost everywhere to $y$. By Egorov's theorem (see \cite[Theorem $1.2.3$]{evans2015measure}) we know that for any $\delta>0$ there exists a compact set $H$ such that $y_{n_k} \to y$ uniformly on $H$ and $|[0,T]\setminus H|<\delta$. Moreover, let us define the measure $\mu$ on $[0,T]$ such that for any Lebesgue-measurable set $A \subseteq [0,T]$ it holds
\begin{equation*}
	\mu(A)=\int_{A}\Psi(y(t))dt,
\end{equation*}
that is to say the measure $\mu$ is defined via $\frac{d \mu}{dt}=\Psi(y(t))$.  In particular, $\mu$ is absolutely continuous with respect to the Lebesgue measure. Fix $\varepsilon>0$. By absolute continuity there exists $\delta>0$ such that for any measurable set $A \subseteq [0,T]$, $|A|<\delta$ implies $\mu(A)<\varepsilon$. Let us consider the compact set $H$ obtained by Egorov's theorem such that $|[0,T]\setminus H|<\delta$. In particular, we get
\begin{equation*}
\cF_\Psi[y]=\mu(H)+\mu([0,T]\setminus H)<\mu(H)+\varepsilon.
\end{equation*}
Being $\cF_\Psi[y]<+\infty$ we have that
\begin{equation*}
	\mu(H)>\cF_\Psi[y]-\varepsilon.
\end{equation*}
On the other hand, it also holds
\begin{equation*}
	\cF_\Psi[y_{n_k}]=\int_{0}^{T}\Psi(y_n(t))dt\ge \int_{H} \Psi(y_{n_k}(t))dt.
\end{equation*}
Since $y_{n_k}\to y$ uniformly on $H$ and $\Psi$ is continuous, we also have $\Psi(y_{n_k})\to \Psi(y_{n})$ uniformly on $H$ and, taking the limit, we achieve
\begin{equation*}
	\liminf_{n\to +\infty}\cF_\Psi[y_{n}]=\lim_{k \to +\infty}\cF_\Psi[y_{n_k}]\ge \int_{H} \Psi(y(t))dt=\mu(H)>\cF_\Psi[y]-\varepsilon.
\end{equation*}
Being $\varepsilon>0$ arbitrary, we conclude the proof in the case $\cF_\Psi[y]<+\infty$.\\
If $\cF_\Psi[y]=+\infty$, let us consider the sequence of measurable sets $S_m=\{t \in [0,T]: \ \Psi(y(t))\le m\}$ for $m \in \N$, so that
\begin{equation*}
	\int_{S_m}\Psi(y(t))dt\le mT<+\infty.
\end{equation*}
However, by monotone convergence theorem
\begin{equation*}
	\lim_{m\to +\infty}\int_{S_m}\Psi(y(t))dt=\cF_\Psi[y]=+\infty.
\end{equation*}
Thus, for any $M>0$ there exists $m \in \N$ such that
\begin{equation*}
	M<\int_{S_m}\Psi(y(t))dt<+\infty.
\end{equation*}
For any Lebesgue-measurable set $A \subset [0,T]$ define $\sigma_m(A)=|A \cap S_m|$ and
\begin{equation*}
	\mu_m(A)=\int_{A \cap S_m}\Psi(y(t))dt
\end{equation*}
that are two positive measures with $\mu_m \ll \sigma_m$ and $\sigma_m([0,T])\le T<+\infty$. We can argue as before, applying Egorov's theorem to $\sigma_m$, to achieve
\begin{equation*}
	\liminf_{n \to +\infty}\int_{S_m}\Psi(y_n(t))dt\ge \int_{S_m}\Psi(y(t))dt-\varepsilon>M-\varepsilon
\end{equation*}
for any $\varepsilon>0$. On the other hand
\begin{equation*}
	\cF_\Psi[y_n]\ge \int_{S_m}\Psi(y_n(t))dt
\end{equation*}
and then
\begin{equation*}
	\liminf_{n \to +\infty}\cF_\Psi[y_n]>M-\varepsilon
\end{equation*}
for any $\varepsilon>0$. Being $\varepsilon>0$ arbitrary, we get
\begin{equation*}
	\liminf_{n \to +\infty}\cF_\Psi[y_n]\ge M.
\end{equation*}
that leads to
\begin{equation*}
	\liminf_{n \to +\infty}\cF_\Psi[y_n]=+\infty=\cF[y].
\end{equation*}
Finally, it is well-known that convex lower semicontinuous functions are also weakly lower semicontinuous, as a consequence of Mazur's theorem (see \cite{dacorogna2007direct}).
\end{proof}
\section*{Acknowledgements}
\vspace*{-0.3cm}
This research is partially supported by MIUR - PRIN 2017, project Stochastic Models for Complex Systems, no. 2017JFFHSH and by Gruppo Nazionale per l'Analisi Matematica, la Probabilit\`a e le loro Applicazioni
(GNAMPA-INdAM).

\bibliographystyle{abbrv}
\bibliography{biblio}
\end{document}